\numberwithin{equation}{section}
\providecommand{\U}[1]{\protect \rule{.1in}{.1in}}
\newtheorem{theorem}{Theorem}[section]
\newtheorem{definition}[theorem]{Definition}
\newtheorem{lemma}[theorem]{Lemma}
\newtheorem{proposition}[theorem]{Proposition}
\newtheorem{remark}[theorem]{Remark}
\newtheorem{assumption}[theorem]{Assumption}
\theoremstyle{empty}
\newenvironment{proof}[1][Proof]{\noindent \textbf{#1.} }{\  \rule{0.5em}{0.5em}}
\begin{document}
\title{Quadratic BSDEs with double constraints driven by
$G$-Brownian motion}
\author{Wei He\thanks{Research Center for Mathematics 
and Interdisciplinary Sciences, Frontiers Science 
Center for Nonlinear Expectations (Ministry of Education), 
Shandong University, Qingdao, China (hew@sdu.edu.cn).
Research supported by China Postdoctoral Science Foundation
(2024M761781), the Natural Science Foundation of Shandong Province (ZR2024QA186), 
the Fundamental Research Funds for the Central Universities.}
\and
Qiangjun Tang \thanks{Zhongtai Securities Institute for Financial 
Studies, Shandong University, Jinan, China (tangqj@mail.sdu.edu.cn).}}
\date{}
\maketitle

\begin{abstract}
    In this paper, we investigate the well-posedness of 
    quadratic backward stochastic differential equations 
    driven by $G$-Brownian motion (referred to as G-BSDEs) with double mean reflections. 
    By employing a representation of the solution via $G$-BMO martingale techniques, 
    along with fixed point arguments, the Skorokhod problem, the backward Skorokhod problem, 
    and the $\theta$-method, we establish existence and uniqueness results for such $G$-BSDEs 
    under both bounded and unbounded terminal conditions. 
\end{abstract}

{\textbf{Key words}}: $G$-Brownian motion; {$G$-BSDE; mean reflection; 
quadratic generators}

\textbf{MSC-classification}: 60H10, 60H30

\section{Introduction}\label{sec1}
The study of backward stochastic differential equations (BSDEs) was initiated by Pardoux and Peng \cite{PP90}, 
who established the fundamental existence and uniqueness results under Lipschitz conditions. 
Since then, BSDEs have found widespread applications in mathematical finance, stochastic control, 
and partial differential equations (PDEs). A particularly critical extension is the calss of quadratic BSDEs, 
where the generator grows quadratically in the second variable $Z$. 
Such equations naturally arise in problems involving utility optimization, risk-sensitive control, and indifference pricing.  
Early breakthroughs in this area include Kobylanski \cite{Kobylanski2000}, who first proved well-posedness 
for bounded terminal conditions, followed by Briand and Hu \cite{BH06,BH08}, who extended these results 
to unbounded terminal conditions.

Reflected BSDEs, introduced by El Karoui et al. \cite{EKPPQ97}, incorporate constraints on the 
solution process $Y$: specially, $Y_t\geq L_t$ for all $t\in[0,T]$, where $L$ is a given continuous process. 
Subsequent works (see \cite{CM08,GIOOQ17,HL00,KT12,KT13,KLQT02,PX05} and references therein) generalized 
the theory of reflected BSDEs further.
A more advanced generalization, mean-reflected BSDEs, imposes constraints in an average sense:
for a given loss function $L$, the condition $\mathbb{E}[L(t,Y_t)] \geq 0 $ must hold. 
This formulation, studied by Briand et al. \cite{BEH18}, 
is particularly useful in risk management and robust pricing, where pathwise constraints  
are too restrictive. Additionally, Falkowski and Słomiński \cite{FS22} focused on the mean-reflection with two 
constraints, i.e., $\mathbb{E}[L(t,Y_t)]\in[l_t,r_t]$.
Later, Li \cite{Li24} developed the 
theory of double mean reflections, requiring the solution to satisfy 
$\mathbb{E}[L(t,Y_t)]\leq0\leq\mathbb{E}[R(t,Y_t)]$ (where $L,R$ are two given nonlinear loss functions with $L\leq R,$)
and introduced the backward Skorokhod problem as a key tool to construct solutions.
We also paid special attention to that Li and Shi \cite{LS25} analyzed the mean field backward stochastic 
differential equations with double mean reflections (whose generator and constraints both 
depend on the distribution of the solution) under both Lipschitz and quadratic conditions. 

The framework of $G$-expectation, proposed by Peng \cite{P07a,P08a,P19}, provids a mathematical foundation 
for modeling uncertainty in volatility.
Within this framework, backward stochastic differential equations 
by $G$-Brownian motion ($G$-BSDEs) have been extensively studied.
Owing to the nonlinear structure, a typical $G$-BSDEs reads
\begin{small}
    \begin{equation*}
        Y_{t} =\xi
        +\int_{t}^{T}f(s,Y_{s},Z_{s})ds+\sum_{i,j=1}^{d}\int_{t}^{T}g^{ij}(s,Y_{s},Z_{s})d\langle
        B^i,B^j\rangle _{s} 
        -\int_{t}^{T}Z_{s}dB_{s}-(K_{T}-K_{t}).
    \end{equation*}
\end{small}Hu et al. \cite{HJPS1} first established the well-posedness of $G$-BSDEs under Lipschitz conditions. 
Since then, many works have been devoted to studying the existence and uniqueness of solutions 
under various non-Lipschitz conditions and the related applications, which can be found in 
\cite{HQW20,Liu20,WZ21,He22,Zhao24}. 
Notably, Hu et al. \cite{HLS18} studied quadratic $G$-BSDEs with bounded terminal condition, 
and later, Hu et al. \cite{HTW22} used the $\theta$-method to approximate the solution of quadratic
G-BSDEs for generators with convexity (or concavity) and terminal value of
exponential moments.
 
%while Hu et al. (2018), Hu et al. (2018) and Hu et al. (2022) extended the theory to 
%quadratic $G$-BSDEs, using $G$-BMO martingale techniques and approximation methods.  

For reflected G-BSDEs, cases involving upper obstacles, lower obstacles, and double obstacles were 
examined under Lipschitz conditions in \cite{LPS18,LP20,LS21}, respectively.  
Cao and Tang \cite{CT20} studied reflected quadratic $G$-BSDEs with the generator growing
quadratically in the second unknown by the penalty method.
Although the development of mean-reflected G-BSDEs is still in its early stages, 
substantial theoretical results have been achieved.
Liu and Wang \cite{LW19} were the first to establish the well-posedness of mean-reflected $G$-BSDEs for two
special kinds of generators under Lipschitz conditions. 
He \cite{H24} extended the results in \cite{LW19} to a multi-dimensional and time-varying
non-Lipschitz settings, covering the general Lipschitz case.

The combination of quadratic growth and mean reflections in the $G$-framework presents 
new challenges due to the interplay between nonlinear expectations and reflection constraints.
Gu et al. \cite{GuLinXu23} addressed quadratic $G$-BSDEs with mean reflection by combining $G$-BMO martingale techniques, 
fixed-point arguments, and the $\theta$-method. Recently, He and Li \cite{HL24} investigated 
mean-reflected $G$-BSDEs with two nonlinear reflecting boundaries, expressed as:
\begin{small}
    \begin{equation*}
        \begin{cases}
            Y_t=\xi+\int_t^T f(s,Y_s,Z_s)ds+\int_t^T g(s,Y_s,Z_s)d\langle B\rangle_s-\int_t^T Z_s dB_s-(K_T-K_t)+(A_T-A_t), \\
            \widehat{\mathbb{E}}[L(t,Y_t)]\leq 0\leq \widehat{\mathbb{E}}[R(t,Y_t)], \\
            A_t=A^R_t-A^L_t \textrm{ and } \int_0^T \widehat{\mathbb{E}}[R(t,Y_t)]dA_t^R=\int_0^T \widehat{\mathbb{E}}[L(t,Y_t)]dA^L_t=0,
        \end{cases}
    \end{equation*}
\end{small}where $K$ is a non-increasing $G$-martingale and loss functions $L,R$ satisfy certain regularity conditions.
The solution to this kind of constrained $G$-BSDE is a quadruple
of processes $(Y,Z,K,A)$ with $A$ being the difference between two nondecreasing deterministic functions
$A^R, A^L,$ each satisfy the Skorokhod condition. They established the well-posedness 
of the above mean reflected $G$-BSDE under Lipschitz condition and so-called $\beta$-order Mao's condition. 
However, the case of double mean reflections with quadratic generators remained unresolved. 

This paper bridges this gap by establishing the well-posedness of quadratic $G$-BSDEs 
with double mean reflections, covering both bounded and unbounded terminal conditions. 
Our methodology integrates:  
\begin{enumerate}
    \item $G$-BMO martingale techniques: to handle the quadratic growth of the generator.
    \item Fixed-point arguments and Skorokhod problem methods: to address the reflection constraints.
    \item The $\theta$-method: for unbounded terminal conditions, requiring convexity or concavity of the generator.
\end{enumerate}

For bounded terminal conditions, we observe that the non-decreasing 
deterministic process $A$ can be represented via the solution of a standard quadratic $G$-BSDE,
a technique previously employed in settings such as those studied in \cite{BEH18} and \cite{HLS18}.
This allows us to establish well-posedness under the simplified assumption that the generator does not 
depend on $Y$. Specifically, to prove uniqueness, we use a comparison argument (via Theorem 5.1 in \cite{CT20}) 
to show that the difference between two solutions leads to identical processes; the uniqueness of 
the deterministic reflection process $A$ is proved by a Skorokhod condition argument similar to Proposition 3.4 in \cite{HL24}.
For existence, we construct a deterministic process $A$ via the backward Skorokhod problem,
then define the solution as the sum of $A$ and the solution to a standard quadratic G-BSDE (without reflection),
and verify that the Skorokhod conditions are satisfied. 
We then extend this to the general case using a fixed-point argument: first, we define a mapping 
$\Gamma:S_G^\infty\to S_G^\infty$, and use Girsanov transformation under a new $G$-expectation 
$\tilde{\mathbb{E}}$ to linearize the difference between two solutions; next, we show that 
$\Gamma$ is a contraction on small time intervals (using estimates from the Skorokhod problem and 
$G$-BMO martingale theory) to obtain a local solution; finally, we carefully stitch these 
local solutions into a global solution.

For unbounded terminal conditions, we first prove uniqueness and existence on small intervals, 
then extend local well-posedness to the global case. For local uniqueness, we first define scaled differences 
between two solutions by $\theta$-method, and use exponential bounds for $G$-BSDE (Lemma \ref{lemma-priori-estimate}) 
and Doob's maximal inequality under $G$-framework to derive the exponential moment estimates; 
then, we show that for sufficiently small interval, the scaled differences converge to $0$ as $\theta\to 1$ 
(implying the two solutions coincide). To establish local existence, we first define a sequence of quadratic $G$-BSDEs 
with double mean reflections by employing the truncation method and provide a representation for the solution
to a sequence of quadratic $G$-BSDEs by backward Skorokhod problem; then, we prove that the sequence of solutions
is a Cauchy sequence when time interval is sufficiently small by applying the $\theta$-method and some uniform estimates;
next, we show the convergence of the other components by $G$-It\^{o} formula, the uniform estimates; 
finally, we verify the limit to the sequence of solutions satisfies the original quadratic $G$-BSDEs 
with double mean reflections.
%and check the Skorokhod conditions 
%holds by similar analysis as Step 4 of Theorem 4.4 in \cite{LS25}.
   
Our results unify and generalize earlier work on single mean-reflected $G$-BSDEs to double case, 
providing a more robust framework for problems under volatility ambiguity. 
It reveals that volatility uncertainty not only complicates the analysis but also introduces a 
richer structure for connecting the reflection process to quadratic $G$-BSDEs—a feature absent in classical settings.
Potential applications include robust pricing, ambiguity-sensitive risk measures, and recursive utility optimization 
in incomplete markets, particularly when traditional Lipschitz or linear-quadratic assumptions are inadequate. 
However, our reliance on convexity/concavity and exponential moment conditions may limit applicability in 
certain pathological cases, suggesting directions for future research to relax these assumptions.

The remainder of the paper is organized as follows: Section \ref{sec2} reviews preliminary concepts, 
including $G$-expectation theory, $G$-BMO martingales, and the Skorokhod problem. 
Section \ref{sec3} presents our main results, divided into the cases of bounded and unbounded terminal conditions.
Some detailed technical proofs are presented in Appendix.
%Finally, we conclude with applications and potential extensions of our work.

\section{Preliminaries}\label{sec2} 
In this section, we introduce basic concepts and foundational results
that will be required for our analysis, including $G$-expectation theory, 
$G$-BMO martingale theory 
and the backward Skorokhod problem. For more details, readers may refer to 
\cite{CT20,DHP11,HJPS1,HLS18,HTW22,Li23,Li24,P07a,P08a,P19}.

\subsection{$G$-expectation theory}
The purpose of this part is to recall some notations and results of $G$-expectation,
$G$-Brownian motion and related $G$-stochastic calculus. 
More details can be found in \cite{P07a,P08a,P19}. 
For simplicity, we only consider the $1$-dimensional $G$-Brownian motion. 
The results still hold for the multi-dimensional case.

Let $\Omega_T=C_{0}([0,T];\mathbb{R})$ denote the space of
real-valued continuous functions with $\omega_0=0$, equipped
with the supremum norm and 
let $B$ be the canonical
process. Define
\begin{small}
    \begin{align*}
        L_{ip} (\Omega_T):=\{ \varphi(B_{t_{1}},...,B_{t_{n}}):  \ n\in\mathbb {N}, \ t_{1}
        ,\cdots, t_{n}\in[0,T], \ \varphi\in C_{b,Lip}(\mathbb{R}^{ n})\},
    \end{align*}
\end{small}where $C_{b,Lip}(\mathbb{R}^{ n})$ consists of bounded 
Lipschitz functions on $\mathbb{R}^{n}$.

We fix a sublinear, continuous and monotone function  
$G:\mathbb{R}\rightarrow\mathbb{R}$ defined by
\begin{small}
    \begin{align*}
        G(a):=\frac{1}{2}(\bar{\sigma}^2a^+-\underline{\sigma}^2a^-),
    \end{align*}
\end{small}where $0\leq \underline{\sigma}^2<\bar{\sigma}^2$. 
The related $G$-expectation on $(\Omega,L_{ip}(\Omega_T))$ is 
constructed as follows: 
for $\xi\in L_{ip}(\Omega_T)$ represented as
$\xi=\varphi(B_{{t_1}}, B_{t_2},\cdots,B_{t_n})$,
the conditional $G$-expectation is 
\begin{small}
    \begin{align*}
        \widehat{\mathbb{E}}_{t}[\varphi(B_{{t_1}}, B_{t_2},\cdots,B_{t_n})]=u_k(t, B_t;B_{t_1},\cdots,B_{t_{k-1}}),
    \end{align*}
\end{small}where $u_k(t,x;x_1,\cdots,x_{k-1})$  
solves the following fully nonlinear PDE:
\begin{small}
    \begin{align*}
        \partial_t u_k+G(\partial_x^2 u_k)=0
    \end{align*}
\end{small}on $[t_{k-1},t_k)\times\mathbb{R}$ with terminal conditions
\begin{small}
    \begin{align*}
        u_k(t_k,x;x_1,\cdots,x_{k-1})=u_{k+1}(t_k,x;x_1,\cdots,x_{k-1},x), \ k<n
    \end{align*}
\end{small}and $u_n(t_n,x;x_1,\cdots,x_{n-1})=\varphi(x_1,\cdots,x_{n-1},x)$. 
The $G$-expectation of $\xi$ is $\widehat{\mathbb{E}}_0[\xi]$, where we omit the subscript $0$
for simplicity. 
The triple $(\Omega,L_{ip}(\Omega_T),\widehat{\mathbb{E}})$ is called the $G$-expectation space.

For each $p\geq1$, let $L_{G}^{p}(\Omega_T)$
be the completion of $L_{ip} (\Omega_T)$ under the norm 
$\Vert\cdot\Vert_{L_{G}^{p}}:=(\widehat{\mathbb{E}}[|\cdot|^{p}])^{1/p}$. 
The space $L_G^\infty(\Omega_T)$ is defined as the  
Banach completion of $L_{ip}(\Omega_T)$ under the norm
$\|\xi\|_{L_G^\infty}:=\inf\{M\geq 0:|\xi|\leq M,~\text{q.s.}\}.$
The conditional $G$-expectation $\mathbb{\hat{E}}_{t}[\cdot]$ can be
extended continuously to $L_{G}^{p}(\Omega_T)$.

A $G$-expectation 
has the following representation which was proved by Denis, Hu and Peng in \cite{DHP11}.
\begin{theorem}[\cite{DHP11}]\label{thm2.1}
	There exists a weakly compact set
	$\mathcal{P}$ of probability
	measures on $(\Omega_T,\mathcal{B}(\Omega_T))$, such that
    \begin{small}
        \begin{equation*}
            \widehat{\mathbb{E}}[\xi]=\sup_{P\in\mathcal{P}}E_{P}[\xi] 
            \text{ for all } \xi\in  {L}_{G}^{1}{(\Omega_T)}.
        \end{equation*}
    \end{small}$\mathcal{P}$ is called a set that represents $\widehat{\mathbb{E}}$.
\end{theorem}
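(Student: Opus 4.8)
The plan is to produce the representing family $\mathcal{P}$ explicitly through a stochastic control construction and then to identify $\widehat{\mathbb{E}}$ with the upper expectation it induces. First I would fix an auxiliary classical probability space $(\Omega^0,\mathcal{F}^0,P_0)$ carrying a standard Brownian motion $W$, and for every $\mathcal{F}^W$-adapted process $\theta$ taking values in $[\underline{\sigma},\bar{\sigma}]$ define the martingale $B^\theta_t=\int_0^t \theta_s\,dW_s$; let $P_\theta$ be the law of the path $(B^\theta_t)_{t\in[0,T]}$ pushed forward onto the canonical space $\Omega_T$. The candidate set is $\mathcal{P}:=\overline{\{P_\theta:\theta\in\mathcal{A}\}}$, the closure taken in the topology of weak convergence, where $\mathcal{A}$ denotes the collection of such volatility controls.

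Second, I would establish the representation on cylinder functionals. For $\xi=\varphi(B_{t_1},\dots,B_{t_n})\in L_{ip}(\Omega_T)$, the quantity $\widehat{\mathbb{E}}[\xi]$ is defined through the nested system of fully nonlinear PDEs $\partial_t u+G(\partial_x^2 u)=0$. Writing $G(a)=\frac{1}{2}\sup_{\gamma\in[\underline{\sigma}^2,\bar{\sigma}^2]}\gamma a$, this equation is exactly the Hamilton--Jacobi--Bellman equation associated with the control problem of maximizing $E_{P_0}[\varphi(\cdots)]$ over the volatilities $\theta$. Using the dynamic programming principle together with an It\^o/verification argument, I would show that the value function of this control problem coincides with the viscosity solution $u$, and hence $\widehat{\mathbb{E}}[\xi]=\sup_{\theta\in\mathcal{A}}E_{P_\theta}[\xi]=\sup_{P\in\mathcal{P}}E_P[\xi]$ for every cylinder functional.

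Third, for weak compactness: since each $\theta$ is bounded by $\bar{\sigma}$, the martingales $B^\theta$ satisfy uniform moment and increment estimates, for instance $E_{P_\theta}[|B^\theta_t-B^\theta_s|^{2m}]\le C_m|t-s|^m$, so the family $\{P_\theta\}$ is tight by the Kolmogorov criterion, and by Prokhorov's theorem its weak closure $\mathcal{P}$ is weakly compact. Because the functional $\xi\mapsto\sup_{P\in\mathcal{P}}E_P[\xi]$ is then continuous in the $L^1_G$-norm (weak compactness lets one interchange the supremum with limits) and agrees with $\widehat{\mathbb{E}}$ on the dense subspace $L_{ip}(\Omega_T)$, a density argument would extend the identity to all of $L^1_G(\Omega_T)$.

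The hard part will be the second step: rigorously matching the $G$-expectation, which is defined purely through the nonlinear PDE, with the stochastic control value function. This requires a uniqueness (comparison) theorem for the possibly degenerate, when $\underline{\sigma}=0$, fully nonlinear equation in the viscosity sense, together with a careful verification argument handling the non-smoothness of $u$; the measurable selection underlying the dynamic programming principle and the passage to the weak closure (ensuring no mass is lost and that the supremum is still attained by the enlarged family) are the delicate technical points. An equivalent route would bypass the explicit control representation and instead prove directly that $\widehat{\mathbb{E}}$ is continuous from above on $C_b(\Omega_T)$, i.e. $\widehat{\mathbb{E}}[\xi_n]\downarrow 0$ whenever $\xi_n\downarrow 0$, which by a Daniell--Stone argument upgrades the Hahn--Banach family of dominated linear functionals into genuine countably additive probability measures; there the regularity estimate is again the crux.
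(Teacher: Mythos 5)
The paper does not prove this statement at all: it is imported verbatim from Denis--Hu--Peng \cite{DHP11} as a known representation theorem, so there is no internal proof to compare against. Your outline is, in substance, exactly the argument of that reference (and of Denis--Martini/Hu--Peng before it): realize the candidate measures as laws of $\int_0^\cdot\theta_s\,dW_s$ with $\theta$ valued in $[\underline{\sigma},\bar{\sigma}]$, identify $\widehat{\mathbb{E}}$ on $L_{ip}(\Omega_T)$ with the value function of the associated control problem via the HJB structure of $\partial_tu+G(\partial_x^2u)=0$, get tightness from the uniform BDG moment bounds and weak compactness of the closure from Prokhorov, and finish by density. You also correctly flag the genuinely delicate points (comparison/uniqueness for the possibly degenerate fully nonlinear PDE, the DPP with measurable selection, and that passing to the weak closure does not change the supremum because $P\mapsto E_P[\xi]$ is weakly continuous for $\xi\in C_b$). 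One small correction: the continuity of $\xi\mapsto\sup_{P\in\mathcal{P}}E_P[\xi]$ in the $L_G^1$-norm is not a consequence of weak compactness; it follows from sublinearity, namely $\bigl|\sup_PE_P[\xi]-\sup_PE_P[\eta]\bigr|\le\sup_PE_P[|\xi-\eta|]$, which on $L_{ip}(\Omega_T)$ equals $\widehat{\mathbb{E}}[|\xi-\eta|]=\|\xi-\eta\|_{L_G^1}$ by the already-established representation on cylinder functions; the extension to the completion $L_G^1(\Omega_T)$ then also requires the capacity-theoretic fact that elements of $L_G^1$ have quasi-continuous versions integrable under every $P\in\mathcal{P}$, which is the main technical content of \cite{DHP11} and is glossed over in your last step. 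With that caveat, the plan is a faithful reconstruction of the cited proof rather than an alternative route.
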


Let $\mathcal{P}$ be a weakly compact set that represents 
$\widehat{\mathbb{E}}$, we define a capacity 
\begin{small}
    \begin{equation*}
        c(A):=\sup_{P\in\mathcal{P}}P(A),\quad A\in\mathcal{B}(\Omega).
    \end{equation*}
\end{small}A set $A\in\mathcal{B}(\Omega_T)$ is called polar if $c(A)=0$.  A
property holds ``quasi-surely'' (q.s.) if it holds outside a
polar set. In this paper, we do not distinguish two random 
variables $X$ and $Y$ if $X=Y$, q.s..

The conditional $G$-expectation admits a similar representation.
\begin{theorem}[\cite{STZ12}]\label{thm2.2}
    For any $\xi\in L_G^1(\Omega_T)$, $t\in[0,T]$ and $P\in \mathcal{P}$,
    \begin{small}
        \begin{equation*}
            \widehat{\mathbb{E}}_t[\xi]=\underset{P^\prime\in \mathcal{P}(t, P)}
            {ess\sup}\ E^{ P^\prime}_t[\xi],\quad P \text{-a.s.},
        \end{equation*}
    \end{small}where $\mathcal{P}(t, P):=\{ P^\prime\in \mathcal P:  P^\prime= P~\text{on}~\mathcal F_t\}.$
\end{theorem}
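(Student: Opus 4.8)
The plan is to prove the two inequalities separately, after reducing to a dense class of test random variables. Both $\xi\mapsto\widehat{\mathbb{E}}_t[\xi]$ and $\xi\mapsto\esssup_{P'\in\mathcal{P}(t,P)}E^{P'}_t[\xi]$ are non-expansive for $\|\cdot\|_{L_G^1}$ (the former because $|\widehat{\mathbb{E}}_t[\xi]-\widehat{\mathbb{E}}_t[\eta]|\le\widehat{\mathbb{E}}_t[|\xi-\eta|]$ and $\widehat{\mathbb{E}}[\widehat{\mathbb{E}}_t[|\xi-\eta|]]=\widehat{\mathbb{E}}[|\xi-\eta|]$, the latter since $E^{P'}_t[\xi]-E^{P'}_t[\eta]\le E^{P'}_t[|\xi-\eta|]$ under each $P'$), so it suffices to prove the identity for $\xi\in L_{ip}(\Omega_T)$ and then pass to the limit in $L_G^1$.

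For the inequality $\widehat{\mathbb{E}}_t[\xi]\ge\esssup_{P'\in\mathcal{P}(t,P)}E^{P'}_t[\xi]$, I would fix $P'\in\mathcal{P}(t,P)$ and show $E^{P'}_t[\xi]\le\widehat{\mathbb{E}}_t[\xi]$, $P$-a.s. Set $A:=\{E^{P'}_t[\xi]>\widehat{\mathbb{E}}_t[\xi]\}\in\mathcal{F}_t$ and suppose $P'(A)>0$. On one hand, the definition of the conditional expectation under $P'$ gives $E_{P'}[\mathbf{1}_A\xi]=E_{P'}[\mathbf{1}_A E^{P'}_t[\xi]]>E_{P'}[\mathbf{1}_A\widehat{\mathbb{E}}_t[\xi]]$. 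On the other hand, approximating $\mathbf{1}_A$ from below by bounded nonnegative quasi-continuous $\mathcal{F}_t$-measurable functions (so that the products lie in $L_G^1$), pulling such a factor out of the conditional $G$-expectation and using the tower property yields $\widehat{\mathbb{E}}\big[\mathbf{1}_A(\xi-\widehat{\mathbb{E}}_t[\xi])\big]=\widehat{\mathbb{E}}\big[\mathbf{1}_A\,\widehat{\mathbb{E}}_t[\xi-\widehat{\mathbb{E}}_t[\xi]]\big]=0$, since $\widehat{\mathbb{E}}_t[\xi-\widehat{\mathbb{E}}_t[\xi]]=0$ by the translation property. Hence, by Theorem \ref{thm2.1}, $E_{P'}[\mathbf{1}_A(\xi-\widehat{\mathbb{E}}_t[\xi])]\le 0$, contradicting the strict inequality. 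Thus $P'(A)=0$; as $P'=P$ on $\mathcal{F}_t$ and both sides are $\mathcal{F}_t$-measurable, the inequality holds $P$-a.s., and taking the essential supremum gives this direction.

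The reverse inequality $\widehat{\mathbb{E}}_t[\xi]\le\esssup_{P'\in\mathcal{P}(t,P)}E^{P'}_t[\xi]$ is the crux, and I would exploit the stability of $\mathcal{P}$ under pasting at the deterministic time $t$: given $P$ and a suitably measurable family $(Q^\omega)_\omega\subset\mathcal{P}$, the measure obtained by following $P$ up to time $t$ and the regular conditional law of $Q^\omega$ afterwards again belongs to $\mathcal{P}$ and coincides with $P$ on $\mathcal{F}_t$. For $\xi\in L_{ip}(\Omega_T)$ the value $\widehat{\mathbb{E}}_t[\xi](\omega)$ is, by its PDE/representation construction, the sublinear expectation of the shifted random variable, so for each $\varepsilon>0$ a measurable selection argument produces $(Q^\omega)$ whose pasted measure $P^\varepsilon\in\mathcal{P}(t,P)$ satisfies $E^{P^\varepsilon}_t[\xi]\ge\widehat{\mathbb{E}}_t[\xi]-\varepsilon$, $P$-a.s.; letting $\varepsilon\downarrow0$ gives the claim. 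To realize the essential supremum I would first check that $\{E^{P'}_t[\xi]:P'\in\mathcal{P}(t,P)\}$ is upward directed—pasting, on the $\mathcal{F}_t$-event where one dominates, the larger of $P_1,P_2$ produces a third element of $\mathcal{P}(t,P)$ exceeding both—so that the essential supremum is the increasing limit of a sequence $E^{P_n}_t[\xi]$.

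I expect the main obstacle to be this reverse direction: establishing the pasting stability of the representing set $\mathcal{P}$ and carrying out the measurable selection that makes $\omega\mapsto Q^\omega$ (equivalently the optimal future volatility path) measurable, together with the directedness needed to realize the essential supremum as a monotone limit. The remaining steps—the quasi-continuous approximation of the indicators $\mathbf{1}_A$ in the first direction, and the reduction of general $\xi\in L_G^1(\Omega_T)$ to $L_{ip}(\Omega_T)$ by the $\|\cdot\|_{L_G^1}$-continuity of both sides—are routine technicalities.
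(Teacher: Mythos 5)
The paper offers no proof of this statement: it is quoted verbatim from \cite{STZ12} (and the underlying aggregation results of \cite{DHP11,Song11}), so there is nothing internal to compare against. Your outline is, in substance, the standard argument from that literature: reduce to $\xi\in L_{ip}(\Omega_T)$ by $\|\cdot\|_{L_G^1}$-contractivity of both sides, prove ``$\geq$'' by testing against $\mathcal{F}_t$-measurable indicators, and prove ``$\leq$'' by pasting measures at time $t$ together with a measurable selection of $\varepsilon$-optimal continuations. The first direction is essentially complete as you describe it (the only care needed is that $\mathbf{1}_A$ must be replaced by quasi-continuous $\mathcal{F}_t$-measurable test functions so that the product lies in $L_G^1$ and the identity $\widehat{\mathbb{E}}_t[\eta X]=\eta\,\widehat{\mathbb{E}}_t[X]$ for $\eta\geq 0$ applies; $L^1(P')$-approximation of $\mathbf{1}_A$ by such functions then closes the argument for bounded $\xi$).

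The one place where your sketch asserts rather than proves the decisive fact is the reverse inequality. Theorem \ref{thm2.1} only delivers \emph{some} weakly compact representing set $\mathcal{P}$, and an abstract representing set need not be stable under pasting at time $t$; the pasting property and the measurability of $\omega\mapsto Q^\omega$ must be verified for the concrete set constructed in \cite{DHP11} (laws of $\int\sigma\,dW$ with $\sigma$ ranging over $[\underline{\sigma},\bar{\sigma}]$-valued adapted processes, or its suitable closure), and this verification -- together with the dynamic programming identity expressing $\widehat{\mathbb{E}}_t[\xi](\omega)$ as the supremum over continuations of $P$ past time $t$ -- is precisely the technical content of \cite{STZ12}. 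Your proposal correctly isolates this as the crux but does not supply it, so as written it is an accurate roadmap rather than a proof; the directedness observation used to realize the essential supremum as a monotone limit is correct and also standard.
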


\begin{definition}\label{def2.1}
	Let $M_{G}^{0}(0,T)$ be the collection of processes of the form:
    \begin{small}
        \begin{equation*}
            \eta_{t}(\omega)=\sum_{j=0}^{N-1}\xi_{j}(\omega)\mathbf{1}_{[t_{j},t_{j+1})}(t),
        \end{equation*}
    \end{small}where $\xi_{i}\in L_{ip}(\Omega_{t_{i}})$, $i=0,1,2,\cdot\cdot\cdot,N-1$. 
    For each $p\geq1$ and $\eta\in M_G^0(0,T)$, define 
    $\small \|\eta\|_{H_G^p}:=\tiny \biggl\{\widehat{\mathbb{E}}[(\int_0^T|\eta_s|^2ds)^{p/2}]\biggr\}^{1/p}$, 
    $\small \Vert\eta\Vert_{M_{G}^{p}}:=\tiny \biggl(\widehat{\mathbb{E}}[\int_{0}^{T}|\eta_{s}|^{p}ds]\biggr)^{1/p}$.
    Denote by $H_G^p(0,T)$, $M_{G}^{p}(0,T)$ the completion
	of $M_{G}^{0}(0,T)$ under the norms 
    $\small \|\cdot\|_{H_G^p}$, $\small \|\cdot\|_{M_G^p}$, respectively.
\end{definition}

The following proposition is the 
Burkholder-Davis-Gundy (BDG) inequality under $G$-expectation framework.
\begin{proposition}[\cite{P19}]\label{prop2.3}
	If $\eta\in H_G^{\alpha}(0,T)$ with $\alpha\geq 1$ and $p\in(0,\alpha]$, then we have
    \begin{small}
        \begin{equation*}
            \underline{\sigma}^p c_p\widehat{\mathbb{E}}\biggl[\biggl(\int_0^T |\eta_s|^2ds\biggr)^{p/2}\biggr]\leq
            \widehat{\mathbb{E}}\biggl[\sup_{t\in[0,T]}\biggl|\int_0^t\eta_s dB_s\biggr|^p\biggr]\leq
            \bar{\sigma}^p C_p\widehat{\mathbb{E}}\biggl[\biggl(\int_0^T |\eta_s|^2ds\biggr)^{p/2}\biggr],
        \end{equation*}
    \end{small}where $0<c_p<C_p<\infty$ are constants depending on $p, T$.
\end{proposition}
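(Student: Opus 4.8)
The plan is to transfer the classical Burkholder--Davis--Gundy inequality to the $G$-framework by means of the representation $\widehat{\mathbb{E}}[\cdot]=\sup_{P\in\mathcal{P}}E_P[\cdot]$ furnished by Theorem \ref{thm2.1}, exploiting the structural fact that under each $P\in\mathcal{P}$ the canonical process $B$ is a continuous $P$-martingale whose quadratic variation is absolutely continuous with density confined to $[\underline{\sigma}^2,\bar{\sigma}^2]$, i.e. $\underline{\sigma}^2\,ds\le d\langle B\rangle_s\le\bar{\sigma}^2\,ds$. This volatility sandwich is precisely what converts the classical $d\langle B\rangle_s$-quadratic variation into the $ds$-integral appearing in the statement, and produces the two factors $\underline{\sigma}^p$ and $\bar{\sigma}^p$.

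First I would establish both inequalities for simple integrands $\eta\in M_G^0(0,T)$, for which $M_t:=\int_0^t\eta_s\,dB_s$ is a finite sum and is unambiguously defined. Fixing $P\in\mathcal{P}$ and using that the $G$-integral coincides $P$-a.s. with the classical It\^o integral, $M$ is a continuous $P$-martingale with $\langle M\rangle_t=\int_0^t|\eta_s|^2\,d\langle B\rangle_s$, so the classical BDG inequality gives
\begin{small}
\begin{equation*}
c_p\,E_P\!\left[\Big(\int_0^T|\eta_s|^2\,d\langle B\rangle_s\Big)^{p/2}\right]\le E_P\!\left[\sup_{t\in[0,T]}|M_t|^p\right]\le C_p\,E_P\!\left[\Big(\int_0^T|\eta_s|^2\,d\langle B\rangle_s\Big)^{p/2}\right].
\end{equation*}
\end{small}
The volatility bounds then yield, $P$-a.s.,
\begin{small}
\begin{equation*}
\underline{\sigma}^p\Big(\int_0^T|\eta_s|^2\,ds\Big)^{p/2}\le\Big(\int_0^T|\eta_s|^2\,d\langle B\rangle_s\Big)^{p/2}\le\bar{\sigma}^p\Big(\int_0^T|\eta_s|^2\,ds\Big)^{p/2}.
\end{equation*}
\end{small}

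Next I would take the supremum over $P\in\mathcal{P}$, treating the two bounds asymmetrically. For the upper bound, combining the displays gives $E_P[\sup_t|M_t|^p]\le\bar{\sigma}^pC_p\,E_P[(\int_0^T|\eta_s|^2\,ds)^{p/2}]$ for every $P$; taking $\sup_P$ on the left recovers $\widehat{\mathbb{E}}[\sup_t|M_t|^p]$ via Theorem \ref{thm2.1}, while on the right $\sup_P E_P[(\int_0^T|\eta_s|^2\,ds)^{p/2}]=\widehat{\mathbb{E}}[(\int_0^T|\eta_s|^2\,ds)^{p/2}]$. For the lower bound, from $\underline{\sigma}^pc_p\,E_P[(\int_0^T|\eta_s|^2\,ds)^{p/2}]\le E_P[\sup_t|M_t|^p]\le\widehat{\mathbb{E}}[\sup_t|M_t|^p]$ valid for each $P$, taking $\sup_P$ on the left-hand side produces the claimed inequality. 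Finally I would drop the restriction to simple integrands: for $\eta\in H_G^\alpha(0,T)$ I choose $\eta^n\in M_G^0(0,T)$ with $\eta^n\to\eta$ in $H_G^\alpha$, apply the just-proved upper bound to $\eta^n-\eta^m$ to identify the $\sup$-limit of $\int_0^\cdot\eta^n_s\,dB_s$ with $\int_0^\cdot\eta_s\,dB_s$, and pass the two-sided estimate to the limit, using $p\le\alpha$ to keep the relevant norms finite.

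The main obstacle is the legitimacy of interchanging $\sup_{P\in\mathcal{P}}$ with the pathwise estimates, together with the measurability and $L_G^1$-integrability of the running maximum $\sup_{t}|M_t|^p$ that is needed to invoke Theorem \ref{thm2.1}; this is exactly where the representation theorem and the $P$-a.s. coincidence of the $G$-integral with each classical It\^o integral do the essential work. A secondary care point is ensuring the approximating sequence converges in the correct $\sup$-norm sense so that \emph{both} sides of the inequality are stable under the limit, which is where the already-established simple-integrand inequality is re-used to furnish the requisite Cauchy property.
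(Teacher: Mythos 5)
The paper does not prove this proposition at all---it is imported verbatim from \cite{P19} as a known result---so there is no in-paper argument to compare against; your sketch is the standard derivation and is essentially correct. The route you take (classical BDG under each $P\in\mathcal{P}$, the volatility sandwich $\underline{\sigma}^2\,ds\le d\langle B\rangle_s\le\bar{\sigma}^2\,ds$, passage to $\sup_P$ via Theorem \ref{thm2.1}, and density of $M_G^0(0,T)$) is exactly how the result is established in the cited reference, and the one technical point you flag---the $L_G^1$-membership of $\sup_{t}|\int_0^t\eta_s\,dB_s|^p$ needed to invoke the representation theorem---is indeed the only real care point, resolved for simple integrands by observing that the running maximum is a continuous function of $\omega$ with linear growth in $\|\omega\|_\infty$.
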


Now we state the monotone convergence theorem of $G$-framework, which is different from
the classical case.
\begin{theorem}[\cite{DHP11}]\label{monotone-con-thm}
    Let $X_n,n\geq1$ be a sequence of $\mathcal{B}(\Omega)$-measurable functions.
    \begin{enumerate}
        \item [(i)] Suppose $X_n\geq0$. Then 
        $\widehat{\mathbb{E}}[\liminf\nolimits_{n\to\infty} X_n]\leq \liminf\nolimits_{n\to\infty}\widehat{\mathbb{E}}[X_n].$
        \item [(ii)] Suppose $X_n\in L_G^1(\Omega_T)$ be such $X_n \downarrow X$ q.s.,
        then $\widehat{\mathbb{E}}[X_n]\downarrow \widehat{\mathbb{E}}[X]$.
        In particular, if $X\in L_G^1(\Omega_T)$, then $\widehat{\mathbb{E}}[Xn-X]\downarrow 0$, as $n\to\infty$.
    \end{enumerate}
\end{theorem}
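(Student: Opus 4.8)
The plan is to derive both statements directly from the Denis--Hu--Peng representation in Theorem \ref{thm2.1}, reading $\widehat{\mathbb{E}}$ on a general $\mathcal{B}(\Omega_T)$-measurable $X$ as the upper expectation $\widehat{\mathbb{E}}[X]=\sup_{P\in\mathcal{P}}E_P[X]$, where $\mathcal{P}$ is the weakly compact representing set. Part (i) should then be essentially immediate and require no compactness: for each fixed $P\in\mathcal{P}$ the classical Fatou lemma (valid since $X_n\geq 0$) gives $E_P[\liminf_n X_n]\leq \liminf_n E_P[X_n]$, while $E_P[X_n]\leq \widehat{\mathbb{E}}[X_n]$ for every $n$ forces $\liminf_n E_P[X_n]\leq \liminf_n \widehat{\mathbb{E}}[X_n]$. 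Thus $E_P[\liminf_n X_n]\leq \liminf_n \widehat{\mathbb{E}}[X_n]$ for every $P$, and taking the supremum over $P\in\mathcal{P}$ on the left yields $\widehat{\mathbb{E}}[\liminf_n X_n]\leq \liminf_n \widehat{\mathbb{E}}[X_n]$.

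For part (ii), monotonicity of $\widehat{\mathbb{E}}$ makes $\widehat{\mathbb{E}}[X_n]$ nonincreasing with $\inf_n\widehat{\mathbb{E}}[X_n]\geq \widehat{\mathbb{E}}[X]$, so the entire content is the reverse inequality, i.e. the interchange $\inf_n\sup_{P}E_P[X_n]\leq \sup_P\inf_n E_P[X_n]$. First I would treat the right-hand side pathwise: fixing $P\in\mathcal{P}$, the sequence $X_n$ is nonincreasing and dominated above by $X_1$, which lies in $L^1(P)$ because $E_P[|X_1|]\leq \widehat{\mathbb{E}}[|X_1|]<\infty$; applying the classical monotone convergence theorem to $X_1-X_n\uparrow X_1-X$ gives $\inf_n E_P[X_n]=E_P[X]$, whence $\sup_P\inf_n E_P[X_n]=\widehat{\mathbb{E}}[X]$. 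It therefore remains only to justify the interchange of $\inf_n$ and $\sup_P$, and this is the main obstacle.

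The key auxiliary fact I would establish is that, for each $X_n\in L_G^1(\Omega_T)$, the map $P\mapsto E_P[X_n]$ is continuous on $\mathcal{P}$ in the weak topology. Choosing $\zeta_k\in L_{ip}(\Omega_T)$ with $\widehat{\mathbb{E}}[|X_n-\zeta_k|]\to 0$, one has $\sup_{P\in\mathcal{P}}|E_P[X_n]-E_P[\zeta_k]|\leq \widehat{\mathbb{E}}[|X_n-\zeta_k|]\to 0$; since each $\zeta_k$ is a bounded continuous cylinder functional, $P\mapsto E_P[\zeta_k]$ is weakly continuous, and $P\mapsto E_P[X_n]$ is a uniform limit of continuous maps, hence continuous (in particular upper semicontinuous). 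With this in hand I would run a Dini-type compactness argument: set $c:=\inf_n\sup_{P}E_P[X_n]$ and $K_n:=\{P\in\mathcal{P}:E_P[X_n]\geq c\}$. Each $K_n$ is a closed, hence compact, subset of $\mathcal{P}$; it is nonempty because the continuous map $P\mapsto E_P[X_n]$ attains its supremum $\geq c$ on the compact set $\mathcal{P}$; and the $K_n$ are nested since $X_{n+1}\leq X_n$. A nested sequence of nonempty compact sets has nonempty intersection, so there is $P^{*}\in\bigcap_n K_n$ with $\inf_n E_{P^{*}}[X_n]\geq c$, giving $\sup_P\inf_n E_P[X_n]\geq c=\inf_n\widehat{\mathbb{E}}[X_n]$. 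Together with the trivial direction this proves $\inf_n\widehat{\mathbb{E}}[X_n]=\widehat{\mathbb{E}}[X]$.

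Finally, the \emph{in particular} clause follows by applying the statement just proved to $Y_n:=X_n-X\in L_G^1(\Omega_T)$, which is nonincreasing and decreases q.s. to $0$, so that $\widehat{\mathbb{E}}[X_n-X]=\widehat{\mathbb{E}}[Y_n]\downarrow \widehat{\mathbb{E}}[0]=0$. I expect the genuinely delicate point throughout to be the passage from per-$P$ convergence to convergence of the sublinear expectation, which classically fails for decreasing sequences; it is precisely the weak compactness of $\mathcal{P}$ together with the weak continuity of $P\mapsto E_P[X_n]$ (and hence the nonemptiness of $\bigcap_n K_n$) that secures the downward convergence here, in contrast with the effortless upward/Fatou direction.
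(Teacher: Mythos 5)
The paper does not prove this theorem; it is imported verbatim from \cite{DHP11} as a known result, so there is no in-paper argument to compare against. Your proof is correct and is essentially the standard Denis--Hu--Peng argument: part (i) is the per-measure Fatou lemma followed by a supremum over $\mathcal{P}$, and part (ii) hinges exactly where you say it does, namely on the weak continuity of $P\mapsto E_P[X_n]$ for $X_n\in L_G^1(\Omega_T)$ (via uniform $L_{ip}$-approximation) combined with the weak compactness of $\mathcal{P}$, your nested-compact-sets argument being a clean packaging of the min--max interchange that makes the downward convergence work.
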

\begin{lemma}[\cite{HTW22}]\label{lemma-HTW22}
    Let $X_n\in L_G^1(\Omega_T)$ for $n\geq1$ such that 
    $\sup\nolimits_{n\geq1}\widehat{\mathbb{E}}[|X_n|^{2p}]<\infty$ for some 
    $p\geq1$. If $\widehat{\mathbb{E}}[|X_n|]$ converges to $0$ as $n\to\infty,$
    then $\lim\nolimits_{n\to\infty}\widehat{\mathbb{E}}[|X_n|^{p}]=0.$
\end{lemma}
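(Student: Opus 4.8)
\noindent The plan is to read this as the sublinear-expectation counterpart of the classical fact that convergence in $L^1$ together with uniform boundedness in a strictly higher moment forces convergence in every intermediate norm. The two hypotheses are designed to control complementary regimes of $|X_n|$: the uniform bound $M:=\sup_{n\geq1}\widehat{\mathbb{E}}[|X_n|^{2p}]<\infty$ tames the contribution of the large values of $|X_n|$, while the assumption $\widehat{\mathbb{E}}[|X_n|]\to0$ kills the contribution of the moderate values. I would realise this split by truncating at a free level $N>0$ and then optimising over $N$.

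The main step is the elementary pointwise inequality: for every $N>0$, quasi-surely,
\[
    |X_n|^p\leq N^{p-1}|X_n|+N^{-p}|X_n|^{2p}.
\]
This is verified by cases. On $\{|X_n|\leq N\}$ one has $|X_n|^p=|X_n|\,|X_n|^{p-1}\leq N^{p-1}|X_n|$ since $p\geq1$; on $\{|X_n|>N\}$ one has $1<(|X_n|/N)^p$, hence $|X_n|^p\leq N^{-p}|X_n|^{2p}$; in either case the displayed bound holds because both summands are nonnegative. Applying $\widehat{\mathbb{E}}$ and using sub-additivity and positive homogeneity of the sublinear expectation gives
\[
    \widehat{\mathbb{E}}[|X_n|^p]\leq N^{p-1}\,\widehat{\mathbb{E}}[|X_n|]+N^{-p}\,\widehat{\mathbb{E}}[|X_n|^{2p}]\leq N^{p-1}\,\widehat{\mathbb{E}}[|X_n|]+M\,N^{-p}.
\]
The conclusion then follows from a standard two-stage limit: given $\varepsilon>0$, first fix $N$ so large that $MN^{-p}<\varepsilon/2$, and then, with this $N$ frozen, use $\widehat{\mathbb{E}}[|X_n|]\to0$ to make $N^{p-1}\widehat{\mathbb{E}}[|X_n|]<\varepsilon/2$ for all large $n$.

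I do not expect a serious obstacle here; the only points requiring care are bookkeeping points. One must respect the order of the two limits (choose $N$ before sending $n\to\infty$, never the reverse), and one must make sure the manipulations with $\widehat{\mathbb{E}}$ are legitimate for the nonnegative functionals $|X_n|^p$ and $|X_n|^{2p}$, which need not a priori lie in $L_G^1(\Omega_T)$. The latter is settled by the representation $\widehat{\mathbb{E}}[\cdot]=\sup_{P\in\mathcal{P}}E_P[\cdot]$ of Theorem \ref{thm2.1}: monotonicity, sub-additivity and positive homogeneity of the upper expectation are inherited from the corresponding classical properties holding under each $P\in\mathcal{P}$, and the bound above shows each quantity has finite upper expectation. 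As a slicker alternative, a single application of the $G$-Hölder inequality with interpolation weight $\theta=p/(2p-1)\in(0,1]$ yields directly $\widehat{\mathbb{E}}[|X_n|^p]\leq\big(\widehat{\mathbb{E}}[|X_n|]\big)^{\theta}\big(\widehat{\mathbb{E}}[|X_n|^{2p}]\big)^{1-\theta}\leq M^{1-\theta}\big(\widehat{\mathbb{E}}[|X_n|]\big)^{\theta}\to0$, bypassing the truncation entirely.
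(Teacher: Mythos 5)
Your argument is correct; note that the paper itself gives no proof of this lemma, citing it directly from \cite{HTW22}, where it is established by exactly the interpolation you describe in your final remark (a H\"older/Cauchy--Schwarz bound of $\widehat{\mathbb{E}}[|X_n|^{p}]$ by a power of $\widehat{\mathbb{E}}[|X_n|]$ times a power of the uniformly bounded higher moment). Both your truncation version and the one-line H\"older version are valid under the sublinear expectation, since monotonicity, sub-additivity, positive homogeneity and H\"older's inequality all transfer from the representation $\widehat{\mathbb{E}}=\sup_{P\in\mathcal{P}}E_P$, so there is nothing to correct.
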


The following theorem is a version of the Doob's maximal inequality under $G$-framework.
More details could be found in \cite{P10,STZ12,Song11}.
%\begin{theorem}[\cite{LuSong21}]\label{Doob-inequality}
%    Let $1 \le \alpha < \alpha^\prime$. Then, for all $\xi\in L_G^{\alpha^\prime}(\Omega_T)$,
%    we can find a constant  $C=:C(\alpha,\alpha^\prime)>0$ such that
%    \begin{small}
%        \begin{equation*}
%            \mathbb{\widehat{E}}\Biggl[\sup_{t\in [0,T]}\widehat{\mathbb{E}}_{t}[|\xi|^{\alpha
%            }]\Biggr]\le C \biggl(\mathbb{\widehat{E}}\Bigl[|\xi|^{\alpha^\prime}\Bigr]\biggr)^{\frac{\alpha}{\alpha^\prime}}.
%        \end{equation*}
%    \end{small}
%    For any $\alpha \geq 1$ and $\delta>0$, we have 
%    $L_{G}^{\alpha+\delta}(\Omega_{T})\subset L_{\mathcal{E}}^{\alpha}(\Omega _{T}).$
%    More precisely,  for any $1 < \gamma<\beta :=(\alpha+\delta)/\alpha,\gamma\le 2$ 
%    and for all $\xi\in L_{ip}(\Omega_T)$, we have
%    \begin{equation*}
%        \mathbb{\widehat{E}}\Biggl[\sup_{t\in [0,T]}\widehat{\mathbb{E}}_{t}[|\xi|^{\alpha
%        }]\Biggr]\le C\Biggl\{\biggl(\widehat{\mathbb{E}}\Big[|\xi|^{\alpha+\delta}\Big]\biggr)^{\alpha/(\alpha+\delta
%        )}+\widehat{\mathbb{E}}\Big[|\xi|^{\alpha+\delta}\Big]\Biggr\}.
%    \end{equation*}
%\end{theorem}
\begin{theorem}\label{Doob-inequality}
    Let $1 \le \alpha < \beta$. Then for each $1<p<\bar{p}:=\beta/\alpha$ 
    with $p\leq 2$ and for all $\xi\in L_G^\beta(\Omega_T)$,
    we can find a constant $C$ depending only on $p,$ $\bar{\sigma}$ and $\underline{\sigma}$ such that
    \begin{small}
        \begin{equation*}
            \widehat{\mathbb{E}}\Biggl[\sup_{t\in [0,T]}\widehat{\mathbb{E}}_{t}[|\xi|^{\alpha
            }]\Biggr]\leq \frac{Cp\bar{p}}{(\bar{p}-p)(p-1)} 
            \biggl(\widehat{\mathbb{E}}\Bigl[|\xi|^\beta\Bigr]^{\frac{1}{p\bar{p}}}+\widehat{\mathbb{E}}\Bigl[|\xi|^\beta\Bigr]^{\frac{1}{p}}\biggr).
        \end{equation*}
    \end{small}
\end{theorem}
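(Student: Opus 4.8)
The plan is to regard $M_t:=\widehat{\mathbb{E}}_t[|\xi|^\alpha]$ as a $G$-martingale and to estimate its running maximum $M^\ast:=\sup_{t\in[0,T]}M_t$ in $L_G^1$, the auxiliary exponent $p$ serving to lift the too-low integrability of $|\xi|^\alpha$. First I would record two Jensen-type reductions that are immediate from the representation in Theorem \ref{thm2.1}. The conditional Jensen inequality for the $G$-expectation (valid for the convex map $x\mapsto x^p$, see \cite{P19}) gives, quasi-surely, $M_t\le(\widehat{\mathbb{E}}_t[|\xi|^{\alpha p}])^{1/p}=:N_t^{1/p}$, where $N_t:=\widehat{\mathbb{E}}_t[|\xi|^{\alpha p}]$ is again a $G$-martingale with terminal value $N_T=|\xi|^{\alpha p}$; the hypothesis $p<\bar{p}=\beta/\alpha$ ensures $\alpha p<\beta$, so $|\xi|^{\alpha p}\in L_G^{\beta/(\alpha p)}(\Omega_T)$ and the maximal function of $N$ is integrable. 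Since $x\mapsto x^{1/p}$ is increasing, $M^\ast\le(N^\ast)^{1/p}$ with $N^\ast:=\sup_t N_t$, so it suffices to bound $\widehat{\mathbb{E}}[(N^\ast)^{1/p}]$.

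The core of the proof is a weak-type maximal estimate for the $G$-martingale $N$, and this is the step I expect to be the main obstacle. By the representation of the conditional $G$-expectation (Theorem \ref{thm2.2}), $N_t=\esssup_{P'\in\mathcal{P}(t,P)}E^{P'}_t[|\xi|^{\alpha p}]$ $P$-a.s., so for each fixed $P'$ the process $t\mapsto E^{P'}_t[|\xi|^{\alpha p}]$ is an honest $P'$-martingale to which the classical Doob maximal (weak-type) inequality applies. In the classical setting this would end the argument, but here the quantity to be controlled is $c(N^\ast>\lambda)=\sup_{P\in\mathcal{P}}P(N^\ast>\lambda)$, and passing from the per-measure martingale bounds to this nonlinear supremum requires interchanging the running maximum in $t$ with the essential supremum over $\mathcal{P}(t,P)$, whose optimiser depends on both the time and the terminal variable. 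I would handle this exactly as in \cite{STZ12,Song11,P10}: using that $\mathcal{P}$ is stable under conditional pasting (the same time-consistency that underlies Theorem \ref{thm2.2}) together with a stopping-time argument applied to the hitting times of the levels $\{N_t>\lambda\}$, and controlling the stochastic-integral part of $N$ by the $G$-BDG inequality (Proposition \ref{prop2.3}, where the restriction $p\le 2$ enters). The outcome is a bound of the form $c(N^\ast>\lambda)\le\min\{1,\,C\lambda^{-q}\widehat{\mathbb{E}}[|\xi|^{\alpha p}]\}$ for a suitable exponent, the Doob constant producing the $(p-1)^{-1}$ factor.

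With the tail bound in hand, I would integrate it distributionally. Using the sub-additivity of $\widehat{\mathbb{E}}$ and the monotone convergence theorem (Theorem \ref{monotone-con-thm}) to justify the layer-cake estimate for the nonlinear expectation, $\widehat{\mathbb{E}}[(N^\ast)^{1/p}]\le\frac{1}{p}\int_0^\infty\lambda^{1/p-1}c(N^\ast>\lambda)\,d\lambda$. Splitting this integral at the level where the two branches of the minimum meet, the small-$\lambda$ part (governed by $c\le 1$) and the large-$\lambda$ tail (governed by the weak-type bound) contribute two different powers of $\widehat{\mathbb{E}}[|\xi|^{\alpha p}]$, their convergence at the two ends forcing the factors $(p-1)^{-1}$ and $(\bar{p}-p)^{-1}$ and the admissibility range $1<p<\bar{p}$. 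Finally I would convert the auxiliary integrability back to the $\beta$-norm by the increasing concave Jensen inequality $\widehat{\mathbb{E}}[|\xi|^{\alpha p}]\le\widehat{\mathbb{E}}[|\xi|^\beta]^{\alpha p/\beta}$, which is immediate from Theorem \ref{thm2.1} since $E^{P}[|\xi|^{\alpha p}]\le(E^{P}[|\xi|^\beta])^{\alpha p/\beta}\le\widehat{\mathbb{E}}[|\xi|^\beta]^{\alpha p/\beta}$ for every $P\in\mathcal{P}$; this turns the two contributions into $\widehat{\mathbb{E}}[|\xi|^\beta]^{1/(p\bar{p})}$ and $\widehat{\mathbb{E}}[|\xi|^\beta]^{1/p}$ and yields the asserted inequality.

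In short, the two Jensen inequalities (one convex, one concave, both read off from the representation) and the distributional integration are routine; the genuine difficulty is the middle step, where the $G$-structure prevents a one-line appeal to Doob and forces one to aggregate the classical maximal inequalities over the representing family $\mathcal{P}$ via its stability under pasting.
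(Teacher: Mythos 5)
First, note that the paper does not prove this theorem: it is quoted from \cite{P10,STZ12,Song11} (and used in the form appearing in \cite{HTW22}), so there is no in-paper argument to match your proposal against. Your overall architecture — pass from $M_t=\widehat{\mathbb{E}}_t[|\xi|^{\alpha}]$ to $N_t=\widehat{\mathbb{E}}_t[|\xi|^{\alpha p}]$ by conditional Jensen, prove a weak-type maximal bound for $N$, integrate it distributionally, and return to the $\beta$-norm by H\"older under each $P\in\mathcal{P}$ — is indeed the route taken in those references, and the first and last steps are carried out correctly.

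The problem is that the pivotal middle step is left as a black box: you assert $c(N^\ast>\lambda)\leq\min\{1,C\lambda^{-q}\widehat{\mathbb{E}}[|\xi|^{\alpha p}]\}$ "for a suitable exponent" and justify it only by gesturing at pasting, stopping times and the $G$-BDG inequality. With $q$ unspecified the subsequent bookkeeping cannot be checked, and the tools you name are partly misdirected: $N$ is not a symmetric $G$-martingale, so it has no stochastic-integral representation to which Proposition \ref{prop2.3} applies without first invoking the $G$-martingale decomposition (which, not BDG, is where a restriction like $p\leq 2$ would actually originate). The estimate you need is in fact much more elementary. For $\eta=|\xi|^{\alpha p}\geq 0$, time-consistency gives $E^P_s\bigl[\widehat{\mathbb{E}}_t[\eta]\bigr]\leq\widehat{\mathbb{E}}_s\bigl[\widehat{\mathbb{E}}_t[\eta]\bigr]=\widehat{\mathbb{E}}_s[\eta]$ for every $P\in\mathcal{P}$ and $s\leq t$, so $t\mapsto N_t$ is a nonnegative $P$-supermartingale under \emph{every} $P$ simultaneously (once a common continuous version is fixed, which is explicit for $\xi\in L_{ip}(\Omega_T)$ and extends by density). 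Classical Doob then gives $\lambda P(N^\ast>\lambda)\leq N_0=\widehat{\mathbb{E}}[\eta]$ for each $P$, and taking $\sup_{P\in\mathcal{P}}$ yields the weak-type bound with $q=1$ and $C=1$; no pasting, stopping-time aggregation, or BDG is required. Plugging $q=1$ into your layer-cake computation gives $\widehat{\mathbb{E}}[(N^\ast)^{1/p}]\leq\frac{p}{p-1}\widehat{\mathbb{E}}[|\xi|^{\alpha p}]^{1/p}\leq\frac{p}{p-1}\widehat{\mathbb{E}}[|\xi|^{\beta}]^{1/\bar{p}}$, a single-term bound which implies the stated inequality because $1/(p\bar{p})\leq 1/\bar{p}\leq 1/p$ forces $x^{1/\bar{p}}\leq x^{1/(p\bar{p})}+x^{1/p}$ for all $x\geq 0$. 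In particular your claim that the factor $(\bar{p}-p)^{-1}$ is "forced by convergence at the two ends" of the split integral does not hold in your own scheme — that factor is simply absorbed into the generous stated constant. So the skeleton is sound, but as written the proof is incomplete at its central step, and the machinery you propose there should be replaced by the supermartingale observation above.
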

\begin{remark}[\cite{HTW22}]\label{Doob-exp}
    Suppose that $X$ is positive such that $e^X\in L_G^2(\Omega)$. Then, there exists a constant
    $\hat{A}_G$ depending only on $\bar{\sigma}$ and $\underline{\sigma}$ such that (taking $\alpha=2,\beta=4$)
    \begin{small}
        \begin{align*}
            \widehat{\mathbb{E}}\left[\sup_{t\in[0,T]}\widehat{\mathbb{E}}_t\left[e^X\right]\right]
            =\widehat{\mathbb{E}}\left[\sup_{t\in[0,T]}\widehat{\mathbb{E}}_t\left[\left(e^{\frac{X}{2}}\right)^2\right]\right]
            \leq \hat{A}_G\widehat{\mathbb{E}}\left[e^{2X}\right].
        \end{align*}
    \end{small}
\end{remark}

Let $S_G^0(0,T)=\{h(t,B_{t_1\wedge t}, \ldots,B_{t_n\wedge t}):t_1,\ldots,t_n\in[0,T],h\in C_{b,Lip}(\mathbb{R}^{n+1})\}$. 
For $p\geq 1$ and $\eta\in S_G^0(0,T)$, define 
$\small \|\eta\|_{S_G^p}=\tiny \biggl\{\widehat{\mathbb{E}}\biggl[\sup_{t\in[0,T]}|\eta_t|^p\biggr]\biggr\}^{1/p}$. 
Denote by $S_G^p(0,T)$ the completion of $S_G^0(0,T)$ under $\small \|\cdot\|_{S_G^p}$.  
Similarly, define $S_G^\infty(0,T)$ 
as the completion of $S_G^0(0,T)$ under $\|\eta\|_{S_G^\infty}:=\|\sup\nolimits{t\in[0,T]}|\eta_t|\|_{L_G^\infty}$.

We denote by $\mathcal{E}_G^p(\mathbb{R})$
the collection of all stochastic processes
$Y$ such that $e^{|Y|}\in S_G^p(0,T),$
$\mathcal{H}_G^p(\mathbb{R})$ the collection of all stochastic processes 
$Z\in H^p_G(0,T)$ and $\mathcal{L}_G^p(\mathbb{R})$
the collection of all stochastic processes $K$ such that $K$ is a
non-increasing G-martingale with $K_0 = 0$ and $K_T \in L^p_G(\Omega)$. 
We write $Y\in \mathcal{E}_G(\mathbb{R})$
if $Y\in \mathcal{E}_G^p(\mathbb{R})$
for any $p \geq 1$. Similarly, we define $\mathcal{H}_G(\mathbb{R})$ 
and $\mathcal{L}_G(\mathbb{R})$.

\subsection{$G$-BMO martingales and $G$-Girsanov Theorem}
We now review results on $G$-BMO martingale and $G$-Girsanov Theorem from
\cite{HLS18,PZ13}.
\begin{definition}\label{def2.2}
    A process $\{M_t\}$ with values in $L_G^1(\Omega_T)$ is called 
    a $G$-martingale (supermartingale,submartingale) if
    $\widehat{\mathbb{E}}_t[M_s]=M_s(\leq,\geq)$ for any $s\leq t$.
    Especially, The process $\{M_t\}$ is called a symmetric $G$-martingale
    if $-M$ is also a $G$-martingale.
\end{definition}

\begin{definition}\label{def2.3}
    For $Z\in H_G^2(0,T)$, a symmetric $G$-martingale $\int_0^\cdot Z_sdB_s$ on $[0,T]$ 
    is called a $G$-BMO martingale if
    \begin{small}
        \begin{equation*}
            \|Z\|_{\text{BMO}_G}^2:=\sup_{P\in\mathcal{P}}\|Z\|_{\text{BMO}(P)}^2
            =\sup_{P\in\mathcal{P}} \bigg[\sup_{\tau\in\mathcal{T}_0^T}
            \Biggl\|E_\tau^{P}\Big[\int_\tau^T|Z_t|^2d\langle B\rangle_t\Big]\Biggr\|_{L^\infty(P)} \bigg]<+\infty,
        \end{equation*}
    \end{small}where $\mathcal{T}_0^T$ denotes the totality of all 
    $\mathcal{F}$-stopping times taking values in $[0,T]$ 
    and $\|Z\|_{BMO(P)}$ stands for the BMO norm of 
    $\int_0^\cdot Z_sdB_s$ under probability measure $P$.
\end{definition}

The space $\text{BMO}_G$ is defined as 
\begin{small}
    \begin{align*}
        \text{BMO}_G:=\{Z\in H_G^2(0,T):\|Z\|_{\text{BMO}_G}<+\infty\}.
    \end{align*}
\end{small}We can define an exponential $G$-martingale via $G$-BMO martingale as in the classical case.
\begin{lemma}[\cite{HLS18}]\label{lemma2.1}
    For $Z\in \text{BMO}_G$, the process
    \begin{small}
        \begin{align*}
            \mathcal{E}(Z)_t:=\mathcal{E}\left(\int_0^t Z_s dB_s-\frac{1}{2}\int_0^t|Z_s|^2d\langle B\rangle_s \right), \quad t\ge 0
        \end{align*}
    \end{small}is a symmetric $G$-martingale.
\end{lemma}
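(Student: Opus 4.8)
The plan is to exploit the representation $\widehat{\mathbb{E}}=\sup_{P\in\mathcal{P}}E_{P}$ of Theorem \ref{thm2.1} to reduce the statement to a measure-by-measure problem, to apply classical BMO martingale theory under each $P\in\mathcal{P}$, and then to lift the resulting $P$-wise martingale property back to a symmetric $G$-martingale property by means of the conditional representation in Theorem \ref{thm2.2}. The two facts I will need are that $\mathcal{E}(Z)$ is a martingale under every $P\in\mathcal{P}$ and that $\mathcal{E}(Z)_t\in L_G^1(\Omega_T)$ for each $t$.

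First I would fix $P\in\mathcal{P}$. By the very definition of the $G$-BMO norm, $\|Z\|_{\text{BMO}(P)}\leq\|Z\|_{\text{BMO}_G}<\infty$, so $\int_0^\cdot Z_s dB_s$ is a genuine BMO martingale under $P$; by Kazamaki's criterion its Dol\'eans--Dade exponential, which is exactly $\mathcal{E}(Z)$, is a uniformly integrable $P$-martingale, i.e. $E_s^{P}[\mathcal{E}(Z)_t]=\mathcal{E}(Z)_s$, $P$-a.s., for all $0\le s\le t\le T$. The point that makes this work in the $G$-setting is that the scalar bound $\|Z\|_{\text{BMO}_G}$ controls $\|Z\|_{\text{BMO}(P)}$ for \emph{every} $P$ at once, so the classical John--Nirenberg (energy) inequality for $\int_0^T|Z_s|^2 d\langle B\rangle_s$ and the reverse H\"older inequality $(R_p)$ for $\mathcal{E}(Z)$ hold with constants independent of $P$. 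This yields, for some $p>1$, the uniform moment bound $\sup_{P\in\mathcal{P}}E_P[\mathcal{E}(Z)_t^p]<\infty$; combined with the uniform integrability this furnishes and the quasi-continuity of $\mathcal{E}(Z)$ as a path functional, it places $\mathcal{E}(Z)_t$ in $L_G^1(\Omega_T)$.

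Next I would upgrade the $P$-wise martingale property to the $G$-martingale property. By Theorem \ref{thm2.2}, for any $P\in\mathcal{P}$ we have $\widehat{\mathbb{E}}_s[\mathcal{E}(Z)_t]=\esssup_{P'\in\mathcal{P}(s,P)}E_s^{P'}[\mathcal{E}(Z)_t]$, $P$-a.s. Since every $P'\in\mathcal{P}(s,P)\subseteq\mathcal{P}$ makes $\mathcal{E}(Z)$ a martingale, each term equals $\mathcal{E}(Z)_s$, which is $\mathcal{F}_s$-measurable and hence unaffected by the conditioning because $P'=P$ on $\mathcal{F}_s$; the essential supremum therefore collapses to $\mathcal{E}(Z)_s$, giving $\widehat{\mathbb{E}}_s[\mathcal{E}(Z)_t]=\mathcal{E}(Z)_s$ quasi-surely. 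Applying the identical computation to $-\mathcal{E}(Z)$ and using $E_s^{P'}[-\mathcal{E}(Z)_t]=-\mathcal{E}(Z)_s$ yields $\widehat{\mathbb{E}}_s[-\mathcal{E}(Z)_t]=-\mathcal{E}(Z)_s$, since the essential supremum of a single constant is that constant irrespective of sign. Hence both $\mathcal{E}(Z)$ and $-\mathcal{E}(Z)$ are $G$-martingales, i.e. $\mathcal{E}(Z)$ is a symmetric $G$-martingale.

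The main obstacle is the uniform-in-$P$ integrability of the second step of the plan: one must verify that the moment control of $\mathcal{E}(Z)$ and the exponential integrability of $\int_0^T|Z_s|^2 d\langle B\rangle_s$ supplied by the single number $\|Z\|_{\text{BMO}_G}$ are genuinely uniform over the whole representing set $\mathcal{P}$, since this is precisely what simultaneously secures $\mathcal{E}(Z)_t\in L_G^1$ and legitimizes the collapse of the essential supremum in Theorem \ref{thm2.2}. A route that sidesteps the $L_G^1$-membership bookkeeping is to apply the $G$-It\^o formula to obtain $\mathcal{E}(Z)_t=1+\int_0^t \mathcal{E}(Z)_s Z_s dB_s$ and then to check $\mathcal{E}(Z)_\cdot Z_\cdot\in H_G^2(0,T)$ through a H\"older splitting of $\widehat{\mathbb{E}}\bigl[(\sup_{s}\mathcal{E}(Z)_s^2)\int_0^T|Z_s|^2 ds\bigr]$ into the uniform reverse-H\"older moment of $\sup_{s}\mathcal{E}(Z)_s$ against the John--Nirenberg moment of $\int_0^T|Z_s|^2 ds$; as a $G$-It\^o integral of an $H_G^2$ integrand is automatically a symmetric $G$-martingale, this closes the argument along the same analytic core.
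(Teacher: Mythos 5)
The paper offers no proof of this lemma: it is imported verbatim from \cite{HLS18}, so the only in-paper "argument" is the citation. Your proposal is essentially correct, and you in fact describe both natural proofs. Your primary route --- fix $P\in\mathcal{P}$, observe $\|Z\|_{\mathrm{BMO}(P)}\le\|Z\|_{\mathrm{BMO}_G}$, invoke Kazamaki's criterion to make $\mathcal{E}(Z)$ a uniformly integrable martingale under every $P$ simultaneously, then collapse the essential supremum in Theorem \ref{thm2.2} for both $\mathcal{E}(Z)$ and $-\mathcal{E}(Z)$ --- is sound, and you correctly flag its one real cost: $\mathcal{E}(Z)_t\in L_G^1(\Omega_T)$ must be certified \emph{before} $\widehat{\mathbb{E}}_t[\cdot]$ and Theorem \ref{thm2.2} even apply. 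That membership does not follow from $\sup_{P}E_P[\mathcal{E}(Z)_t^p]<\infty$ alone; by the characterization of $L_G^1$ in \cite{DHP11} you also need quasi-continuity, which holds because $\int_0^t Z_s\,dB_s$ and $\int_0^t|Z_s|^2\,d\langle B\rangle_s$ lie in $L_G^2(\Omega_t)$ and $L_G^1(\Omega_t)$ respectively for $Z\in H_G^2(0,T)$, and the exponential of a quasi-continuous variable is quasi-continuous; the uniform-in-$P$ reverse H\"older bound of Lemma \ref{reverse holder} (with $q$ chosen close enough to $1$ that $\|Z\|_{\mathrm{BMO}_G}<\phi(q)$) then supplies the required tightness of $\widehat{\mathbb{E}}[\mathcal{E}(Z)_t\mathbf{1}_{\{\mathcal{E}(Z)_t>n\}}]$. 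Your closing alternative --- write $\mathcal{E}(Z)_t=1+\int_0^t\mathcal{E}(Z)_sZ_s\,dB_s$ and verify $\mathcal{E}(Z)Z\in H_G^2(0,T)$ by splitting $\widehat{\mathbb{E}}[\sup_s\mathcal{E}(Z)_s^2\int_0^T|Z_s|^2ds]$ with H\"older, the reverse H\"older moment of $\sup_s\mathcal{E}(Z)_s$ and the energy (John--Nirenberg) bound for the BMO integrand --- is precisely the argument of \cite{HLS18}, and it is the cleaner one: a $G$-It\^o integral of an $H_G^2$ integrand is a symmetric $G$-martingale by construction, so both the $L_G^1$-membership and the essential-supremum collapse come for free. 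No genuine gap in either route.
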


The following general reverse H\"{o}lder's inequality plays an important role in
$G$-BMO martingale theory.
\begin{lemma}[\cite{CT20}]\label{reverse holder}
    (Reverse H\"{o}lder Inequality) Let 
    $\small \phi(x)=\Big\{1+\frac{1}{x^2}\log\frac{2x-1}{2(x-1)}\Big\}^{\frac{1}{2}}-1$ 
    and $1<q<+\infty.$ If $\|Z\|_{\text{BMO}_G}<\phi(q),$ we have
    \begin{small}
        \begin{equation*}
            \sup_{ P\in \mathcal{P}}\sup_{\tau\in\mathcal{T}_0^T}
            \Biggl\|E^{P}_\tau\Big[\Big\{\frac{\mathcal{E}(Z)_T}{\mathcal{E}(Z)_\tau}\Big\}^q\Big]\Biggr\|_{L^\infty( P)}\leq C_q
        \end{equation*}
    \end{small}
    for a constant $C_q>0$ depending only on $q$.
\end{lemma}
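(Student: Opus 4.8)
The plan is to reduce the statement to the classical reverse Hölder inequality for BMO martingales (in the form due to Kazamaki), exploiting the representation of the $G$-expectation as an upper envelope of a family of linear expectations. By Theorem \ref{thm2.1} the $G$-expectation is represented by a weakly compact set $\mathcal{P}$ of probability measures, and under each fixed $P\in\mathcal{P}$ the canonical process $B$ is a continuous $P$-martingale whose quadratic variation coincides $P$-a.s. with the process $\langle B\rangle$. Hence, fixing $P$, the $G$-stochastic integral $\int_0^\cdot Z_s\,dB_s$ agrees $P$-a.s. with the classical It\^o integral of $Z$ against the $P$-martingale $B$, and the $G$-exponential $\mathcal{E}(Z)$ of Lemma \ref{lemma2.1} coincides $P$-a.s. with the Dol\'eans--Dade exponential of $M^P:=\int_0^\cdot Z_s\,dB_s$, since $\langle M^P\rangle_t=\int_0^t|Z_s|^2\,d\langle B\rangle_s$ under $P$.

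The definition of the $G$-BMO norm is tailored to this reduction: by Definition \ref{def2.3} one has $\|Z\|_{\text{BMO}(P)}\le\|Z\|_{\text{BMO}_G}<\phi(q)$ for every $P\in\mathcal{P}$. The number $\phi(q)$ is precisely the classical Kazamaki threshold guaranteeing that a BMO martingale whose norm lies strictly below $\phi(q)$ generates a Dol\'eans--Dade exponential satisfying the reverse H\"older inequality $(R_q)$. Applying this classical result under each $P$ to $M^P$, we obtain, for every stopping time $\tau\in\mathcal{T}_0^T$,
\[
E^{P}_\tau\left[\left\{\frac{\mathcal{E}(Z)_T}{\mathcal{E}(Z)_\tau}\right\}^q\right]\le C_q,\qquad P\text{-a.s.},
\]
where $C_q$ depends only on $q$ and on the common upper bound $\|Z\|_{\text{BMO}_G}$ for the $P$-BMO norms.

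It then remains to pass from this per-$P$ estimate to the claimed uniform bound. Taking the $L^\infty(P)$ norm in $\tau$ — the right-hand side $C_q$ being a deterministic constant independent of $\tau$ and of $P$ — and then the supremum over $\tau\in\mathcal{T}_0^T$ and over $P\in\mathcal{P}$ yields the assertion. The main obstacle is ensuring that the constant $C_q$ can indeed be chosen uniformly across the (generally mutually singular) measures in $\mathcal{P}$; this is resolved by the fact that all $P\in\mathcal{P}$ share the single BMO bound $\|Z\|_{\text{BMO}_G}$, which lies strictly below $\phi(q)$, so the classical constant — monotone in the BMO norm and finite below the threshold — may be taken to be the value corresponding to $\|Z\|_{\text{BMO}_G}$. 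A secondary technical point is the pathwise aggregation of the $G$-stochastic integral and of $\langle B\rangle$, so that the random variables appearing under the different $P$ are consistent modifications of the same canonical-space processes; this is guaranteed by the quasi-sure construction of $G$-stochastic calculus.
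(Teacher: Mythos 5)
The paper does not prove this lemma at all --- it is quoted verbatim from \cite{CT20} --- and your argument is precisely the standard one used there: reduce measure-by-measure to Kazamaki's classical reverse H\"older inequality, using that under each fixed $P\in\mathcal{P}$ the canonical process is a continuous martingale, that $\mathcal{E}(Z)$ agrees $P$-a.s.\ with the Dol\'eans--Dade exponential of $\int_0^\cdot Z_s\,dB_s$, and that $\|Z\|_{\text{BMO}(P)}\le\|Z\|_{\text{BMO}_G}<\phi(q)$ for every $P$, so the classical constant (monotone in the BMO norm and finite below the threshold $\phi(q)$) can be chosen uniformly over the mutually singular measures in $\mathcal{P}$. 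Your treatment of the two delicate points --- quasi-sure aggregation of the integral and of $\langle B\rangle$, and the uniformity of $C_q$ across $\mathcal{P}$ --- is correct, so the proof is sound and essentially identical to the one in the cited source.
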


With the exponential $G$-martingale, we can generalize the $G$-Girsanov theorem. 
From \cite{HLS18}, we can define a new $G$-expectation 
$\tilde{\mathbb{E}}[\cdot]$ with $\mathcal{E}(Z)$ satisfying
\begin{small}
    \begin{equation*}%\label{tilde-E}
        \tilde{\mathbb{E}}[X]=\sup_{P\in\mathcal{P}}E^P[\mathcal{E}(Z)_TX]
        =\widehat{\mathbb{E}}[\mathcal{E}(Z)_TX],\quad \forall X\in L^p_G(\Omega_T),
    \end{equation*}    
\end{small}where $\small p>\frac{q}{q-1}$ and $q$ is the order in the reverse 
H\"{o}lder inequality for $\mathcal{E}(Z)$.
Moreover, the conditional expectation $\tilde{\mathbb{E}}_t[\cdot]$ 
is defined as
\begin{small}
    \begin{equation*}
        \tilde{\mathbb{E}}_t[X]=\widehat{\mathbb{E}}_t\Big[\frac{\mathcal{E}(Z)_T}{\mathcal{E}(Z)_t}X\Big],~~\text{q.s.},~~\forall X\in L^p_G(\Omega_T).
    \end{equation*}
\end{small}

The following lemmas give the Girsanov theorem in the $G$-framework.
\begin{lemma}[\cite{HLS18}]\label{lemma-Gisr-1}
    Suppose that $Z\in \text{BMO}_G$. We define a new $G$-expectation 
    $\tilde{\mathbb{E}}[\cdot]$ by $\mathcal{E}(Z)$. Then the process 
    $B-\int Zd\langle B\rangle$ is a $G$-Brownian motion under $\tilde{\mathbb{E}}[\cdot]$.
\end{lemma}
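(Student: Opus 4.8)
The plan is to prove the statement through the representation $\widehat{\mathbb{E}}=\sup_{P\in\mathcal{P}}E^P$ of Theorem \ref{thm2.1} together with a pathwise application of the classical Girsanov theorem under each $P\in\mathcal{P}$. First I would recall the structure of the representing set: each $P\in\mathcal{P}$ turns the canonical process $B$ into a continuous $P$-martingale whose quadratic variation is absolutely continuous with a density $(\sigma^P_t)^2$ valued in $[\underline{\sigma}^2,\bar{\sigma}^2]$, and $\mathcal{P}$ is rich enough to realize every adapted $[\underline{\sigma},\bar{\sigma}]$-valued volatility. Since $\mathcal{E}(Z)$ is a symmetric $G$-martingale by Lemma \ref{lemma2.1}, both $\mathcal{E}(Z)$ and $-\mathcal{E}(Z)$ are $G$-martingales with value $1$ and $-1$ at time $0$, so $\widehat{\mathbb{E}}[\mathcal{E}(Z)_T]=1$ and $\widehat{\mathbb{E}}[-\mathcal{E}(Z)_T]=-1$; hence $\sup_{P}E^P[\mathcal{E}(Z)_T]=\inf_{P}E^P[\mathcal{E}(Z)_T]=1$, which forces $E^P[\mathcal{E}(Z)_T]=1$ for every $P\in\mathcal{P}$. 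Together with $\mathcal{E}(Z)_T\geq 0$ this shows that $dP^Z:=\mathcal{E}(Z)_T\,dP$ is a probability measure, and from the definition of the new expectation,
\begin{equation*}
\tilde{\mathbb{E}}[X]=\widehat{\mathbb{E}}[\mathcal{E}(Z)_T X]=\sup_{P\in\mathcal{P}}E^P[\mathcal{E}(Z)_T X]=\sup_{P\in\mathcal{P}}E^{P^Z}[X],
\end{equation*}
so $\tilde{\mathbb{E}}$ is represented by the family $\tilde{\mathcal{P}}:=\{P^Z:P\in\mathcal{P}\}$.

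Next I would transfer Girsanov pathwise. Fix $P\in\mathcal{P}$; under $P$ the process $\int_0^\cdot Z_s\,dB_s$ is a continuous martingale, and Lemma \ref{reverse holder} supplies, for a suitable exponent $q$, the reverse H\"older inequality that guarantees $\mathcal{E}(Z)$ is a uniformly integrable $P$-martingale with $P^Z\sim P$. The classical Girsanov theorem then yields that $\tilde{B}_t:=B_t-\int_0^t Z_s\,d\langle B\rangle_s$ is a continuous $P^Z$-martingale, and since an equivalent change of measure leaves the quadratic variation unchanged, $\langle\tilde{B}\rangle_t=\langle B\rangle_t$, so $d\langle\tilde{B}\rangle_t=(\sigma^P_t)^2\,dt$ with $\sigma^P_t\in[\underline{\sigma},\bar{\sigma}]$ under $P^Z$. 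Thus under every $Q\in\tilde{\mathcal{P}}$ the process $\tilde{B}$ starts from $0$ and is a continuous martingale whose volatility ranges in $[\underline{\sigma},\bar{\sigma}]$, and crucially the set of achievable volatility processes for $\tilde{B}$ under $\tilde{\mathcal{P}}$ coincides with that of $B$ under $\mathcal{P}$.

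Finally I would identify the $G$-normal law of the increments. For $\varphi\in C_{b,Lip}(\mathbb{R})$ and $0\leq s\leq t\leq T$, let $u$ solve the $G$-heat equation $\partial_\tau u=G(\partial_{xx}u)$ on $[0,t-s]\times\mathbb{R}$ with $u(0,\cdot)=\varphi$. Applying the classical It\^o formula to $M_r:=u(t-r,\tilde{B}_r-\tilde{B}_s)$ on $[s,t]$ under each $Q=P^Z$ gives
\begin{equation*}
dM_r=\Big[-G(\partial_{xx}u)+\tfrac{1}{2}(\sigma^P_r)^2\,\partial_{xx}u\Big]\,dr+\partial_x u\,d\tilde{B}_r,
\end{equation*}
and since $G(a)=\tfrac12\sup_{\sigma^2\in[\underline{\sigma}^2,\bar{\sigma}^2]}\sigma^2 a\geq\tfrac12(\sigma^P_r)^2\partial_{xx}u$, the drift is nonpositive, so $M$ is a $Q$-supermartingale and $E^Q[\varphi(\tilde{B}_t-\tilde{B}_s)]\leq u(t-s,0)$; taking the supremum over $\tilde{\mathcal{P}}$ yields $\tilde{\mathbb{E}}[\varphi(\tilde{B}_t-\tilde{B}_s)]\leq u(t-s,0)$. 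The reverse inequality is the delicate point: using the richness of $\mathcal{P}$ I would select a volatility equal to $\bar{\sigma}$ where $\partial_{xx}u\geq0$ and $\underline{\sigma}$ otherwise, making the drift vanish and $M$ a genuine $Q$-martingale, so equality holds and $\tilde{B}_t-\tilde{B}_s$ is $G$-normal $N(0,[(t-s)\underline{\sigma}^2,(t-s)\bar{\sigma}^2])$. The same argument applied conditionally, via the conditional representation of Theorem \ref{thm2.2} transported to $\tilde{\mathbb{E}}_s[\cdot]=\widehat{\mathbb{E}}_s\big[\tfrac{\mathcal{E}(Z)_T}{\mathcal{E}(Z)_s}\,\cdot\big]$, gives $\tilde{\mathbb{E}}_s[\varphi(\tilde{B}_t-\tilde{B}_s)]=u(t-s,0)$ and independence of the increment from $\mathcal{F}_s$, which together establish that $\tilde{B}$ is a $G$-Brownian motion under $\tilde{\mathbb{E}}$. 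The hard part throughout is this matching of the supremum over $\tilde{\mathcal{P}}$ with the full nonlinearity $G$: it rests on the fact that Girsanov preserves quadratic variation, so the volatility range driving the $G$-heat equation is untouched by the change of expectation.
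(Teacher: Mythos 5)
The paper does not prove Lemma \ref{lemma-Gisr-1}: it is quoted as a known result from \cite{HLS18}, so the relevant comparison is with the argument given there.

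Your opening steps are sound. The symmetric-martingale property from Lemma \ref{lemma2.1} does force $E^P[\mathcal{E}(Z)_T]=1$ for every $P\in\mathcal{P}$, the transformed family $\tilde{\mathcal{P}}=\{P^Z:P\in\mathcal{P}\}$ represents $\tilde{\mathbb{E}}$, classical Girsanov applies under each $P$, and the supermartingale comparison with the $G$-heat equation yields $\tilde{\mathbb{E}}[\varphi(\tilde{B}_t-\tilde{B}_s)]\leq u(t-s,0)$. The genuine gaps sit exactly at the two places you yourself flag as delicate, and they are not cosmetic. First, for the reverse inequality you need a measure in $\tilde{\mathcal{P}}$ under which the volatility of $\tilde{B}$ follows the bang-bang feedback $\bar{\sigma}\mathbf{1}_{\{\partial_{xx}u\geq 0\}}+\underline{\sigma}\mathbf{1}_{\{\partial_{xx}u<0\}}$ evaluated along $\tilde{B}$ itself. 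The map $P\mapsto P^Z$ preserves the volatility process, but the desired feedback is a discontinuous functional of the \emph{transformed} path, so exhibiting the required $P\in\mathcal{P}$ amounts to solving an SDE with discontinuous diffusion coefficient and verifying that its law belongs to the representing set; ``richness of $\mathcal{P}$'' does not hand you this, and since the paper allows $\underline{\sigma}=0$, $u$ need not even be $C^{1,2}$, so the verification itself requires an approximation layer. Second, to conclude that $\tilde{B}$ is a $G$-Brownian motion you need the conditional identity $\tilde{\mathbb{E}}_s[\varphi(\tilde{B}_t-\tilde{B}_s)]=u(t-s,0)$ together with independence of increments; transporting Theorem \ref{thm2.2} to $\tilde{\mathbb{E}}_s$ requires a Bayes-rule identification of $\tilde{\mathbb{E}}_s$ with an essential supremum over the pasted transformed family, which you assert but do not carry out.

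Both difficulties are bypassed by the route in \cite{HLS18}: apply the $G$-It\^{o} formula to $u(t-r,\tilde{B}_r-\tilde{B}_s)$ under $\widehat{\mathbb{E}}$ and use $\partial_t u=G(\partial_{xx}u)$ to write it as $u(t-s,0)+\int_s^r\partial_x u\,d\tilde{B}+(K_r-K_s)$, where $K=\frac{1}{2}\int\partial_{xx}u\,d\langle B\rangle-\int G(\partial_{xx}u)\,dr$ is a non-increasing $G$-martingale. The stochastic integral against $\tilde{B}$ is a symmetric martingale under $\tilde{\mathbb{E}}$ (a Bayes computation with $\mathcal{E}(Z)$, with integrability supplied by Lemma \ref{reverse holder}), and $K$ remains a non-increasing $G$-martingale under $\tilde{\mathbb{E}}$ by the companion Lemma \ref{lemma-Gisr-2}; taking $\tilde{\mathbb{E}}_s$ then gives the exact conditional identity in one stroke, with no optimizing measure to exhibit. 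If you wish to keep the representation-based approach, you must supply the measurable-selection argument for the attainability of the supremum and the conditional representation of $\tilde{\mathbb{E}}_s$; as written, the proof is incomplete at precisely its two load-bearing steps.
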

    
\begin{lemma}[\cite{HLS18}]\label{lemma-Gisr-2}
    Suppose that $Z\in \text{BMO}_G$. We define a new $G$-expectation 
    $\tilde{\mathbb{E}}[\cdot]$ by $\mathcal{E}(Z)$. Suppose that $K$ is a decreasing 
    $G$-martingale such that $K_0=0$ and for some 
    $p>\frac{q}{q-1}, K_t\in L_G^p(\Omega_t), 0\leq t\leq T,$ 
    where $q$ is the order in the reverse H\"{o}lder inequality 
    for $\mathcal{E}(Z)$. Then $K$ is a decreasing 
    $G$-martingale under $\tilde{\mathbb{E}}[\cdot]$.
\end{lemma}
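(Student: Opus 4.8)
The plan is to verify that $K$ remains a non-increasing $G$-martingale under $\tilde{\mathbb{E}}[\cdot]$. The monotonicity $K_0=0$ and the property that $s\mapsto K_s$ is non-increasing are pathwise (q.s.) facts, hence unaffected by the change of $G$-expectation; in particular $K_s\leq 0$ for every $s$. Thus the entire content of the lemma is the conditional identity $\tilde{\mathbb{E}}_t[K_s]=K_t$ for all $0\leq t\leq s\leq T$, the conditional expectation being well defined because $K_s\in L_G^p(\Omega_s)$ with $p>\frac{q}{q-1}$.

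First I would rewrite the left-hand side via the definition $\tilde{\mathbb{E}}_t[X]=\widehat{\mathbb{E}}_t\big[\frac{\mathcal{E}(Z)_T}{\mathcal{E}(Z)_t}X\big]$. Since $\mathcal{E}(Z)_t>0$ is $\mathcal{F}_t$-measurable it may be pulled out, and the tower property lets me condition first on $\mathcal{F}_s$. Because $K_s\leq 0$ is $\mathcal{F}_s$-measurable, pulling it through the inner conditional expectation produces the factor $-\widehat{\mathbb{E}}_s[-\mathcal{E}(Z)_T]$, which equals $\widehat{\mathbb{E}}_s[\mathcal{E}(Z)_T]=\mathcal{E}(Z)_s$ precisely because $\mathcal{E}(Z)$ is a symmetric $G$-martingale (Lemma \ref{lemma2.1}). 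This collapses the identity to
\begin{equation*}
\tilde{\mathbb{E}}_t[K_s]=\frac{1}{\mathcal{E}(Z)_t}\widehat{\mathbb{E}}_t\big[\mathcal{E}(Z)_s K_s\big],
\end{equation*}
so it suffices to prove that $\mathcal{E}(Z)K$ is a $G$-martingale under $\widehat{\mathbb{E}}$, i.e. $\widehat{\mathbb{E}}_t[\mathcal{E}(Z)_sK_s]=\mathcal{E}(Z)_tK_t$.

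To this end I would apply the $G$-It\^{o} product formula. As $K$ is non-increasing it has finite variation, so the mutual variation $\langle \mathcal{E}(Z),K\rangle$ vanishes and, using $d\mathcal{E}(Z)_u=\mathcal{E}(Z)_uZ_u\,dB_u$,
\begin{equation*}
\mathcal{E}(Z)_sK_s-\mathcal{E}(Z)_tK_t=\int_t^s K_u\mathcal{E}(Z)_uZ_u\,dB_u+\int_t^s \mathcal{E}(Z)_u\,dK_u.
\end{equation*}
The first integral is a symmetric $G$-martingale increment, so both $\widehat{\mathbb{E}}_t[\,\cdot\,]$ and $\widehat{\mathbb{E}}_t[-\,\cdot\,]$ vanish; by sublinearity, adding such a term leaves a conditional $G$-expectation unchanged. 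Hence the problem reduces to $\widehat{\mathbb{E}}_t[\int_t^s \mathcal{E}(Z)_u\,dK_u]=0$. For this I would approximate the continuous positive process $\mathcal{E}(Z)$ by simple processes $\phi^n=\sum_i \phi^n_{t_i}\mathbf{1}_{[t_i,t_{i+1})}$ with $\phi^n_{t_i}\geq 0$ and $\mathcal{F}_{t_i}$-measurable. For each elementary term, pulling out the non-negative factor $\phi^n_{t_i}$ and invoking the $G$-martingale property of $K$ gives $\widehat{\mathbb{E}}_{t_i}[\phi^n_{t_i}(K_{t_{i+1}}-K_{t_i})]=0$; summing from $t$ to $s$ through the tower property yields $\widehat{\mathbb{E}}_t[\int_t^s \phi^n_u\,dK_u]=0$.

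The main obstacle is the passage to the limit $\phi^n\to\mathcal{E}(Z)$, which is exactly where the integrability hypotheses enter. I would control $\widehat{\mathbb{E}}[\int_t^s|\phi^n_u-\mathcal{E}(Z)_u|\,(-dK_u)]$ by H\"{o}lder's inequality together with the reverse H\"{o}lder inequality (Lemma \ref{reverse holder}): the bound $\|Z\|_{\text{BMO}_G}<\phi(q)$ yields $L_G^{q}$-type control of $\mathcal{E}(Z)$, and since $p>\frac{q}{q-1}$ the conjugate exponent $p'=p/(p-1)$ satisfies $p'<q$, so the products $\mathcal{E}(Z)_u(-dK_u)$ are uniformly integrable against the total variation $K_t-K_s\in L_G^p$. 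This justifies $\widehat{\mathbb{E}}_t[\int_t^s \mathcal{E}(Z)_u\,dK_u]=0$ and closes the chain of identities. I expect the delicate points to be this uniform integrability and the verification that each stochastic integral above lies in the spaces where the symmetric-martingale and reverse-H\"{o}lder estimates apply; once these are secured, it follows that $K$ is a non-increasing $G$-martingale under $\tilde{\mathbb{E}}[\cdot]$.
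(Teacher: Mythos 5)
The paper offers no proof of this lemma: it is quoted directly from \cite{HLS18}, so there is nothing in the text to compare your argument against line by line. Your reconstruction is correct and follows what is essentially the route of the cited source: reduce $\tilde{\mathbb{E}}_t[K_s]=K_t$ (the only non-pathwise part of the claim) to the assertion that $\mathcal{E}(Z)K$ is a $\widehat{\mathbb{E}}$-martingale, discard the $\int K\mathcal{E}(Z)Z\,dB$ term by the symmetry of the stochastic integral, and show that $\int \mathcal{E}(Z)\,dK$ inherits the decreasing-$G$-martingale property from $K$ via simple-process approximation, with the reverse H\"older inequality (Lemma \ref{reverse holder}) and the exponent relation $p'=p/(p-1)<q$ supplying the integrability needed to pass to the limit. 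Two technical points deserve more care than your sketch gives them, though neither is a conceptual gap. First, the hypotheses only guarantee $1/p+1/q<1$, not $2/p+2/q\le 1$, so $K\mathcal{E}(Z)Z$ need not lie in $H_G^2$; the integral $\int K\mathcal{E}(Z)Z\,dB$ should be handled in $H_G^\alpha$ for some $\alpha\in(1,2)$ (for which the symmetric-martingale property still holds) or by localization. Second, the passage $\phi^n\to\mathcal{E}(Z)$ cannot be closed by a naive dominated convergence theorem, which fails under $\widehat{\mathbb{E}}$; one must combine quasi-sure convergence with domination of $\sup_u|\phi^n_u-\mathcal{E}(Z)_u|$ by $2\sup_u\mathcal{E}(Z)_u\in L_G^{p'}$ (uniform integrability over $\mathcal{P}$), and the identity $\widehat{\mathbb{E}}_{t_i}[\phi^n_{t_i}(K_{t_{i+1}}-K_{t_i})]=\phi^n_{t_i}\widehat{\mathbb{E}}_{t_i}[K_{t_{i+1}}-K_{t_i}]$ should first be established for bounded $\phi^n_{t_i}$ and then extended. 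With those refinements your proof stands on its own.
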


\subsection{Backward Skorokhod problem with two nonlinear reflecting boundaries}
In this subsection, we recall the backward Skorokhod problem 
studied in \cite{Li24}, which is important for the construction 
of solutions to $G$-BSDEs with double mean reflections. 
We introduce the following notations.

\begin{enumerate}
    \item[$\bullet$] $C[0,T]$: the set of continuous functions from $[0,T]$ to $\mathbb{R}$.
    \item[$\bullet$] $BV[0,T]$: the set of functions in $C[0,T]$ starting from the origin with bounded variation on $[0,T]$.
    \item[$\bullet$] $I[0,T]$: the set of functions in $C[0,T]$ starting from the origin which is nondecreasing.
\end{enumerate}

To proceed, we first recall the Skorokhod problem studied in \cite{Li23}.
\begin{definition}\label{def-solution-SP}
Let $s\in C[0,T]$,  and $l,r:[0,T] \times \mathbb{R}\rightarrow \mathbb{R}$ be 
two functions with $l\leq r$.  A pair of functions $(x,K)\in C[0,T]\times BV[0,T]$ 
is called a solution to the  Skorokhod problem for $s$ with 
nonlinear constraints $l,r$ ($(x,K)=\mathbb{SP}_l^r(s)$ for short) 
if 
\begin{enumerate}
    \item[(i)] $x_t=s_t+K_t$;
    \item[(ii)] $l(t,x_t)\leq 0\leq r(t,x_t)$;
    \item[(iii)]  $K_{0-}=0$ and $K$ has the decomposition $K=K^r-K^l$, where $K^r,K^l$ are nondecreasing functions satisfying  
    \begin{small}
        \begin{align*}
            \int_0^{T} I_{\{l(s,x_s)<0\}}dK^l_s=0, \  \int_0^{T} I_{\{r(s,x_s)>0\}}dK^r_s=0.
        \end{align*}
    \end{small}
\end{enumerate}
\end{definition}

We propose the following assumption on the functions $l,r$.
\begin{assumption}\label{assumption-l-r}
    The functions $l,r:[0,T]\times \mathbb{R}\rightarrow \mathbb{R}$ 
    satisfy the following conditions:
    \begin{enumerate}
    \item[(i)] For each fixed $x\in\mathbb{R}$, $l(\cdot,x),r(\cdot,x)\in C[0,T]$;
    \item[(ii)] For any fixed $t\in[0,T]$, $l(t,\cdot)$, $r(t,\cdot)$ are strictly increasing;
    \item[(iii)] There exists two positive constants $0<c<C<\infty$, 
    such that for any $t\in [0,T]$ and $x,y\in \mathbb{R}$,
    \begin{small}
        \begin{align*}
            &c|x-y|\leq |l(t,x)-l(t,y)|\leq C|x-y|,\\
            &c|x-y|\leq |r(t,x)-r(t,y)|\leq C|x-y|.
            \end{align*}
    \end{small}
    \item[(iv)] $\inf_{(t,x)\in[0,T]\times\mathbb{R}}(r(t,x)-l(t,x))>0$.
    \end{enumerate}
\end{assumption}

\begin{theorem}[\cite{Li24}]\label{thm-SP}
Suppose that $l,r$ satisfy Assumption \ref{assumption-l-r}. 
For any given $s\in C[0,T]$, there exists a unique pair of solution 
to the Skorokhod problem $(x,K)=\mathbb{SP}_l^r(s)$.
\end{theorem}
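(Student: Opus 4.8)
The plan is to prove existence and uniqueness of the solution $(x,K)$ to the Skorokhod problem $\mathbb{SP}_l^r(s)$ separately, since the two require rather different arguments. I would begin with uniqueness, as it is the cleaner of the two and its estimate often feeds into the existence construction.

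\textbf{Uniqueness.} Suppose $(x^1,K^1)$ and $(x^2,K^2)$ are two solutions for the same input $s$. Then $x^1_t-x^2_t=K^1_t-K^2_t$, so the difference is of bounded variation and starts at $0$. The natural move is to study $|x^1_t-x^2_t|^2$ and integrate by parts (Stieltjes integration), writing
\begin{small}
\begin{align*}
|x^1_t-x^2_t|^2=2\int_0^t (x^1_u-x^2_u)\,d(K^1_u-K^2_u).
\end{align*}
\end{small}
I would then split each $K^i$ into its $K^{i,r}-K^{i,l}$ parts and use the Skorokhod flat-off conditions in (iii) together with the strict monotonicity and two-sided Lipschitz bound in Assumption \ref{assumption-l-r}(ii)--(iii). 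The key sign analysis is that $dK^{i,l}$ charges only the set $\{l(u,x^i_u)=0\}$ and $dK^{i,r}$ only $\{r(u,x^i_u)=0\}$; combined with $l(u,x^1_u)\le 0\le r(u,x^1_u)$ for the other solution, the strict monotonicity of $l(u,\cdot),r(u,\cdot)$ forces each cross term $(x^1_u-x^2_u)\,dK^{j}_u$ to be nonpositive. Hence the right-hand side is $\le 0$, giving $|x^1_t-x^2_t|^2\le 0$, so $x^1=x^2$ and therefore $K^1=K^2$ as well.

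\textbf{Existence.} This is the harder part, and I expect the main obstacle to lie here, precisely because one must build a \emph{single} bounded-variation $K$ reconciling the two-sided constraint (iii), rather than reflecting at one boundary. Following the approach suggested by citing \cite{Li24,Li23}, I would construct $x$ by a Picard-type or discretization scheme. One route is to approximate via a penalization/one-sided Skorokhod iteration: alternately push $s$ up off the lower boundary $\{l(t,\cdot)=0\}$ and down off the upper boundary $\{r(t,\cdot)=0\}$, using the single-boundary Skorokhod map, and show the iterates form a Cauchy sequence in the sup norm. The two-sided Lipschitz bounds in (iii) and the uniform gap condition (iv), $\inf(r-l)>0$, are exactly what guarantee the alternating projections contract: condition (iv) prevents the two boundaries from pinching and keeps the total variation of $K$ finite and uniformly controlled, while (ii) ensures each projection is well-defined and single-valued. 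A cleaner alternative is to invert the constraints: since $l(t,\cdot)$ is a strictly increasing bi-Lipschitz bijection onto its range, one can reformulate the moving two-sided region into a standard reflection in a fixed interval after a change of variable, reducing to the classical double-barrier Skorokhod map, whose solvability and estimates are standard.

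\textbf{Closing the argument.} Having produced a candidate $x\in C[0,T]$ with $K:=x-s\in BV[0,T]$, I would verify the three defining properties directly: continuity and the constraint $l(t,x_t)\le 0\le r(t,x_t)$ pass to the limit by uniform convergence and the continuity in Assumption \ref{assumption-l-r}(i); the decomposition $K=K^r-K^l$ with the flat-off conditions follows by tracking that increments of $K^r$ (resp.\ $K^l$) occur only when $x$ touches the upper (resp.\ lower) boundary, which is preserved under the limit because the touching sets are closed and the measures $dK^{n,r},dK^{n,l}$ converge weakly with supports contained in the corresponding contact sets. The main technical care is ensuring the two increasing parts remain mutually singular in the limit and that no mass is lost; the gap condition (iv) is what rules out simultaneous contact with both boundaries and secures this.
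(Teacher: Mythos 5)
This theorem is imported by the paper from \cite{Li24} (ultimately \cite{Li23}) without proof, so there is no in-paper argument to compare against; I can only assess your proposal on its merits.

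Your uniqueness argument is correct and is the standard one. Since $x^1-x^2=K^1-K^2$ is continuous, of bounded variation and null at $0$, the identity $|x^1_t-x^2_t|^2=2\int_0^t(x^1_u-x^2_u)\,d(K^1_u-K^2_u)$ holds, and the sign analysis goes through exactly as you say: $dK^{i,l}$ is carried by $\{l(u,x^i_u)=0\}$ (combining the flat-off condition with the constraint $l(u,x^i_u)\le 0$), where strict monotonicity of $l(u,\cdot)$ forces $x^j_u\le x^i_u$ for the other solution, and symmetrically for $dK^{i,r}$; each of the four terms is nonpositive. This is complete.

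The existence part is the right idea but remains a sketch, and of your two proposed routes only the first is viable. The alternating one-sided Skorokhod iteration (push up off $\{l=0\}$, then down off $\{r=0\}$, and show convergence using the separation $\inf(r-l)>0$ together with the bi-Lipschitz bounds in Assumption \ref{assumption-l-r}(iii)) is essentially how this is done in \cite{Li23}; the real work, which you do not carry out, is the quantitative step showing that the gap condition controls the interaction of the two reflections (in the literature this is done by localizing to intervals on which the oscillation of the data is small relative to the gap, so that only one boundary is active, and then pasting). Your ``cleaner alternative'' via a change of variables does not work as stated: applying $l(t,\cdot)^{-1}$ destroys the additive structure $x_t=s_t+K_t$ (the image of $s+K$ under a nonlinear map is not of the form $\tilde s+\tilde K$ with $\tilde K$ of bounded variation playing the role of a reflection term), and the upper constraint transforms into a time-varying, not fixed, barrier, so one does not land in the classical double-barrier setting. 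Your closing remarks are sound, in particular the observation that condition (iv) rules out simultaneous contact with both boundaries and hence keeps $dK^r$ and $dK^l$ mutually singular in the limit.
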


The following proposition provides the continuity property of 
the solution to the Skorokhod problem with respect to input 
function $s$ and the reflecting boundary functions $l,r$.

\begin{theorem}[\cite{Li24}]\label{thm-SP-K1-K2}
Suppose that $(l^i,r^i)$ satisfy Assumption \ref{assumption-l-r}, $i=1,2$.
Given  $s^i\in C[0,T]$, let $(x^i,K^i)$ be the solution to the Skorokhod problem $\mathbb{SP}_{l^i}^{r^i}(s^i)$. Then, we have
\begin{small}
    \begin{equation*}%\label{diff-SP-k}
        \sup_{t\in[0,T]}\left|K^1_t-K^2_t\right|
        \leq \frac{C}{c}\sup_{t\in[0,T]}\left|s^1_t-s^2_t\right|+\frac{1}{c}(\bar{L}_T\vee\bar{R}_T),
        \end{equation*}
\end{small}where
\begin{small}
    \begin{align*}
        \bar{L}_T=\sup_{(t,x)\in[0,T]\times \mathbb{R}}\left|{l}^1(t,x)-{l}^2(t,x)\right|,\
        \bar{R}_T=\sup_{(t,x)\in[0,T]\times \mathbb{R}}\left|{r}^1(t,x)-{r}^2(t,x)\right|.
        \end{align*}
\end{small}
\end{theorem}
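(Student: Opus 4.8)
The plan is to prove Theorem \ref{thm-SP-K1-K2} as a purely pathwise ``push-back'' estimate, with no probability involved. First I would reformulate the constraints of Definition \ref{def-solution-SP} through the boundary curves. By Assumption \ref{assumption-l-r}(ii)--(iii), each map $l^i(t,\cdot)$ and $r^i(t,\cdot)$ is a strictly increasing bi-Lipschitz bijection, so I may set $\alpha^i_t:=(r^i(t,\cdot))^{-1}(0)$ and $\beta^i_t:=(l^i(t,\cdot))^{-1}(0)$. Condition (ii) then reads $\alpha^i_t\le x^i_t\le\beta^i_t$, while condition (iii) says precisely that the nondecreasing function $K^{i,r}$ can increase only on $\{x^i_t=\alpha^i_t\}$ (the lower boundary) and $K^{i,l}$ only on $\{x^i_t=\beta^i_t\}$ (the upper boundary), with $K^i=K^{i,r}-K^{i,l}$.

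Next I would record a boundary comparison using the \emph{lower} Lipschitz constant $c$. From $r^1(t,\alpha^1_t)=0=r^2(t,\alpha^2_t)$ I get $c\,|\alpha^1_t-\alpha^2_t|\le |r^2(t,\alpha^1_t)-r^2(t,\alpha^2_t)|=|r^2(t,\alpha^1_t)-r^1(t,\alpha^1_t)|\le \bar{R}_T$, hence $|\alpha^1_t-\alpha^2_t|\le \bar{R}_T/c$, and symmetrically $|\beta^1_t-\beta^2_t|\le \bar{L}_T/c$. Set $\rho:=(\bar{L}_T\vee\bar{R}_T)/c$ and write $\Delta x:=x^1-x^2$, $\Delta s:=s^1-s^2$, $\Delta K:=K^1-K^2$, so that $\Delta K=\Delta x-\Delta s$. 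The central claim is that the signed measure $d\Delta K=dK^{1,r}-dK^{1,l}-dK^{2,r}+dK^{2,l}$ satisfies $d\Delta K\le 0$ on $\{t:\Delta x_t>\rho\}$. Indeed, the only increasing contributions are $dK^{1,r}$ and $dK^{2,l}$, supported on $\{x^1=\alpha^1\}$ and $\{x^2=\beta^2\}$ respectively; at a point of the first set $\Delta x=\alpha^1_t-x^2_t\le \alpha^1_t-\alpha^2_t\le\rho$ (using $x^2_t\ge\alpha^2_t$), and at a point of the second $\Delta x=x^1_t-\beta^2_t\le \beta^1_t-\beta^2_t\le\rho$ (using $x^1_t\le\beta^1_t$). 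Thus both positive measures give zero mass to $\{\Delta x>\rho\}$, leaving $-dK^{1,l}-dK^{2,r}\le 0$ there.

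From the claim the estimate follows pointwise. Fix $t$; if $\Delta K_t\le 0$ there is nothing to prove, so assume $\Delta K_t>0$ and let $\tau:=\sup\{u\le t:\Delta x_u\le\rho\}$. This set is nonempty, for otherwise $\Delta x>\rho$ on all of $[0,t]$, forcing $\Delta K$ to be non-increasing there and hence $\Delta K_t\le\Delta K_0=0$, a contradiction (note $K^i_0=0$ since $K^i\in BV[0,T]$ starts at the origin). By continuity the set is closed, so $\Delta x_\tau\le\rho$; if $\tau=t$ then $\Delta K_t=\Delta x_t-\Delta s_t\le\rho+\sup_u|\Delta s_u|$, while if $\tau<t$ then $\Delta x>\rho$ on $(\tau,t]$, so $\Delta x_\tau=\rho$ and the claim gives $\Delta K_t\le \Delta K_\tau=\rho-\Delta s_\tau\le \rho+\sup_u|\Delta s_u|$. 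Either way $\Delta K_t\le \rho+\sup_u|\Delta s_u|$. Swapping the indices $1\leftrightarrow 2$ yields the same bound for $K^2-K^1$, so that $\sup_t|K^1_t-K^2_t|\le \tfrac1c(\bar{L}_T\vee\bar{R}_T)+\sup_t|s^1_t-s^2_t|$, which implies the asserted inequality since $C/c\ge 1$ (in fact the argument gives the sharper constant $1$ in front of $\sup_t|s^1_t-s^2_t|$).

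The hardest part will be the central claim of the second paragraph: correctly pairing each one-sided reflection term $K^{i,r},K^{i,l}$ with the boundary curve $\alpha^i,\beta^i$ it acts on, carrying out the sign bookkeeping in the signed measure $d\Delta K$, and making rigorous the measure-support argument that the two ``up-pushing'' terms charge no mass to $\{\Delta x>\rho\}$. Once this localization of the reflection is established, the boundary comparison via the lower Lipschitz constant and the backtracking to the last time $\tau$ where $\Delta x\le\rho$ are routine.
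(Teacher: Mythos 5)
The paper itself offers no proof of Theorem \ref{thm-SP-K1-K2}: it is imported verbatim from \cite{Li24}, so there is no in-paper argument to compare against line by line. Judged on its own, your pathwise proof is correct and complete. The reformulation of condition (ii) of Definition \ref{def-solution-SP} as $\alpha^i_t\le x^i_t\le\beta^i_t$ is legitimate because the lower Lipschitz bound in Assumption \ref{assumption-l-r}(iii) makes $r^i(t,\cdot)$ and $l^i(t,\cdot)$ strictly increasing bijections of $\mathbb{R}$, so $\alpha^i_t,\beta^i_t$ are well defined; the flat-off conditions do localize $dK^{i,r}$ on $\{x^i=\alpha^i\}$ and $dK^{i,l}$ on $\{x^i=\beta^i\}$; the boundary comparison $|\alpha^1_t-\alpha^2_t|\le\bar{R}_T/c$, $|\beta^1_t-\beta^2_t|\le\bar{L}_T/c$ is exactly where the lower constant $c$ enters; and the central claim that the two up-pushing measures $dK^{1,r}$, $dK^{2,l}$ charge no mass to $\{\Delta x>\rho\}$ is verified correctly, so $\Delta K$ is non-increasing there. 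The backtracking to $\tau=\sup\{u\le t:\Delta x_u\le\rho\}$ is sound (the set is closed and nonempty, $\Delta x_\tau=\rho$ when $\tau<t$, and $K^i_0=0$ handles the degenerate case), and the symmetric bound for $K^2-K^1$ closes the argument. Note that you actually prove the sharper inequality $\sup_t|K^1_t-K^2_t|\le\sup_t|s^1_t-s^2_t|+\frac{1}{c}(\bar{L}_T\vee\bar{R}_T)$, which implies the stated one since $C/c>1$; the factor $C/c$ in the quoted version appears to be an artifact of a looser route (comparing through $l(t,x^1_t)-l(t,x^2_t)$, which costs $C$ upstairs and $c$ downstairs) rather than something your direct boundary-curve comparison needs. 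This sharpening is consistent with the classical two-sided Skorokhod map on a fixed interval, whose regulator is $1$-Lipschitz in the supremum norm.
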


We give the definition of a solution to the backward Skorokhod problem.
\begin{definition}\label{def-solution-BSP}
    Let $s\in C[0,T]$, $a\in \mathbb{R}$ and 
    $l,r:[0,T]\times \mathbb{R}\rightarrow \mathbb{R}$ 
    be two functions such that $l\leq r$ and $l(T,a)\leq 0\leq r(T,a)$. 
    A pair of functions $(x,k)\in C[0,T]\times BV[0,T]$ is called a 
    solution of the backward Skorokhod problem for $s$ 
    with nonlinear constraints $l,r$ ($(x,k)=\mathbb{BSP}_l^r(s,a)$ for short) 
    if
    \begin{enumerate}
    \item[(i)] $x_t=a+s_T-s_t+k_T-k_t$;
    \item[(ii)] $l(t,x_t)\leq 0\leq r(t,x_t)$, $t\in[0,T]$;
    \item[(iii)] $k_{0}=0$ and $k$ has the decomposition $k=k^r-k^l$, 
    where $k^r,k^l\in I[0,T]$ satisfy 
    \begin{small}
        \begin{align*}%\label{kl-kr}
            \int_0^T l(s,x_s)dk^l_s=0, \ \int_0^T r(s,x_s)dk^r_s=0.
        \end{align*}
    \end{small}
    \end{enumerate}
\end{definition}

\begin{theorem}[\cite{Li24}]\label{thm-BSP}
    Let Assumption \ref{assumption-l-r} hold. For any given $s\in C[0,T]$ 
    and $a\in \mathbb{R}$ with $l(T,a)\leq 0\leq r(T,a)$, 
    there exists a unique solution to the backward Skorokhod 
    problem $(x,k)=\mathbb{BSP}_l^r(s,a)$.
\end{theorem}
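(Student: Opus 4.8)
The plan is to reduce the backward Skorokhod problem to the forward Skorokhod problem of Definition \ref{def-solution-SP}, whose well-posedness is already guaranteed by Theorem \ref{thm-SP}, by means of the time reversal $t \mapsto T-t$. Concretely, given the data $(s,a,l,r)$ of the backward problem, I would introduce the reversed input $\hat{s}_u := a + s_T - s_{T-u}$ and the reversed boundaries $\hat{l}(u,y) := l(T-u,y)$, $\hat{r}(u,y) := r(T-u,y)$ for $u \in [0,T]$. Since time reversal preserves continuity in $t$, strict monotonicity in $x$, the bi-Lipschitz bounds and the uniform gap condition, the pair $(\hat{l},\hat{r})$ again satisfies Assumption \ref{assumption-l-r}; moreover $\hat{s} \in C[0,T]$ with $\hat{s}_0 = a$, and the hypothesis $l(T,a) \le 0 \le r(T,a)$ is precisely the statement $\hat{l}(0,a) \le 0 \le \hat{r}(0,a)$, i.e. the reversed input already lies in the admissible region at the initial time, so no initial correction is forced.

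Next I would apply Theorem \ref{thm-SP} to obtain the unique forward solution $(\hat{x},\hat{K}) = \mathbb{SP}_{\hat{l}}^{\hat{r}}(\hat{s})$ with decomposition $\hat{K} = \hat{K}^r - \hat{K}^l$, and transport it back by setting $x_t := \hat{x}_{T-t}$ and defining $k$ through its reversed components $k^r_t := \hat{K}^r_T - \hat{K}^r_{T-t}$, $k^l_t := \hat{K}^l_T - \hat{K}^l_{T-t}$, which are again nondecreasing and vanish at $t=0$, so that $k_t = k^r_t - k^l_t = \hat{K}_T - \hat{K}_{T-t}$. A direct substitution shows that $\hat{x}_u = \hat{s}_u + \hat{K}_u$ unfolds into $x_t = a + s_T - s_t + k_T - k_t$, giving condition (i) of Definition \ref{def-solution-BSP}, and the constraint $\hat{l}(u,\hat{x}_u) \le 0 \le \hat{r}(u,\hat{x}_u)$ becomes $l(t,x_t) \le 0 \le r(t,x_t)$, giving (ii). For (iii) I would pass the forward flatness conditions through the change of variables $s = T-u$: the Lebesgue--Stieltjes measures $dk^l$ and $d\hat{K}^l$ (respectively $dk^r$ and $d\hat{K}^r$) correspond under this reflection, whence $\int_0^T l(s,x_s)\,dk^l_s = \int_0^T \hat{l}(u,\hat{x}_u)\,d\hat{K}^l_u$, and the forward condition $\int_0^T I_{\{\hat{l}<0\}}\,d\hat{K}^l = 0$ is equivalent to $\int_0^T \hat{l}(u,\hat{x}_u)\,d\hat{K}^l_u = 0$ because $\hat{l} \le 0$ along the path while $\hat{K}^l$ is nondecreasing; the same argument applies to $\hat{r}$ and $\hat{K}^r$.

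Uniqueness would follow from the same correspondence read in reverse: any backward solution $(x,k)$, when time-reversed as above, produces a forward solution of $\mathbb{SP}_{\hat{l}}^{\hat{r}}(\hat{s})$, and since Theorem \ref{thm-SP} asserts that the forward solution is unique, the backward solution is unique as well. In this direction one checks that a backward $BV$ path with $k_0 = 0$ reverses to a forward $BV$ path with $\hat{K}_0 = 0$, so the two solution classes are in genuine bijection.

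The step I expect to require the most care is the measure-theoretic bookkeeping in part (iii): justifying that the Lebesgue--Stieltjes measures $dk^l, dk^r$ transform correctly under the reflection $s \mapsto T-s$, and that the indicator form of the complementarity used in Definition \ref{def-solution-SP} coincides with the integral form used in Definition \ref{def-solution-BSP}. Closely tied to this is the endpoint analysis: I must verify that the hypothesis $l(T,a) \le 0 \le r(T,a)$ ensures the reversed forward problem requires no jump at $u = 0$, so that $\hat{K}$, and hence $k$, remains in $BV[0,T]$ with the correct boundary values and the constructed pair genuinely lies in $C[0,T] \times BV[0,T]$ rather than acquiring an atom at the terminal time.
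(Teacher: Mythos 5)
Your proposal is correct and is exactly the argument behind this result: the paper itself states the theorem as a citation of \cite{Li24}, but the time-reversal correspondence you construct (reversed input $a+s_T-s_{T-\cdot}$, reversed boundaries $l(T-\cdot,\cdot)$, $r(T-\cdot,\cdot)$, and $k_t=\hat{K}_T-\hat{K}_{T-t}$) is precisely the relation $A^U_t=\bar{A}^U_T-\bar{A}^U_{T-t}$ with the data \eqref{bar-slr} that the paper uses throughout, and it is the proof given in the cited reference. Your attention to the role of $l(T,a)\le 0\le r(T,a)$ in preventing an initial correction of the reversed forward problem, and to the equivalence of the indicator and integral forms of the flatness condition when $l\le 0\le r$ along the path, covers the only delicate points.
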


The following theorem reveals the continuous dependence 
of the solution to the backward Skorokhod problem with 
respect to the terminal value, the input function and the reflecting boundary functions.
    
\begin{theorem}[\cite{Li24}]\label{thm-BSP-K1-K2}
    Given $a^i\in\mathbb{R}$, $s^i\in C[0,T]$, $l^i,r^i$ satisfying 
    Assumption \ref{assumption-l-r} and $l^i(T,a^i)\leq 0\leq r^i(T,a^i)$, $i=1,2$,  
    let $(x^i,k^i)$ be the solution to the backward 
    Skorokhod problem $\mathbb{BSP}_{l^i}^{r^i}(s^i,a^i)$. Then, we have
    \begin{small}
        \begin{equation*}%\label{diff-BSP-K}
            \sup_{t\in[0,T]}|k^1_t-k^2_t|
            \leq  2\frac{C}{c}|a^1-a^2|+4\frac{C}{c}\sup_{t\in[0,T]}
            |s^1_t-s^2_t|+\frac{2}{c}(\bar{L}_T\vee\bar{R}_T),
        \end{equation*}
    \end{small}where
    \begin{small}
        \begin{align*}
            \bar{L}_T=\sup_{(t,x)\in[0,T]\times \mathbb{R}}|{l}^1(t,x)-{l}^2(t,x)|,\ 
            \bar{R}_T=\sup_{(t,x)\in[0,T]\times \mathbb{R}}|{r}^1(t,x)-{r}^2(t,x)|.
        \end{align*}
    \end{small}
\end{theorem}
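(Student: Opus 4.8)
The plan is to reduce the backward Skorokhod problem to the forward Skorokhod problem of Definition \ref{def-solution-SP} by a time reversal, and then to quote the stability estimate of Theorem \ref{thm-SP-K1-K2}. For each $i=1,2$ I would introduce the reversed data $\hat{x}^i_t:=x^i_{T-t}$, $\tilde{s}^i_t:=s^i_T-s^i_{T-t}$, $\tilde{k}^i_t:=k^i_T-k^i_{T-t}$, and the reversed boundaries $\hat{l}^i(t,x):=l^i(T-t,x)$, $\hat{r}^i(t,x):=r^i(T-t,x)$. Reading the defining identity $x^i_t=a^i+s^i_T-s^i_t+k^i_T-k^i_t$ at time $T-t$ gives $\hat{x}^i_t=a^i+\tilde{s}^i_t+\tilde{k}^i_t$ with $\tilde{s}^i_0=\tilde{k}^i_0=0$, which is exactly the form in Definition \ref{def-solution-SP} with input function $a^i+\tilde{s}^i$ and reflection $\tilde{k}^i$.

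I would then verify that $(\hat{x}^i,\tilde{k}^i)$ solves $\mathbb{SP}_{\hat{l}^i}^{\hat{r}^i}(a^i+\tilde{s}^i)$. The barrier constraint is immediate, since $\hat{l}^i(t,\hat{x}^i_t)=l^i(T-t,x^i_{T-t})\leq0\leq r^i(T-t,x^i_{T-t})=\hat{r}^i(t,\hat{x}^i_t)$. Setting $\tilde{k}^{i,r}_t:=k^{i,r}_T-k^{i,r}_{T-t}$ and $\tilde{k}^{i,l}_t:=k^{i,l}_T-k^{i,l}_{T-t}$ produces nondecreasing functions in $I[0,T]$ with $\tilde{k}^i=\tilde{k}^{i,r}-\tilde{k}^{i,l}$. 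The only delicate point is the flatness condition: under the reflection $t\mapsto T-t$ the Stieltjes measure $d\tilde{k}^{i,l}$ is the image of $dk^{i,l}$, so the change of variable gives $\int_0^T\hat{l}^i(t,\hat{x}^i_t)\,d\tilde{k}^{i,l}_t=\int_0^Tl^i(u,x^i_u)\,dk^{i,l}_u=0$, and likewise for the $r$-part; because $l^i\leq0\leq r^i$ holds along the trajectory, this integral form is equivalent to the indicator flatness condition of Definition \ref{def-solution-SP}. I would also note that $\hat{l}^i,\hat{r}^i$ inherit Assumption \ref{assumption-l-r} unchanged, each of its clauses being either pointwise in $t$ or a supremum over $t$, both invariant under $t\mapsto T-t$. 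By the uniqueness part of Theorem \ref{thm-SP}, $(\hat{x}^i,\tilde{k}^i)$ must coincide with the forward solution, so that Theorem \ref{thm-SP-K1-K2} is applicable.

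Applying Theorem \ref{thm-SP-K1-K2} to the two forward problems yields
\begin{small}
\begin{equation*}
\sup_{t\in[0,T]}|\tilde{k}^1_t-\tilde{k}^2_t|\leq\frac{C}{c}\sup_{t\in[0,T]}\bigl|(a^1+\tilde{s}^1_t)-(a^2+\tilde{s}^2_t)\bigr|+\frac{1}{c}(\bar{L}_T\vee\bar{R}_T),
\end{equation*}
\end{small}where I use that $\sup_{(t,x)}|\hat{l}^1-\hat{l}^2|=\bar{L}_T$ and $\sup_{(t,x)}|\hat{r}^1-\hat{r}^2|=\bar{R}_T$ by the same reflection invariance. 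Since $|(a^1+\tilde{s}^1_t)-(a^2+\tilde{s}^2_t)|\leq|a^1-a^2|+|s^1_T-s^2_T|+|s^1_{T-t}-s^2_{T-t}|\leq|a^1-a^2|+2\sup_t|s^1_t-s^2_t|$, the right-hand side is controlled by the target quantities.

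It remains to transfer the bound from $\tilde{k}$ to $k$. From $k^i_u=k^i_T-\tilde{k}^i_{T-u}$ together with $k^i_T=\tilde{k}^i_T$ (using $k^i_0=0$) I get $k^1_u-k^2_u=(\tilde{k}^1_T-\tilde{k}^2_T)-(\tilde{k}^1_{T-u}-\tilde{k}^2_{T-u})$, so $\sup_u|k^1_u-k^2_u|\leq2\sup_t|\tilde{k}^1_t-\tilde{k}^2_t|$. Combining the two estimates reproduces exactly $2\frac{C}{c}|a^1-a^2|+4\frac{C}{c}\sup_t|s^1_t-s^2_t|+\frac{2}{c}(\bar{L}_T\vee\bar{R}_T)$; the two factors of $2$ arise, respectively, from passing between $k$ and $\tilde{k}$ and between $s$ and $\tilde{s}$. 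The step I expect to be the main obstacle is not any individual estimate but the careful bookkeeping of the time reversal, namely checking that the monotone parts $k^{r},k^{l}$ reverse correctly into elements of $I[0,T]$ and that the flatness conditions survive the change of variable in the Stieltjes integrals, since a slip there would invalidate the reduction to Theorem \ref{thm-SP-K1-K2}.
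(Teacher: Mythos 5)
Your proposal is correct. The paper itself imports this estimate from \cite{Li24} without reproving it, but your time-reversal reduction is exactly the dictionary between $\mathbb{BSP}_l^r(s,a)$ and $\mathbb{SP}_{\bar l}^{\bar r}(\bar s)$ that the paper (and \cite{Li24}) relies on elsewhere — compare the identities $A^U_t=\bar A^U_T-\bar A^U_{T-t}$ and \eqref{bar-slr} in the proof of Lemma \ref{lemma-MFGBSDE-Y^U} — and the bookkeeping all checks out: the reversed pair solves the forward problem with input $a^i+\tilde s^i$ (the flatness conditions transfer under the pushforward of the Stieltjes measures, and the integral and indicator forms of flatness are equivalent because $l\le 0\le r$ along the trajectory), Theorem \ref{thm-SP-K1-K2} applies by the uniqueness in Theorem \ref{thm-SP}, and the two factors of $2$ from $|k^1_u-k^2_u|\le 2\sup_t|\tilde k^1_t-\tilde k^2_t|$ and $|\tilde s^1_t-\tilde s^2_t|\le 2\sup_t|s^1_t-s^2_t|$ reproduce the stated constants exactly.
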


\section{Quadratic $G$-BSDE with double constraints}\label{sec3}
In this section, we investigate $G$-BSDEs with double mean reflections,
where the generator has quadratic growth in $z$.

Specifically, $G$-BSDE with double mean reflections is given by 
\begin{small}
    \begin{equation}\label{DMFGBSDE}
        \begin{cases}
            Y_t=\xi+\int_t^T f(s,Y_s,Z_s)ds+\int_t^T g(s,Y_s,Z_s)d\langle B\rangle_s-\int_t^T Z_s dB_s-(K_T-K_t)+(A_T-A_t), \\
            \widehat{\mathbb{E}}[L(t,Y_t)]\leq 0\leq \widehat{\mathbb{E}}[R(t,Y_t)], \\
            A_t=A^R_t-A^L_t \textrm{ and } \int_0^T \widehat{\mathbb{E}}[R(t,Y_t)]dA_t^R=\int_0^T \widehat{\mathbb{E}}[L(t,Y_t)]dA^L_t=0.
        \end{cases}
    \end{equation}
\end{small}We impose the following assumptions on $L,R$:
\begin{assumption}\label{assumption-L-R}
    The running loss functions $L,R:\Omega_T\times [0,T]\times\mathbb{R}\rightarrow \mathbb{R}$ satisfy the following conditions:
    \begin{enumerate}
    \item[(i)] $(t,x)\rightarrow L(\omega,t,x)$, $(t,x)\rightarrow R(\omega,t,x)$ are uniformly continuous, uniform in $\omega$;
    \item[(ii)]  for any fixed $(t,x)\in [0,T]\times \mathbb{R}$, $L(t,x),R(t,x)\in L_G^1(\Omega_T)$;
    \item[(iii)] for any fixed $(\omega,t)\in \Omega_T\times [0,T]$, $L(\omega,t,\cdot),R(\omega,t,\cdot)$ are strictly increasing and there exists two constants $0<c<C<\infty$ such that for any $x,y\in \mathbb{R}$,
    \begin{small}
        \begin{align*}
            &c|x-y|\leq |L(\omega,t,x)-L(\omega,t,y)|\leq C|x-y|,\\
            &c|x-y|\leq |R(\omega,t,x)-R(\omega,t,y)|\leq C|x-y|;
            \end{align*}
    \end{small}
    \item[(iv)] for any fixed $(\omega,t)\in \Omega_T\times [0,T]$, there exists a constant $M\geq 0$ such that
    \begin{small}
        \begin{align*}
            |L(t,y)|\leq M(1+|y|), \ |R(t,y)|\leq M(1+|y|);
            \end{align*}
    \end{small}
    \item[(v)] $\inf_{(\omega,t,x)\in \Omega_T\times [0,T]\times\mathbb{R}} (R(\omega,t,x)-L(\omega,t,x))>0$.
    \end{enumerate}
\end{assumption}
We will analyze the cases of bounded and unbounded terminal conditions separately. 
For the first case, we use the $G-\text{BMO}$ martingale theory and a 
fixed-point argument. For the second case, we need to apply 
the $\theta$-method and introduce either convexity or concavity 
on the generator.

\subsection{Bounded terminal condition}
The goal of this part is to establish the existence and uniqueness of solution to 
the doubly mean reflected $G$-BSDE \eqref{DMFGBSDE} with quadratic 
generators and bounded terminal condition.  
In order to do this, we make the following assumptions.
\begin{assumption}\label{assumption-BT}
    Assume that the following hold:
    \begin{enumerate}
        \item[(i)] The terminal condition $\xi\in L_G^{\infty}(\Omega_T)$ with $\widehat{\mathbb{E}}[L(T,\xi)]\leq 0\leq \widehat{\mathbb{E}}[R(T,\xi)]$;
        \item[(ii)] $\small \int_0^T|f(t,\omega,0,0)|^2dt+\int_0^T|g(t,\omega,0,0)|^2dt +|\xi(\omega)|\leq M_0$, q.s.;
        \item[(iii)] The generator $(f,g)$ is uniformly continuous in $(t,\omega)$, i.e. there is a non-decreasing continuous function $w: [0, +\infty)\to [0, +\infty)$ such that $w(0)=0$ and
        \begin{small}
            \begin{align*}
                &\sup_{y,z\in \mathbb{R}}|f(t,\omega,y,z)-f(t^\prime,\omega^\prime,y,z)|\leq w(|t-t^\prime|+\|\omega-\omega^\prime\|_\infty),\\
                &\sup_{y,z\in \mathbb{R}}|g(t,\omega,y,z)-g(t^\prime,\omega^\prime,y,z)|\leq w(|t-t^\prime|+\|\omega-\omega^\prime\|_\infty);
            \end{align*}
        \end{small}
        \item[(iv)] There are two positive constants $L_1$ and $L_2$ such that for each $(t,\omega)\in [0,T]\times\Omega,$
        \begin{small}
            \begin{align*}
                &|f(t,\omega,y,z)-f(t,\omega,y^\prime,z^\prime)|+|g(t,\omega,y,z)-g(t,\omega,y^\prime,z^\prime)|\\
                &\leq L_1|y-y^\prime|+L_2(1+|z|+|z^\prime|)|z-z^\prime|.
            \end{align*}
        \end{small}
    \end{enumerate}
\end{assumption}

Without loss of generality, we assume $f\equiv0$ in \eqref{DMFGBSDE} for simplicity.
For given $U\in S_G^\infty$, consider the following type $G$-BSDE:
\begin{small}
    \begin{align}\label{MFGBSDE-Y^U}
        \begin{cases}
            Y^U_t=\xi+\int_t^T g\left(s,U_{s},Z^U_{s}\right)d\langle B\rangle_s-\int_t^T Z^U_s dB_s-\left(K^U_T-K^U_t\right)+\left(A^U_T-A^U_t\right),\ t\in[0,T],\\
            \widehat{\mathbb{E}}\left[L(t,Y^U_t)\right]\leq 0 \leq \widehat{\mathbb{E}}\left[R(t,Y^U_t)\right], \ A^U_t=A_t^{U,R}-A_t^{U,L},\ A_t^{U,R},A_t^{U,L}\in I[0,T],  \\
            \int_{0}^{T}\widehat{\mathbb{E}}\left[R(t,Y^U_t)\right]dA_t^{U,R}=\int_{0}^{T}\widehat{\mathbb{E}}\left[L(t,Y^U_t)\right]dA_t^{U,L}=0.
        \end{cases}
    \end{align}
\end{small}
\begin{lemma}\label{lemma-MFGBSDE-Y^U}
    Suppose that Assumptions \ref{assumption-L-R} and \ref{assumption-BT} hold.
    Then, the quadratic $G$-BSDE \eqref{MFGBSDE-Y^U} with double mean reflections 
    admits a unique solution 
    $(Y,Z,K,A)\in S_G^{\infty}\times\text{BMO}_G\times \cap_{\alpha\ge2}S_G^\alpha\times \mathcal{A}_D$, 
    where $\mathcal{A}_D$ is the collection of deterministic, non-decreasing 
    and continuous processes $(R_t)_{0\leq t\leq T}$ with $R_0 =0$.
\end{lemma}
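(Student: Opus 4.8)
The plan is to decouple the reflection from the quadratic dynamics, exploiting that the generator in \eqref{MFGBSDE-Y^U} does not depend on the solution $Y^U$ and that $A$ is required to be deterministic. First I would discard the reflection and solve the standard quadratic $G$-BSDE
\begin{small}
\begin{equation*}
\bar{Y}_t=\xi+\int_t^T g(s,U_s,Z_s)\,d\langle B\rangle_s-\int_t^T Z_s\,dB_s-(K_T-K_t),\qquad t\in[0,T].
\end{equation*}
\end{small}
Because $U\in S_G^\infty$ is bounded, Assumption \ref{assumption-BT} ensures that $z\mapsto g(s,U_s,z)$ has quadratic growth and bounded data (from (iv), $|g(s,U_s,0)|\le |g(s,0,0)|+L_1|U_s|$, so $\int_0^T|g(s,U_s,0)|^2ds$ is bounded q.s.), whence the well-posedness theory for quadratic $G$-BSDEs with bounded terminal condition (\cite{HLS18,CT20}) yields a unique triple $(\bar{Y},Z,K)\in S_G^\infty\times\text{BMO}_G\times\cap_{\alpha\ge2}S_G^\alpha$.

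Next I would recast the double mean reflection as a deterministic backward Skorokhod problem. Since a deterministic shift changes neither $Z$ nor $K$, setting $Y_t:=\bar{Y}_t+(A_T-A_t)$ makes the first line of \eqref{MFGBSDE-Y^U} automatic, so it remains only to choose the deterministic $A$. Introduce
\begin{small}
\begin{equation*}
\tilde{L}(t,v):=\widehat{\mathbb{E}}[L(t,\bar{Y}_t+v)],\qquad \tilde{R}(t,v):=\widehat{\mathbb{E}}[R(t,\bar{Y}_t+v)],
\end{equation*}
\end{small}
so that, writing $a_t:=A_T-A_t$, the constraint $\widehat{\mathbb{E}}[L(t,Y_t)]\le0\le\widehat{\mathbb{E}}[R(t,Y_t)]$ reads $\tilde{L}(t,a_t)\le0\le\tilde{R}(t,a_t)$ and the Skorokhod conditions become $\int_0^T\tilde{R}(t,a_t)\,dA_t^R=\int_0^T\tilde{L}(t,a_t)\,dA_t^L=0$. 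The crux is to verify that $(\tilde{L},\tilde{R})$ satisfies Assumption \ref{assumption-l-r}: strict monotonicity and the two-sided Lipschitz bound in $v$ follow from Assumption \ref{assumption-L-R}(iii) with the monotonicity and sublinearity of $\widehat{\mathbb{E}}$ (for $v>v'$ one has $L(t,\bar{Y}_t+v)\ge L(t,\bar{Y}_t+v')+c(v-v')$, hence $\tilde{L}(t,v)\ge\tilde{L}(t,v')+c(v-v')$, while the upper bound uses $|\widehat{\mathbb{E}}[X]-\widehat{\mathbb{E}}[Y]|\le\widehat{\mathbb{E}}[|X-Y|]$), and the uniform gap $\tilde{R}-\tilde{L}\ge\inf(R-L)>0$ is Assumption \ref{assumption-L-R}(v). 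Taking input $s\equiv0$ and terminal value $a=0$, the admissibility $\tilde{L}(T,0)=\widehat{\mathbb{E}}[L(T,\xi)]\le0\le\widehat{\mathbb{E}}[R(T,\xi)]=\tilde{R}(T,0)$ is precisely Assumption \ref{assumption-BT}(i) since $\bar{Y}_T=\xi$; Theorem \ref{thm-BSP} then produces a unique $(x,k)=\mathbb{BSP}_{\tilde{L}}^{\tilde{R}}(0,0)$. Setting $A:=k=k^r-k^l$ with $A^R:=k^r$, $A^L:=k^l$ and $a_t=x_t=A_T-A_t$ supplies the required deterministic, continuous, bounded-variation reflecting process, so that $Y=\bar{Y}+a\in S_G^\infty$ and $(Y,Z,K,A)$ lies in the asserted spaces and solves \eqref{MFGBSDE-Y^U} by construction.

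For uniqueness, given two solutions $(Y,Z,K,A)$ and $(Y',Z',K',A')$, subtracting the deterministic shifts shows that $\bar{Y}_t:=Y_t-(A_T-A_t)$ and $\bar{Y}'_t:=Y'_t-(A'_T-A'_t)$ both solve the \emph{same} unreflected quadratic $G$-BSDE; the comparison argument (Theorem 5.1 in \cite{CT20}) forces $\bar{Y}=\bar{Y}'$, and then $Z=Z'$, $K=K'$ by identifying the martingale and finite-variation parts. Consequently $\tilde{L},\tilde{R}$ coincide for both solutions, so $A_T-A_t$ and $A'_T-A'_t$ solve the same backward Skorokhod problem $\mathbb{BSP}_{\tilde{L}}^{\tilde{R}}(0,0)$; the uniqueness in Theorem \ref{thm-BSP}, together with $A_0=A'_0=0$, yields $A=A'$ and hence $Y=Y'$.

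The step I expect to be the main obstacle is the temporal regularity in Assumption \ref{assumption-l-r}(i), namely the continuity of $t\mapsto\tilde{L}(t,v)$ and $t\mapsto\tilde{R}(t,v)$, which does not follow from the pointwise algebraic estimates above. I would instead derive it from the path continuity of $\bar{Y}\in S_G^\infty$ and the uniform continuity of $L,R$ in $(t,x)$ (Assumption \ref{assumption-L-R}(i)): one bounds $|\tilde{L}(t,v)-\tilde{L}(t',v)|\le\widehat{\mathbb{E}}[w_L(|t-t'|+|\bar{Y}_t-\bar{Y}_{t'}|)]$, with $w_L$ a modulus of continuity for $L$, and passes to the limit using the boundedness and continuity of $\bar{Y}$ together with the convergence behaviour of $\widehat{\mathbb{E}}$ (Theorem \ref{monotone-con-thm}). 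Additional care is needed to confirm that $A$ inherits the regularity claimed in $\mathcal{A}_D$ and that the complementarity integrals transfer faithfully between the Skorokhod formulation and \eqref{MFGBSDE-Y^U}.
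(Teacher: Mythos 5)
Your proposal is correct and follows essentially the same route as the paper: solve the unreflected quadratic $G$-BSDE via \cite{CT20,HLS18}, build the deterministic reflector through the backward Skorokhod problem of Theorem \ref{thm-BSP}, and obtain uniqueness from the uniqueness of the unreflected equation together with that of the Skorokhod problem. The only (harmless) difference is parametrization: the paper feeds the deterministic mean $t\mapsto\widehat{\mathbb{E}}[y^U_t]$ into the input function $s^U$ and centers the boundaries as $l^U(t,x)=\widehat{\mathbb{E}}[L(t,y^U_t-\widehat{\mathbb{E}}[y^U_t]+x)]$ (a choice that pays off later in the contraction estimates via Theorem \ref{thm-SP-K1-K2}), whereas you take $s\equiv 0$, $a=0$ and absorb all of $\bar Y$ into the boundary functions — the two formulations are related by a time-dependent shift and yield the same reflecting process $A$.
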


\begin{proof}
    We first show the uniqueness. Let $(Y^i,Z^i,K^i,A^i),i=1,2$ be two solutions 
    of \eqref{MFGBSDE-Y^U}. By Theorem 5.1 in \cite{CT20} (with $S=-\infty$),  
    $(Y^i-(A^i_T-A^i),Z^i,K^i)$ 
    can be seen as the solution to the $G$-BSDE with data $(\xi,\{g(s,U_s,z)\}_{s\in[0,T]}),$
    then we have $Y^1-(A^1_T-A^1)\equiv Y^2-(A^2_T-A^2)$, 
    $Z^1\equiv Z^2(=:z)$ and $K^1\equiv K^2$. The  uniqueness of $A^1\equiv A^2$
    follows from the same argument as Proposition 3.4 in \cite{HL24} (setting $C_s=0$ and $D_s=g(s,U_s,Z_s)$),
    so we omit it. 

    Turn to the existence, by Theorem 5.1 in \cite{CT20} (with $S=-\infty$), the following $G$-BSDE
    \begin{small}
        \begin{align*}
            y^U_t=\xi+\int_t^T g(s,U_s,z^U_s)d\langle B\rangle_s-\int_t^T z^U_s dB_s-(k_T^U-k_t^U)
        \end{align*}
    \end{small}has a unique solution $(y^U,z^U,k^U)\in S_G^\infty\times \text{BMO}_G\times \cap_{\alpha\ge2}S_G^\alpha$.
    Define 
    \begin{small}
        \begin{align}\label{slr}
            &s^U_t=\widehat{\mathbb{E}}\left[\xi+\int_0^T g(s,U_s,z^U_s) d\langle B\rangle_s\right]-
            \widehat{\mathbb{E}}\left[\xi+\int_t^T g(s,U_s,z^U_s) d\langle B\rangle_s\right]=\widehat{\mathbb{E}}\left[y^U_0\right]-\widehat{\mathbb{E}}\left[y^U_t\right], \ a=\widehat{\mathbb{E}}\left[\xi\right],\notag\\
            &l^U(t,x):=\widehat{\mathbb{E}}\left[L(t,y^U_t-\widehat{\mathbb{E}}\left[y^U_t\right]+x)\right], \ r^U(t,x):=\widehat{\mathbb{E}}\left[R(t,y^U_t-\widehat{\mathbb{E}}\left[y^U_t\right]+x)\right].
        \end{align}
    \end{small}By Lemma 3.3 in \cite{HL24}, we can check that $l^U,r^U$ satisfy Assumption \ref{assumption-l-r} and
    the following hold:
    \begin{small}
        \begin{align*}
            l^U(T,a)=\widehat{\mathbb{E}}\left[L(T,\xi)\right]\leq 0\leq \widehat{\mathbb{E}}\left[R(T,\xi)\right]=r^U(T,a).
        \end{align*}
    \end{small}By Theorem \ref{thm-BSP}, there exists a unique $(x^U,A^U)$ to solve backward Skorokhod problem 
    $\mathbb{BSP}_{l^U}^{r^U}(s^U,a)$ with $A^U_t=\bar{A}^U_T-\bar{A}^U_{T-t}$, 
    where $\bar{A}^U$ is the second component of the solution to 
    the Skorokhod problem $\mathbb{SP}_{\bar{l}^U}^{\bar{r}^U}(\bar{s}^U)$, and 
    \begin{small}
        \begin{equation}\label{bar-slr}
            \bar{s}^U_{t}=a+s^U_{T}-s^U_{T-t}=\widehat{\mathbb{E}}[y^U_{T-t}],\ \bar{l}^U(t,x)=l^U (T-t,x), \ \bar{r}^U(t,x)=r^U (T-t,x).
        \end{equation}
    \end{small}Then, define 
    \begin{small}
        \begin{align}\label{Y-Z-K-A-U}
            Y^U_t=y^U_t+A^U_T-A^U_t=y^U_t+\bar{A}^U_{T-t}, \ Z^U_t=z^U_t, \ K_t^U=k_t^U, \ t\in[0,T].
        \end{align}
    \end{small}Finally, we can verify that the Skorokhod conditions are satisfied by similar analysis as 
    Proposition 3.4 in \cite{HL24}.
    Hence, $(Y^U,Z^U,K^U,A^U)$ is a unique solution of equation \eqref{MFGBSDE-Y^U}.
\end{proof}
\begin{remark}
    Relation \eqref{Y-Z-K-A-U} suggests that 
    the solution of double mean-reflected $G$-BSDE can be represented as the sum of a solution to standard $G$-BSDE and 
    a solution to the Skorokhod problem. This plays a key role in the subsequent proof of uniqueness and also helps us 
    to estimate double mean-reflected $G$-BSDE by the existing estimates of standard $G$-BSDE and the Skorokhod
    problem.  
\end{remark}

\begin{theorem}\label{thm-BT}
    Suppose that Assumptions \ref{assumption-L-R} and \ref{assumption-BT} hold.
    Then, the quadratic $G$-BSDE \eqref{DMFGBSDE} with double mean reflections 
    admits a unique solution $(Y,Z,K,A)\in S_G^{\infty}\times\text{BMO}_G\times \cap_{\alpha\ge2}S_G^\alpha\times \mathcal{A}_D$.
\end{theorem}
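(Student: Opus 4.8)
The plan is to solve \eqref{DMFGBSDE} (recall $f\equiv 0$ has been assumed without loss of generality) by a fixed-point argument built on the frozen-coefficient problem of Lemma \ref{lemma-MFGBSDE-Y^U}. Define the map $\Gamma:S_G^\infty\to S_G^\infty$ by $\Gamma(U):=Y^U$, where $(Y^U,Z^U,K^U,A^U)$ is the unique solution of \eqref{MFGBSDE-Y^U}. Any fixed point $U=\Gamma(U)$ yields $Y:=U$ together with $(Z,K,A):=(Z^U,K^U,A^U)$ solving \eqref{DMFGBSDE}, and conversely the $Y$-component of any solution is a fixed point of $\Gamma$. Thus the theorem reduces to showing that $\Gamma$ has a unique fixed point. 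The preliminary ingredient is a uniform a priori bound: by the quadratic $G$-BSDE estimates of \cite{HLS18,CT20}, the component $z^U$ of the auxiliary (non-reflected) $G$-BSDE appearing in Lemma \ref{lemma-MFGBSDE-Y^U} satisfies $\|z^U\|_{\mathrm{BMO}_G}\le \Lambda$ for a constant $\Lambda$ depending only on $\bar\sigma,\underline\sigma,L_1,L_2,M_0$, uniformly in $U$; restricting to a short interval $[T-h,T]$ makes this norm as small as needed, in particular below the threshold $\phi(q)$ of the reverse H\"older inequality (Lemma \ref{reverse holder}) for a single fixed order $q$, so that the change of $G$-expectation below is legitimate for all $U$ at once.

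For the local contraction, take $U^1,U^2\in S_G^\infty$ and write $Y^i=\Gamma(U^i)$. Using the representation \eqref{Y-Z-K-A-U}, decompose $Y^i=y^i+(A^i_T-A^i_\cdot)$. For the unreflected parts, linearize $g(s,U^1_s,z^1_s)-g(s,U^2_s,z^2_s)=\alpha_s+b_s(z^1_s-z^2_s)$, where $|\alpha_s|\le L_1|U^1_s-U^2_s|$ and $|b_s|\le L_2(1+|z^1_s|+|z^2_s|)$, so $b\in\mathrm{BMO}_G$ with small norm on $[T-h,T]$. Introduce the new $G$-expectation $\tilde{\mathbb{E}}$ generated by $\mathcal{E}(b)$; by Lemmas \ref{lemma-Gisr-1} and \ref{lemma-Gisr-2}, $B-\int b\,d\langle B\rangle$ is a $G$-Brownian motion under $\tilde{\mathbb{E}}$ and the decreasing $G$-martingales $k^i$ remain decreasing $G$-martingales, so conditioning the difference equation under $\tilde{\mathbb{E}}_t$ kills the symmetric integral and the $k^i$-contributions and leaves only the $\alpha$-drift, giving $\|y^1-y^2\|_{S_G^\infty}\le C\bar\sigma^2 L_1\,h\,\|U^1-U^2\|_{S_G^\infty}$. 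For the reflection parts, observe that the data \eqref{slr}, \eqref{bar-slr} of the two backward Skorokhod problems share the same terminal constant $a=\widehat{\mathbb{E}}[\xi]$ and differ only through $y^i$; hence the inputs $\bar s^i$ and boundaries $\bar l^i,\bar r^i$ differ by at most $C\|y^1-y^2\|_{S_G^\infty}$, and Theorem \ref{thm-SP-K1-K2} together with $A^i_t=\bar A^i_{T-t}$ yields $\sup_{t}|A^1_t-A^2_t|\le C\|y^1-y^2\|_{S_G^\infty}$. Combining the two estimates produces $\|Y^1-Y^2\|_{S_G^\infty}\le C h\,\|U^1-U^2\|_{S_G^\infty}$, so for $h$ small enough $\Gamma$ is a contraction on $S_G^\infty(T-h,T)$, and the Banach fixed-point theorem gives a unique local solution whose regularity is inherited from Lemma \ref{lemma-MFGBSDE-Y^U}.

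For globalization, partition $[0,T]$ into $0=t_0<\cdots<t_n=T$ with mesh at most $h$ and solve backward: first on $[t_{n-1},T]$ with terminal data $\xi$, then on each $[t_{i-1},t_i]$ with terminal data $Y_{t_i}$ inherited from the already-constructed piece. The compatibility condition $\widehat{\mathbb{E}}[L(t_i,Y_{t_i})]\le 0\le\widehat{\mathbb{E}}[R(t_i,Y_{t_i})]$ required to launch the backward Skorokhod problem (Theorem \ref{thm-BSP}) on each subinterval holds automatically, since it is exactly the mean-reflection constraint satisfied by the previous piece at its left endpoint. Concatenating the pieces and checking continuity of $A$ at the nodes $t_i$ yields a global quadruple in $S_G^{\infty}\times\mathrm{BMO}_G\times\cap_{\alpha\ge2}S_G^\alpha\times\mathcal{A}_D$, and uniqueness propagates backward interval by interval from the local uniqueness of the fixed point.

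I expect the contraction estimate to be the main obstacle. The difficulty is twofold: one must secure a uniform $G$-BMO bound on the $Z$-components so that a single reverse H\"older order $q$ and a single Girsanov change of $G$-expectation are simultaneously valid for all $U$, and one must then combine the Girsanov-linearized bound for $y^1-y^2$ with the Skorokhod continuity bound for $A^1-A^2$ so that \emph{both} contribute a factor proportional to the interval length $h$ (the term $\tfrac{1}{c}(\bar L_T\vee\bar R_T)$ in Theorem \ref{thm-SP-K1-K2} must itself be controlled through $\|y^1-y^2\|_{S_G^\infty}$). A secondary delicacy is the stitching step, where one must verify that the deterministic, non-decreasing nature of $A$ and the mean-reflection constraints are preserved across the junction points $t_i$.
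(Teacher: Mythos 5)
Your proposal is correct and follows essentially the same route as the paper: the same fixed-point map $\Gamma(U)=Y^U$ built on Lemma \ref{lemma-MFGBSDE-Y^U}, the same Girsanov linearization of $y^1-y^2$ under the new $G$-expectation $\tilde{\mathbb{E}}$ generated by $\mathcal{E}(\hat b)$, the same control of $A^1-A^2$ via the Skorokhod continuity estimate of Theorem \ref{thm-SP-K1-K2} (with the boundary terms $\bar L_T\vee\bar R_T$ absorbed into $\sup_t\widehat{\mathbb{E}}[|\hat y_t|]$ exactly as you anticipate), and the same backward-induction stitching over a partition of mesh $\delta$. The only cosmetic difference is that you make the uniform $\mathrm{BMO}_G$ bound and the single reverse H\"older order explicit, which the paper leaves implicit.
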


\begin{proof} 
    We first consider the case that $T\leq \delta$, where $\delta$ is a constant
    small enough to be determined later.
    $M$ will denote a positive constant only depending on 
    some parameters and may change from line to line.
    The proof will be divided into the following two steps.

    {\bf Step 1: Contraction Mapping on Small Intervals.} For $U^i\in S_G^\infty,i=1,2$, we define a mapping 
    $\Gamma:S_G^\infty\rightarrow S_G^\infty$ as follows
    \begin{small}
        \begin{equation*}
            \Gamma(U^i):=Y^{U,i},\ i=1,2,
            \end{equation*}
    \end{small}where $Y^{U,i}$ is the first component of the solution to \eqref{MFGBSDE-Y^U}.

    For each $U^i\in S_G^\infty$, by \eqref{Y-Z-K-A-U}, we have
    \begin{small}
        \begin{align*}
            (Y^{U,i}_t,Z^{U,i}_t,K^{U,i}_t)=(y_t^i+(A^{U,i}_T-A^{U,i}_t),z_t^i,k_t^i)
        \end{align*}
    \end{small}where $(y_t^i,z_t^i,k_t^i)$ is the solution to the following $G$-BSDE:
    \begin{small}
        \begin{align*}
            y_t^i=\xi+\int_t^T g(s,U_s^i, z^i_s)d\langle B\rangle_s-\int_t^T z^i_s dB_s-(k_T^i-k_t^i).
        \end{align*} 
    \end{small}Denote $\hat{\psi}_t=\psi_t^1-\psi_t^2,\psi=y,z,k,U$. For each $t \in[0, T]$,
    define
    \begin{small}
        \begin{equation*}
            \hat{b}_t=\frac{g\left(t, U_t^1,z_t^1\right)-g\left(t, U_t^1,z_t^2\right)}{|\hat{z}_t|^2}\hat{z}_t \mathbf{1}_{\left\{|\hat{z}_t| \neq 0\right\}} , 
        \end{equation*}
    \end{small}then
    \begin{small}
        \begin{align}\label{hat-y-1}
            \hat{y}_t=  \int_t^T\left(\hat{b}_s \hat{z}_s+g\left(s,U_s^1, z_s^2\right)
            -g\left(s,U_s^2, z_s^2\right)\right) d\langle B\rangle_s 
             -\int_t^T \hat{z}_s d B_s-\int_t^Tdk_s^1+\int_t^Tdk_s^2.
        \end{align}
    \end{small}Using Assumption \ref{assumption-BT}, it is easy to check that $\hat{b}\in\text{BMO}_G$ and 
    $|\hat{b}_s|\le L_2(1+|z_s^1|+|z_s^2|)$.
    By Lemma \ref{lemma2.1}, $\mathcal{E}(\hat{b})_t$ is a symmetric $G$-martingale defined as the following: 
    \begin{small}
        \begin{align*}
            \mathcal{E}(\hat{b})_t:=\exp\left(\int_{0}^{t}\hat{b}_sd B_s-
            \frac{1}{2}\int_{0}^{t}|\hat{b}_s|^2d\langle B\rangle_s\right),\ t\ge0,
        \end{align*}
    \end{small}From \cite{HLS18}, we can define a new $G$-expectation with $\mathcal{E}(\hat{b})$ 
    satisfying 
    \begin{small}
        \begin{equation*}
            \tilde{\mathbb{E}}[X]=\widehat{\mathbb{E}}[\mathcal{E}(\hat{b})_TX],\quad \forall X\in L^p_G(\Omega_T),
        \end{equation*}
    \end{small}where $\small p>\frac{q}{q-1}$ and $q$ is the order in the reverse H\"{o}lder inequality
    for $\mathcal{E}(\hat{b})$. Then, the conditional expectation $\tilde{\mathbb{E}}_t[\cdot]$ 
    is
    \begin{small}
        \begin{equation*}
            \tilde{\mathbb{E}}_t[X]=\widehat{\mathbb{E}}_t\Big[\frac{\mathcal{E}(\hat{b})_T}{\mathcal{E}(\hat{b})_t}X\Big],~~\text{q.s.},~~\forall X\in L^p_G(\Omega_T).
        \end{equation*}
    \end{small}By Lemma \ref{lemma-Gisr-1}, we know that $\tilde{B}:=B-\int \hat{b}d\langle B\rangle$
    is a $G$-Brownian motion under $\tilde{\mathbb{E}}$.
    Thus, \eqref{hat-y-1} rewrites as:
    \begin{small}
        \begin{align}\label{hat-y-2}
            \hat{y}_t=  \int_t^T\left(g\left(s,U_s^1, z_s^2\right)
            -g\left(s,U_s^2, z_s^2\right)\right) d\langle B\rangle_s 
             -\int_t^T \hat{z}_s d \tilde{B}_s-\int_t^Tdk_s^1+\int_t^Tdk_s^2 .
        \end{align}
    \end{small}Moreover, by Lemma \ref{lemma-Gisr-2}, $k^1$ and $k^2$ are two non-increasing $G$-martingales
    under $\tilde{\mathbb{E}}$. 
    And by $y_t\in S_G^\infty$, we have $y_t\in L^p_G(\Omega_T)$ for each $p$.
    Taking conditional expectation $\tilde{\mathbb{E}}_t$ on both sides of \eqref{hat-y-2}, we get 
    \begin{small}
        \begin{align}\label{estimate-hat-y}
            |\hat{y}_t|&\leq \tilde{\mathbb{E}}_t\left[
                \int_{t}^{T}|g\left(s,U_s^1, z_s^2\right)
                -g\left(s,U_s^2, z_s^2\right)| d\langle B\rangle_s\right]\notag\\
                &\leq \tilde{\mathbb{E}}_t\left[
                    \int_{t}^{T}L_1|\hat{U}| d\langle B\rangle_s
                \right]\\
                &\leq \widehat{\mathbb{E}}_t\left[
                    \frac{\mathcal{E}(\hat{b})_T}{\mathcal{E}(\hat{b})_t}\int_{t}^{T}L_1|\hat{U}| d\langle B\rangle_s
                \right]\notag\\
                &\leq L_1T\bar{\sigma}\|\hat{U}\|_{S_G^\infty}\notag.
        \end{align}
    \end{small}For simplicity, let 
    $s^i=s^{U^i}$, $l^i=l^{U^i}$, $r^i=r^{U^i}$ 
    and define $\bar{s}^i$, $\bar{l}^i$, $\bar{r}^i$ ,$i=1,2$ as in \eqref{bar-slr}. 
    Let $\bar{A}^i$ be the solution to the Skorokhod 
    problem $\mathbb{SP}_{\bar{l}^i}^{\bar{r}^i}(\bar{s}^i)$, $i=1,2$. 
    By Theorem \ref{thm-SP-K1-K2}, we obtain 
    \begin{small}
        \begin{align*}
            \sup_{t \in[0, T]}\left|\bar{A}^1_{t}-\bar{A}^2_{t}\right|\leq 
            \frac{C}{c}\sup_{t\in[0,T]}\left|\bar{s}^1_t-\bar{s}^2_t\right|+\frac{1}{c}(\bar{L}_T \vee \bar{R}_T),	
        \end{align*}
    \end{small}where
    $\bar{L}_T=\sup_{(t,x)\in[0,T]\times \mathbb{R}}|\bar{l}^1(t,x)-\bar{l}^2(t,x)|$, $
    \bar{R}_T=\sup_{(t,x)\in[0,T]\times \mathbb{R}}|\bar{r}^1(t,x)-\bar{r}^2(t,x)|$.
    Note that $\bar{s}^i_{t}=\widehat{\mathbb{E}}[y^i_{T-t}],$
    we get 
    $\sup\nolimits_{t\in[0,T]}\left|\bar{s}^1_t-\bar{s}^2_t\right|\leq \sup\nolimits_{t\in[0,T]} \widehat{\mathbb{E}}\left[|\hat{y}_t|\right].$
    And by Assumption \ref{assumption-L-R}, we have $\bar{L}_T\leq 2C\sup\nolimits_{t \in[0, T]}\widehat{\mathbb{E}}[|\hat{y}_t|]$ 
    and $\bar{R}_T\leq 2C\sup\nolimits_{t \in[0, T]}\widehat{\mathbb{E}}[|\hat{y}_t|]$.
    \noindent Hence, we obtain that
    \begin{small}
        \begin{equation}\label{hat-bar-A}
            \sup_{t \in[0, T]}\left|\bar{A}^1_{t}-\bar{A}^2_{t}\right|
            \leq \frac{3C}{c}\sup_{t \in[0, T]}\widehat{\mathbb{E}}\left[|\hat{y}_t|\right].
        \end{equation}
    \end{small}Then, Combining \eqref{Y-Z-K-A-U}, \eqref{estimate-hat-y} and \eqref{hat-bar-A}, 
    we can derive that
    \begin{small}
        \begin{align*}
            \left\|\Gamma\left(U^1\right)-\Gamma\left(U^2\right)\right\|_{S_G^{\infty}}
            &=\left\| \hat{y}+\bar{A}^1-\bar{A}^2\right\|_{S_G^{\infty}}
            \leq \left\| \hat{y}\right\|_{S_G^{\infty}}+\sup_{t\in[0,T]}
            \left|\bar{A}^1_t-\bar{A}^2_t\right| \\
            &\leq M(c,C,L_1,\bar{\sigma})T\|\hat{U}\|_{S_G^{\infty}},
        \end{align*}
    \end{small}where $M(c,C,L_1,\bar{\sigma})=\left(1+\frac{3C}{c}\right)L_1\bar{\sigma}$.

    Therefore, if $\delta$ is sufficiently small such that 
    $M(c,C,L_1,\bar{\sigma})\delta<1$, $\Gamma$ defines a strict contraction map on $[0,T]$,
    which implies the existence and the uniqueness of the 
    solution on sufficiently small time interval $[0,T]$.
     
    {\bf Step 2: Global Solution via Backward Induction.} For the general $T$, we choose $n\geq 1$ such 
    that $n\delta \geq T$. Set $T_k:=\frac{k T}{n}$, for $k=0,1, \cdots, n$. 
    By backward induction, for $k=n, n-1, \cdots, 1$, 
    there exists a unique solution $\left(Y^k, Z^k, K^k,A^k\right)$ 
    to the following $G$-BSDE with double mean reflections on 
    the interval $\left[T_{k-1}, T_k\right]$
    \begin{small}
        \begin{equation*}
            \begin{cases}
                Y_t^k=Y_{T_k}^{k+1}+\int_t^{T^k} g(s,Y_s^k,Z_s^k)d
                \langle B\rangle_s-\int_t^{T^k} Z_s^k dB_s-(K_{T_k}^k-K_t^k)+(A_{T_k}^k-A_t^k), \\
                \widehat{\mathbb{E}}[L(t,Y_t^k)]\leq 0\leq \widehat{\mathbb{E}}[R(t,Y_t^k)],\ t\in \left[T_{k-1}, T_k\right], \\
                A_{T_{k-1}}^k=0, A_t^k=A^{k,R}_t-A^{k,L}_t \textrm{ and } 
                \int_{T_{k-1}}^{T_k} \widehat{\mathbb{E}}[R(t,Y_t^k)]dA_t^{k,R}
                =\int_{T_{k-1}}^{T_k} \widehat{\mathbb{E}}[R(t,Y_t^k)]dA_t^{k,L}=0,
            \end{cases}
        \end{equation*}
    \end{small}where $Y_T^{n+1}=Y_T^{n}=\xi$. We denote
    \begin{small}
        \begin{align*}
            &Y_t=\sum_{k=1}^n Y_t^k I_{[T_{k-1}, T_k)}(t)+Y_T^{n} I_{\{T\}}(t), 
            \  Z_t=\sum_{k=1}^n Z_t^k I_{[T_{k-1}, T_k)}(t)+Z_T^{n} I_{\{T\}}(t),\\
            &K_t=K_t^k+\sum_{j=1}^{k-1} K_{T_j}^j, 
            \ A_t=A_t^k+\sum_{j=1}^{k-1} A_{T_j}^j, 
            \ t\in\left[T_{k-1}, T_k\right], \ k=1,\cdots, n,
        \end{align*}
    \end{small}with the notation $\sum_{j=1}^{0} K_{T_j}^j=\sum_{j=1}^{0} A_{T_j}^j=0$ 
    ($A^R, A^L$ are defined similarly). 
    It is easy to check that 
    $(Y,Z,K,A)\in S_G^{\infty}\times\text{BMO}_G\times \cap_{\alpha\ge2}S_G^\alpha\times \mathcal{A}_D$ 
    is a solution to quadratic $G$-BSDE \eqref{DMFGBSDE} with double mean reflections. 
    Uniqueness on the whole interval is a direct consequence of the uniqueness on each
    small interval.

    The proof is complete.
\end{proof}

\subsection{Unbounded terminal condition}
In this subsection, we aim to investigate the existence and uniqueness of solution to 
the doubly mean reflected G-BSDE \eqref{DMFGBSDE} with quadratic generators 
and unbounded terminal condition. 
Without loss of generality, we simplify \eqref{DMFGBSDE} as follows:
\begin{small}
    \begin{equation}\label{DMFGBSDE-simplify}
        \begin{cases}
            Y_t=\xi+\int_t^T g(s,Y_s,Z_s)d\langle B\rangle_s-\int_t^T Z_s dB_s-(K_T-K_t)+(A_T-A_t), \\
            \widehat{\mathbb{E}}[L(t,Y_t)]\leq 0\leq \widehat{\mathbb{E}}[R(t,Y_t)], \\
            A_t=A^R_t-A^L_t \textrm{ and } \int_0^T \widehat{\mathbb{E}}[R(t,Y_t)]dA_t^R=\int_0^T \widehat{\mathbb{E}}[L(t,Y_t)]dA^L_t=0.
        \end{cases}
    \end{equation}
\end{small}

For this, we make the following assumptions.
\begin{assumption}\label{assumption-UBT}
    Assume that the following hold:
    \begin{enumerate}
        \item[(i)] The terminal condition $\xi\in L_G^1(\Omega)$ with $\widehat{\mathbb{E}}[L(T,\xi)]\leq 0\leq \widehat{\mathbb{E}}[R(T,\xi)]$;
        \item[(ii)] There are two positive constants $L_1$ and $L_2$ such that for each $(t,\omega)\in [0,T]\times\Omega,$
        \begin{small}
            \begin{align*}
                |g(t,\omega,y,z)-g(t,\omega,y^\prime,z^\prime)|
                \leq L_1|y-y^\prime|+L_2(1+|z|+|z^\prime|)|z-z^\prime|;
            \end{align*}
        \end{small}
        \item[(iii)] There exists a non-negative stochastic process $\alpha_t\in M_G^1(0,T)$ such that for any 
        $(t,\omega,y,z)\in[0,T]\times\Omega\times\mathbb{R}\times\mathbb{R},$
        \begin{small}
            \begin{align*}
                |g(t,\omega,y,z)|\leq \alpha_t+\frac{L_2}{2}+L_1|y|+\frac{3L_2}{2}|z|^2;
            \end{align*}
        \end{small}
        \item[(iv)] The generator $g$ is uniformly continuous in $(t,\omega)$, i.e. there is a non-decreasing continuous function $w: [0, +\infty)\to [0, +\infty)$ such that $w(0)=0$ and
        \begin{small}
            \begin{align*}
                \sup_{y,z\in \mathbb{R}}|g(t,\omega,y,z)-g(t^\prime,\omega^\prime,y,z)|\leq w(|t-t^\prime|+\|\omega-\omega^\prime\|_\infty);
            \end{align*}
        \end{small}
        \item [(v)] For each $(t,\omega,y)\in [0,T]\times\Omega\times\mathbb{R}$, $g(t,\omega,y,\cdot)$
        is either convex or concave;
        \item [(vi)] Both the terminal value $\xi$ and $\small \int_{0}^{T}\alpha_tdt$ have exponential 
        moments of arbitrary order, i.e.
        \begin{small}
            \begin{align*}
                \widehat{\mathbb{E}}\left[\exp\left\{p|\xi|+p\int_{0}^{T}\alpha_tdt\right\}\right]<\infty,\ \text{for any } p\geq 1.
            \end{align*} 
        \end{small}
    \end{enumerate}
\end{assumption}

With the above assumptions, we give the main result of this subsection.
\begin{theorem}\label{thm-UBT}
    Assume that Assumption \ref{assumption-L-R} and \ref{assumption-UBT} hold.
    Then, the quadratic $G$-BSDE \eqref{DMFGBSDE-simplify} with double mean reflections
    admits a unique deterministic flat solution 
    $(Y,Z,K,A)\in \mathcal{E}_G(\mathbb{R})\times \mathcal{H}_G(\mathbb{R})\times \mathcal{L}_G(\mathbb{R})\times \mathcal{A}_D$. 
\end{theorem}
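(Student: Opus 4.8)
The plan is to prove Theorem~\ref{thm-UBT} by the now-standard two-phase strategy of the $\theta$-method: first establish uniqueness and existence on a sufficiently small time interval $[T-\delta,T]$, then stitch local solutions together by backward induction to obtain the global solution. The whole argument rests on linearizing the quadratic dependence in $z$ through the $G$-BMO/Girsanov machinery of Lemmas~\ref{lemma-Gisr-1}--\ref{lemma-Gisr-2}, while controlling the reflection term $A$ via the backward Skorokhod problem (Theorems~\ref{thm-BSP} and \ref{thm-BSP-K1-K2}) exactly as in the bounded case (Lemma~\ref{lemma-MFGBSDE-Y^U}), the crucial new ingredient being that unboundedness of $\xi$ forces us to replace $S_G^\infty$-estimates by exponential-moment estimates and to exploit the convexity/concavity in Assumption~\ref{assumption-UBT}(v).

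\textbf{Local uniqueness.} First I would take two solutions $(Y^1,Z^1,K^1,A^1)$ and $(Y^2,Z^2,K^2,A^2)$ and, using the $\theta$-method, form the scaled difference $Y^\theta:=\frac{1}{1-\theta}(Y^1-\theta Y^2)$ for $\theta\in(0,1)$, and similarly for $Z,K,A$. Convexity (respectively concavity) of $g(t,\omega,y,\cdot)$ gives the one-sided inequality $g(s,Y^1_s,Z^1_s)-\theta g(s,Y^2_s,Z^2_s)\le (1-\theta)g\bigl(s,Y^\theta_s,Z^\theta_s\bigr)+\text{(lower order)}$, which lets me bound $Y^\theta$ from above by the solution of a genuinely quadratic $G$-BSDE with generator $g$ and a controllable driver. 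Applying the a priori exponential estimate (the referenced Lemma~\ref{lemma-priori-estimate}) together with the $G$-Doob inequality in the exponential form of Remark~\ref{Doob-exp}, I would obtain $\widehat{\mathbb{E}}[\exp\{p\sup_t|Y^\theta_t|\}]$-bounds that are uniform in $\theta$; the reflection contribution is handled by the Skorokhod continuity estimate of Theorem~\ref{thm-BSP-K1-K2}, which converts the gap between the two boundary data $(l^i,r^i)$ into $\sup_t\widehat{\mathbb{E}}[|\hat Y_t|]$-terms as in \eqref{hat-bar-A}. Letting $\theta\to 1$ on a small enough interval makes these scaled differences collapse, forcing $Y^1=Y^2$, whence $Z^1=Z^2$, $K^1=K^2$, and finally $A^1=A^2$ by the Skorokhod-condition argument used in Lemma~\ref{lemma-MFGBSDE-Y^U}.

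\textbf{Local existence.} Next I would construct the solution by truncation: define $g_n(t,\omega,y,z):=g\bigl(t,\omega,y,(z\wedge n)\vee(-n)\bigr)$ (or a smooth analogue), so that each $g_n$ is Lipschitz in $z$ and the corresponding doubly-reflected $G$-BSDE admits a solution $(Y^n,Z^n,K^n,A^n)$ via the representation of Lemma~\ref{lemma-MFGBSDE-Y^U}, with $A^n$ produced by the backward Skorokhod problem $\mathbb{BSP}_{l^n}^{r^n}(s^n,a)$. Using Assumption~\ref{assumption-UBT}(iii),(vi) and Lemma~\ref{lemma-priori-estimate} I would derive uniform exponential-moment bounds $\sup_n\widehat{\mathbb{E}}[\exp\{p\sup_t|Y^n_t|\}]<\infty$ and BMO-bounds on $Z^n$; then the $\theta$-method applied to the pair $(Y^n,Y^m)$ shows the sequence is Cauchy in an exponential norm on a sufficiently short interval, giving a limit $(Y,Z)$ with $Z\in\mathcal{H}_G$. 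The convergence of $K^n$ and $A^n$ would follow from the $G$-It\^o formula and the uniform estimates (using Lemma~\ref{lemma-HTW22} to upgrade $L^1$-convergence to $L^p$), and the Skorokhod continuity of Theorem~\ref{thm-BSP-K1-K2} to pass the reflection constraints and the complementarity conditions to the limit. Finally I would verify that the limit quadruple solves \eqref{DMFGBSDE-simplify}, that $A$ is deterministic and flat, and that $Y\in\mathcal{E}_G(\mathbb{R})$, $K\in\mathcal{L}_G(\mathbb{R})$.

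\textbf{Globalization and main obstacle.} With local well-posedness on intervals of length $\delta$ in hand, I would partition $[0,T]$ by $T_k=kT/n$ with $n\delta\ge T$ and solve backwards, pasting the pieces as in Step~2 of Theorem~\ref{thm-BT}; the exponential-moment bounds propagate across the finitely many subintervals, so the glued processes lie in $\mathcal{E}_G(\mathbb{R})\times\mathcal{H}_G(\mathbb{R})\times\mathcal{L}_G(\mathbb{R})\times\mathcal{A}_D$ and uniqueness is inherited piecewise. I expect the main difficulty to be the local Cauchy/convergence step: unlike the bounded case, the contraction constant now depends on exponential moments of $Z^n$ that are only controlled through the $G$-BMO norm and the reverse H\"older inequality (Lemma~\ref{reverse holder}), so one must simultaneously keep the interval length $\delta$ small enough for contraction \emph{and} ensure the BMO norms stay below the threshold $\phi(q)$ that makes the Girsanov change of measure valid. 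Reconciling these two smallness requirements uniformly in $n$ and in $\theta$, while the nondeterministic reflection feedback couples $A^n$ back into the $Y$-estimates, is where the argument is most delicate.
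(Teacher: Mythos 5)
Your overall architecture (local uniqueness by the $\theta$-method, local existence by truncation plus a $\theta$-method Cauchy argument, then pasting local solutions backwards in time) is the same as the paper's, and your uniqueness sketch essentially reproduces Theorem \ref{uniqueness-MRGBSDE}: scaled differences, convexity to get a one-sided linearization, Lemma \ref{lemma-priori-estimate} and the exponential Doob inequality, and the Skorokhod continuity estimate to absorb the reflector. That part is fine. (One minor miscalibration: in the unbounded regime the paper never invokes the Girsanov change of measure or the reverse H\"older threshold $\phi(q)$ — those belong to the bounded case — so the "main obstacle" you identify is not where the actual difficulty sits; the delicate point is rather the smallness condition on $h$ needed to close the self-referential exponential estimate on $\sup_t \delta_\theta \bar Y_t$.)

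The genuine gap is in your choice of truncation for the existence step. You truncate the $z$-argument, $g_n(t,\omega,y,z)=g(t,\omega,y,(z\wedge n)\vee(-n))$, to make the generator Lipschitz. This fails on two counts. First, composing $g(t,\omega,y,\cdot)$ with the clamp $z\mapsto (z\wedge n)\vee(-n)$ destroys convexity (and concavity) in $z$, and convexity is exactly what your own Cauchy argument for the pair $(Y^n,Y^m)$ needs: without it the one-sided bound on $\delta_\theta g$ collapses and the $\theta$-method does not apply to the approximating sequence. Second, your approximating equations have a Lipschitz generator but still an \emph{unbounded} terminal value, so they are not covered by Lemma \ref{lemma-MFGBSDE-Y^U} or Theorem \ref{thm-BT}, both of which require Assumption \ref{assumption-BT}(i)--(ii) (bounded $\xi$); you would have to import a separate Lipschitz-with-unbounded-data well-posedness result. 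The paper avoids both problems by truncating the \emph{terminal value} and the drift at zero instead: $\xi^{(m)}=(\xi\wedge m)\vee(-m)$ and $g^{(m)}(t,y,z)=g(t,y,z)-g_0(t)+g_0^{(m)}(t)$ with $g_0(t)=g(t,0,0)$. This keeps the generator quadratic and convex in $z$ (it is shifted only by a $z$-independent bounded term), makes each approximating equation solvable by Theorem \ref{thm-BT}, and — crucially — makes the discrepancy between consecutive approximations controlled by $(|\xi|-m)^{+}$ and $(|g_0(t)|-m)^{+}$, which vanish monotonically and feed cleanly into the $\theta$-method estimate via the term $\rho(\theta,m)$. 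To repair your proof you should adopt this truncation (or otherwise prove that your $z$-truncated approximations remain convex and are Cauchy by some non-$\theta$-method argument, which is not available here).
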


The following result is important for our subsequent proofs, 
which can be found in \cite{GuLinXu23}.
 \begin{lemma}[\cite{GuLinXu23}]\label{lemma-priori-estimate}
    Assume that $(Y,Z,K)\in \mathcal{E}_G^2(\mathbb{R})\times \mathcal{H}_G^2(\mathbb{R})\times \mathcal{L}_G^2(\mathbb{R})$
    is a solution to the following $G$-BSDE:
    \begin{small}
        \begin{align*}
            Y_t=\xi+(\bar{K}_T-\bar{K}_t)+\int_{t}^{T}g(s,Z_s)d\langle B\rangle_s
            -\int_{t}^{T}Z_sdB_s-(K_T-K_t),
        \end{align*}
    \end{small}where $\bar{K}\in S_G^p(0,T)$ is a non-increasing $G$-martingale and 
    $\alpha_t\in M_G^1(0,T)$ is a non negative stochastic process.
    Define $\tilde{\sigma}^2=(\bar{\sigma}/ \underline{\sigma})^2$ 
    and $\hat{\sigma}^2=\tilde{\sigma}^2(\bar{\sigma}^2\vee 1)$. Note that
    $\underline{\sigma}^2dt\leq d\langle B\rangle_t<\bar{\sigma}^2$.
    Suppose that there are two constants $p\geq 1$ and $\varepsilon>0$ such that
    \begin{small}
        \begin{align*}
            \widehat{\mathbb{E}}\left[\exp \left\{(2 p+\varepsilon) \gamma 
            \hat{\sigma}^2 \sup_{t \in[0, T]}\left|Y_t\right|+(2 p+\varepsilon) 
            \gamma \hat{\sigma}^2 \int_0^T \beta_t d t\right\}\right]<\infty .
        \end{align*}
    \end{small}
    
    Then, we have 
    \begin{enumerate}
        \item [(i)] If $|g(t,z)|\leq \alpha_t+\frac{\gamma}{2}|z|^2$, then, for each $t\in[0,T]$,
        \begin{small}
            \begin{align*}
                \exp \left\{p \gamma \tilde{\sigma}^2\left|Y_t\right|\right\} 
                \leq \widehat{\mathbb{E}}_t\left[\exp \left\{p \gamma 
                \hat{\sigma}^2|\xi|+p \gamma \hat{\sigma}^2 \int_t^T 
                \beta_s d s\right\}\right].
            \end{align*}
        \end{small}
        \item [(ii)] If $g(t,z)\leq \alpha_t+\frac{\gamma}{2}|z|^2$, then, for each $t\in[0,T]$,
        \begin{small}
            \begin{align*}
                \exp \left\{p \gamma \tilde{\sigma}^2Y_t^+\right\} 
                \leq \widehat{\mathbb{E}}_t\left[\exp \left\{p \gamma 
                \hat{\sigma}^2\xi^++p \gamma \hat{\sigma}^2 \int_t^T 
                \beta_s d s\right\}\right].
            \end{align*}
        \end{small}
    \end{enumerate}
\end{lemma}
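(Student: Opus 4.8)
The plan is to prove (ii) first and then deduce (i) by applying (ii) to both $Y$ and $-Y$. For (ii) I would use the exponential change of variables adapted to the $G$-framework. Fix the constant $c:=p\gamma\tilde\sigma^2$ and set
\[
W_t:=\exp\Big\{cY_t+c\bar\sigma^2\int_0^t\beta_s\,ds\Big\},\qquad t\in[0,T].
\]
Since $\tilde\sigma^2>1$ and $p\ge1$ we have $c\ge\gamma$, which is exactly what is needed to absorb the quadratic term. The goal is to show $W$ is a $G$-submartingale, so that $W_t\le\widehat{\mathbb E}_t[W_T]$, and then to pass from the coefficient $\tilde\sigma^2$ (which this argument naturally produces) to the $\hat\sigma^2$ on the right-hand side of the statement by elementary monotonicity.

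First I would apply the $G$-It\^{o} formula to $W$. Writing $dY_t=Z_t\,dB_t-g(t,Z_t)\,d\langle B\rangle_t+dK_t-d\bar K_t$ and using $d\langle Y\rangle_t=|Z_t|^2\,d\langle B\rangle_t$, the $d\langle B\rangle_t$ part of $dW_t$ is $cW_t\big[-g(t,Z_t)+\tfrac{c}{2}|Z_t|^2\big]\,d\langle B\rangle_t$. Invoking $g(t,z)\le\beta_t+\tfrac{\gamma}{2}|z|^2$ together with $c\ge\gamma$, this is bounded below by $cW_t\big[-\beta_t+\tfrac{c-\gamma}{2}|Z_t|^2\big]\,d\langle B\rangle_t\ge-cW_t\beta_t\,d\langle B\rangle_t\ge-c\bar\sigma^2W_t\beta_t\,dt$, the last two steps using $\tfrac{c-\gamma}{2}|Z_t|^2\ge0$, $\beta_t\ge0$ and $d\langle B\rangle_t\le\bar\sigma^2\,dt$. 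This lower bound is precisely compensated by the $+c\bar\sigma^2W_t\beta_t\,dt$ produced by the $\int_0^t\beta$ term inside $W$. Consequently $W$ decomposes as $W_t=W_0+N_t+I_t$, where $N_t:=\int_0^t cW_sZ_s\,dB_s+\int_0^t cW_s\,dK_s$ and $I_t:=-\int_0^t cW_s\,d\bar K_s+(\text{nonnegative }d\langle B\rangle\text{-drift})$. Here $\int cW\,dK$ is a non-increasing $G$-martingale (the cone of non-increasing $G$-martingales being stable under integration against the nonnegative integrand $cW$), so $N$ is a $G$-martingale, while $I$ is nondecreasing because $\bar K$ is non-increasing. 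Hence $\widehat{\mathbb E}_t[W_T]=\widehat{\mathbb E}_t[W_t+(N_T-N_t)+(I_T-I_t)]\ge W_t+\widehat{\mathbb E}_t[N_T-N_t]=W_t$, so $W$ is a $G$-submartingale.

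This yields $\exp\{cY_t\}\le\widehat{\mathbb E}_t[\exp\{c\xi+c\bar\sigma^2\int_t^T\beta_s\,ds\}]$. Since $\xi\le\xi^+$, $\tilde\sigma^2\le\hat\sigma^2$ and $\tilde\sigma^2\bar\sigma^2\le\tilde\sigma^2(\bar\sigma^2\vee1)=\hat\sigma^2$, while $\xi^+,\beta\ge0$, monotonicity upgrades the right-hand exponent to $p\gamma\hat\sigma^2\xi^++p\gamma\hat\sigma^2\int_t^T\beta_s\,ds$. The upgraded right-hand side is $\ge\widehat{\mathbb E}_t[1]=1$ because its exponent is nonnegative, so $\exp\{cY_t^+\}=\max(\exp\{cY_t\},1)$ is also dominated by it; this is (ii). For (i), I would note that $-Y$ solves an equation of the same form with data $(-\xi,-g,K,\bar K,-Z)$ and that $-g\le|g|\le\beta_t+\tfrac{\gamma}{2}|z|^2$, so (ii) applies to $-Y$ and bounds $Y_t^-=(-Y_t)^+$ with $\xi^-$ replacing $\xi^+$. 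Since $|Y_t|=\max(Y_t^+,Y_t^-)$ and $\xi^+,\xi^-\le|\xi|$, combining the two one-sided bounds gives (i).

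The step I expect to be the main obstacle is the integrability bookkeeping that makes the above rigorous: verifying that $N$ is a genuine $G$-martingale (not merely local), so that $\widehat{\mathbb E}_t[N_T-N_t]=0$, and that $W_T$ and $\sup_t W_t$ are integrable enough to take conditional $G$-expectations and to run a localization argument. This is exactly where the hypothesis $\widehat{\mathbb E}[\exp\{(2p+\varepsilon)\gamma\hat\sigma^2\sup_{t}|Y_t|+(2p+\varepsilon)\gamma\hat\sigma^2\int_0^T\beta_s\,ds\}]<\infty$ is used: the surplus $\varepsilon$ supplies an $L^{1+}$ margin which, together with $Z\in\text{BMO}_G$, the reverse H\"older inequality (Lemma \ref{reverse holder}) and the exponential $G$-Doob inequality (Remark \ref{Doob-exp}), controls $\sup_t W_t$ and $\int_0^T|cW_sZ_s|^2\,d\langle B\rangle_s$ in $L_G$, upgrading the stochastic integrals to true symmetric $G$-martingales and justifying the Lemma \ref{lemma-Gisr-2}-type treatment of the $K$-term. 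A standard localization-plus-uniform-integrability argument then removes the auxiliary stopping times and delivers the stated inequalities for every $t\in[0,T]$.
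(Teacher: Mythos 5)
Two preliminary remarks. First, the paper does not prove this lemma at all: it is imported verbatim from \cite{GuLinXu23} (whose proof in turn follows the a priori estimates of \cite{HTW22}), so the comparison is with that source. Second, your core argument coincides with that proof in all essentials: the exponential transform $W_t=\exp\{cY_t+c\bar{\sigma}^2\int_0^t\beta_s\,ds\}$ with $c=p\gamma\tilde{\sigma}^2\geq\gamma$, the $G$-It\^{o} computation showing that the $d\langle B\rangle$-drift of $W$ is bounded below by $-c\bar{\sigma}^2W_t\beta_t\,dt$ and is compensated by the time-integral inside the exponential, the decomposition of $W$ into a symmetric stochastic integral plus $\int cW\,dK$ (a non-increasing $G$-martingale, by the stability of that cone under nonnegative integrands, which is the HJPS-type lemma you implicitly invoke) plus a nondecreasing remainder from $-\bar{K}$, the resulting $G$-submartingale inequality $W_t\leq\widehat{\mathbb{E}}_t[W_T]$ (valid because $\widehat{\mathbb{E}}_t$ is additive against increments of symmetric $G$-martingales), and the final monotone upgrade from $\tilde{\sigma}^2\bar{\sigma}^2$ to $\hat{\sigma}^2=\tilde{\sigma}^2(\bar{\sigma}^2\vee 1)$. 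Your reduction of (i) to (ii) is also sound, because negating the equation swaps the roles of $K$ and $\bar{K}$ — both non-increasing $G$-martingales — and the generator $\tilde{g}(t,z):=-g(t,-z)$ inherits the one-sided bound from $|g|\leq\alpha_t+\frac{\gamma}{2}|z|^2$. (You also silently identify the $\alpha_t$ of the hypotheses with the $\beta_t$ of the conclusions; that is indeed the intended reading of the statement.)

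The one genuine gap sits exactly in the step you flag as the main obstacle, and two of the tools you propose there would fail. First, $Z\in\text{BMO}_G$ is not a hypothesis of the lemma and is generally false in this setting: BMO bounds on $Z$ are tied to \emph{bounded} $Y$, whereas here $Y$ is only in $\mathcal{E}_G^2(\mathbb{R})$ — indeed the whole point of the lemma is the unbounded case, so the reverse H\"{o}lder inequality (Lemma \ref{reverse holder}) is not at your disposal. Second, ``localization plus removal of auxiliary stopping times'' does not transfer to the $G$-framework as stated: conditional $G$-expectations $\widehat{\mathbb{E}}_\tau$ at stopping times are not defined, and the spaces $L_G^p$, $H_G^p$ are not stable under stopping, so one cannot run the classical optional-stopping argument under $\widehat{\mathbb{E}}$. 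The standard repairs — and effectively what the cited proof does — are: (a) truncation, i.e., establish the submartingale inequality with bounded approximations of the integrand (using that $\int\zeta\,dB+\int\eta\,dK$ is a $G$-martingale for \emph{bounded} nonnegative $\eta$) and pass to the limit using the $(2p+\varepsilon)$ exponential moment assumption together with Remark \ref{Doob-exp} and Lemma \ref{lemma-HTW22} to get uniform integrability; or (b) fix $P\in\mathcal{P}$, perform the classical localization under $P$ (where stopping is legitimate), obtain the inequality for $E^{P'}_t$ with $P'\in\mathcal{P}(t,P)$, and aggregate into the conditional $G$-expectation via the representation of Theorem \ref{thm2.2}. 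With either repair your argument is complete; as written, the claim that $N$ is a true (rather than local) $G$-martingale is not justified.
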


For convenience, 
we use $C_i,i=1,2,...,12,$ to represent bounded constants 
that depend only on certain parameters in the subsequent proof.

\begin{theorem}\label{uniqueness-MRGBSDE}
    Assume that Assumption \ref{assumption-L-R} and \ref{assumption-UBT} hold.
    Then, the quadratic $G$-BSDE \eqref{DMFGBSDE-simplify} with double mean reflections
    admits most one deterministic flat solution 
    $(Y,Z,K,A)\in \mathcal{E}_G(\mathbb{R})\times \mathcal{H}_G(\mathbb{R})\times \mathcal{L}_G(\mathbb{R})\times \mathcal{A}_D$. 
\end{theorem}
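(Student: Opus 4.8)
The plan is to establish uniqueness on a small time interval first and then patch together via backward induction, exactly mirroring the two-step structure used in the bounded-terminal case (Theorem \ref{thm-BT}). Suppose $(Y^1,Z^1,K^1,A^1)$ and $(Y^2,Z^2,K^2,A^2)$ are two deterministic flat solutions of \eqref{DMFGBSDE-simplify}. The central device is the $\theta$-method: for $\theta\in(0,1)$ I would introduce the scaled differences
\begin{small}
\begin{equation*}
\delta_\theta Y=\frac{Y^1-\theta Y^2}{1-\theta},\quad \delta_\theta Z=\frac{Z^1-\theta Z^2}{1-\theta},\quad \delta_\theta K=\frac{K^1-\theta K^2}{1-\theta},\quad \delta_\theta A=\frac{A^1-\theta A^2}{1-\theta},
\end{equation*}
\end{small}
so that $\delta_\theta Y$ solves a $G$-BSDE whose generator, using the convexity (or concavity) of $g$ in $z$ from Assumption \ref{assumption-UBT}(v), can be bounded above by a term of the form $\alpha_s+L_1|\delta_\theta Y_s|+\tfrac{\gamma}{2}|\delta_\theta Z_s|^2$. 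The point of the convexity is precisely that $g(s,Z^1)-\theta g(s,Z^2)\le (1-\theta)g\big(s,\delta_\theta Z\big)$ up to a controllable remainder, so the scaled difference inherits a quadratic-growth structure to which the a priori exponential estimate of Lemma \ref{lemma-priori-estimate} applies.

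Next I would apply Lemma \ref{lemma-priori-estimate}(ii) to $\delta_\theta Y$ (the one-sided $Y^+$ bound) together with the symmetric argument for the concave case, obtaining an exponential moment bound of the form
\begin{small}
\begin{equation*}
\exp\big\{p\gamma\tilde{\sigma}^2(\delta_\theta Y_t)^+\big\}\le \widehat{\mathbb{E}}_t\Big[\exp\big\{p\gamma\hat{\sigma}^2|\delta_\theta\xi|+p\gamma\hat{\sigma}^2\textstyle\int_t^T\beta_s\,ds\big\}\Big].
\end{equation*}
\end{small}
Since both solutions share the same terminal value $\xi$, the scaled terminal difference $\delta_\theta\xi=\xi$, and the reflection term $\delta_\theta A$ is deterministic and controlled through Theorem \ref{thm-BSP-K1-K2}; the continuity estimate there bounds $\sup_t|A^1_t-A^2_t|$ by a constant times $\sup_t\widehat{\mathbb{E}}[|\delta_\theta Y_t|]$ plus a boundary-difference term that vanishes because $L,R$ are identical for both solutions. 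Combining the exponential bound with Doob's maximal inequality under the $G$-framework (Theorem \ref{Doob-inequality}, in the exponential form of Remark \ref{Doob-exp}) and with Assumption \ref{assumption-UBT}(vi) yields a uniform-in-$\theta$ estimate that, on a sufficiently small interval $[T-\delta,T]$, forces $\|\delta_\theta Y\|$ to stay bounded as $\theta\to 1$; a symmetric computation controls $\delta_\theta(-Y)$, and letting $\theta\to 1$ collapses the difference, giving $Y^1=Y^2$ on $[T-\delta,T]$. The equalities $Z^1=Z^2$, $K^1=K^2$, $A^1=A^2$ then follow from the $G$-It\^o formula and the Skorokhod/backward-Skorokhod uniqueness (Theorems \ref{thm-SP} and \ref{thm-BSP}) as in Lemma \ref{lemma-MFGBSDE-Y^U}.

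Finally I would extend to the whole interval by backward induction over a partition $T_k=kT/n$ with $n\delta\ge T$, using the local uniqueness on each $[T_{k-1},T_k]$ and matching terminal data at each $T_k$, exactly as in Step 2 of Theorem \ref{thm-BT}. The main obstacle I anticipate is the interplay between the quadratic growth and the reflection process in the $\theta$-scaling: one must verify that the convexity inequality for $g$ survives the subtraction of the two reflection terms $\delta_\theta A$ and $\delta_\theta K$ without destroying the quadratic-in-$z$ bound needed by Lemma \ref{lemma-priori-estimate}, and that the deterministic nature of $A$ together with Theorem \ref{thm-BSP-K1-K2} is enough to absorb the $\delta_\theta A$ contribution into a term that is genuinely linear in $\sup_t\widehat{\mathbb{E}}[|\delta_\theta Y_t|]$. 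Controlling this coupling uniformly in $\theta$, so that the smallness of $\delta$ overrides the constants coming from the reverse H\"older/$G$-BMO estimates, is where the real work lies.
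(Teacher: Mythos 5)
Your overall strategy (localize in time, $\theta$-method, convexity, Lemma \ref{lemma-priori-estimate}, Doob's inequality, Skorokhod continuity, let $\theta\to1$) is the same as the paper's, but there is a genuine gap in how you handle the reflection process, and it is precisely at the point you yourself flag as ``where the real work lies.'' You propose to apply the $\theta$-scaling directly to the reflected solutions, so that $\delta_\theta Y$ satisfies an equation containing the term $\delta_\theta A_T-\delta_\theta A_t$ with $\delta_\theta A=\frac{A^1-\theta A^2}{1-\theta}$. Each $A^i=A^{i,R}-A^{i,L}$ is a deterministic continuous function of bounded variation but is \emph{not} monotone, and $\delta_\theta A$ is neither a non-increasing $G$-martingale nor absorbable into the $\int_t^T\beta_s\,ds$ term of Lemma \ref{lemma-priori-estimate}. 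That lemma, as stated, only tolerates an extra additive term $\bar K_T-\bar K_t$ with $\bar K$ a non-increasing $G$-martingale (which is how the paper accommodates $\frac{\theta}{1-\theta}(k^2_T-k^2_t)$), so it simply does not apply to the equation your $\delta_\theta Y$ satisfies. The paper avoids this by first invoking the representation $Y^i_t=y^i_t+\bar A^i_{T-t}$, where $y^i$ solves the \emph{unreflected} quadratic $G$-BSDE with $Y^i$ frozen in the generator (this uses that $A^i$ is deterministic, via Lemma B.1 of \cite{HTW22}); the $\theta$-method and Lemma \ref{lemma-priori-estimate} are applied to $\delta_\theta y$, where no $A$-term appears, and the estimate is then transferred to $\delta_\theta Y$ through the Lipschitz continuity of the Skorokhod map (Theorem \ref{thm-SP-K1-K2}), which bounds $\sup_t|\bar A^1_t-\bar A^2_t|$ by $\frac{3C}{c}\sup_t\widehat{\mathbb{E}}[|y^1_t-y^2_t|]$. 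This yields the self-closing inequality \eqref{delta-theta-Y} and the contraction for small $T$.

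A second, related error: you assert that the boundary-difference term $\bar L_T\vee\bar R_T$ in Theorem \ref{thm-SP-K1-K2} ``vanishes because $L,R$ are identical for both solutions.'' It does not. The reflecting boundaries fed into the Skorokhod problem are $l^i(t,x)=\widehat{\mathbb{E}}[L(t,y^i_t-\widehat{\mathbb{E}}[y^i_t]+x)]$ (and similarly $r^i$), which depend on $y^i$ and therefore differ between the two solutions even though the loss functions $L,R$ are the same; the paper controls this difference by $2C\sup_t\widehat{\mathbb{E}}[|\hat y_t|]$ using Assumption \ref{assumption-L-R}(iii), and this contribution is essential to the constant $\frac{3C}{c}$ in \eqref{hat-bar-A} and \eqref{A1-A2}. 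Without the detour through the auxiliary unreflected solutions $y^i$ and without correctly accounting for the boundary dependence on $y^i$, the argument as proposed cannot be closed.
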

\begin{proof} 
    It suffices to prove that the quadratic $G$-BSDE 
    with double mean reflections admits 
    most one solution on a sufficiently small interval $[0,T]$.
    For the general $T$, 
    the uniqueness on the whole interval follows from  
    the uniqueness on each small time interval.
    Let $(Y^i,Z^i,K^i,A^i)\in \mathcal{E}_G(\mathbb{R})\times \mathcal{H}_G(\mathbb{R})\times \mathcal{L}_G(\mathbb{R})\times \mathcal{A}_D,i=1,2,$
    be two deterministic flat solutions to the quadratic $G$-BSDE \eqref{DMFGBSDE-simplify}.
    By Theorem 3.9 in \cite{HTW22}, the following quadratic $G$-BSDE
    \begin{small}
        \begin{align*}
            y_t^i=\xi+\int_{t}^{T}g(s,\omega_{. \wedge s},Y_s^i,z_s^i)
            d\langle B\rangle_s-\int_{t}^{T}z_s^idB_s-(k_T^i-k_t^i),\ \forall t\in[0,T]
        \end{align*}
    \end{small}admits a unique solution.

    We only consider the case when $g$ is convex in $z$. When 
    $g$ is concave in $z$, we use $\theta l^1-l^2$ and $\theta l^2-l^1$
    in the subsequent definition of $\delta_\theta l$ and $\delta_\theta \tilde{l}$, respectively.
    For each $\theta\in (0,1)$, we set
    \begin{small}
        \begin{align*}
            \delta_\theta l:=\frac{l^1-\theta l^2}{1-\theta}, 
            \ \delta_\theta\tilde{l}:=\frac{l^2-\theta l^1}{1-\theta},\ \text{and}
            \ \delta_\theta\bar{l}:=|\delta_\theta l|+|\delta_\theta\tilde{l}|,
        \end{align*}
    \end{small}for $l=y,Y,z$. Hence, the triplet $(\delta_\theta y,\delta_\theta z,\frac{1}{1-\theta}k^1)$
    solves the following $G$-BSDE on $[0,T]$:
    \begin{small}
        \begin{align*}
            \delta_\theta y_t=&\xi+\frac{\theta}{1-\theta}(k_T^2-k_t^2)+\int_{t}^{T}\delta_\theta g 
            (s,\delta_\theta Y_s,\delta_\theta z_s)d\langle B\rangle_s\\ 
            &-\int_{t}^{T}\delta_\theta z_sdB_s-\frac{1}{1-\theta}(k_T^1-k_t^1),
        \end{align*}
    \end{small}with $\small \delta_\theta g(t,\hat{y},\hat{z})=\frac{1}{1-\theta}(g(t,(1-\theta )\hat{y}+\theta Y_t^2,(1-\theta )\hat{z}+\theta z_t^2)-\theta g(t,Y_t^2,z_t^2)).$
    By $(iii)$ of Assumption \ref{assumption-UBT}, we have 
    \begin{small}
        \begin{align*}
            |g(t,\omega,y,z)|\leq \alpha_t+\frac{L_2}{2}+L_1|y|+\frac{3L_2}{2}|z|^2,
        \end{align*}
    \end{small}with convexity of $g$ in $z$, we get 
    \begin{small}
        \begin{align*}
            \delta_\theta g & \left(t, \delta_\theta Y, \hat{z}\right) \\
            & =\frac{1}{1-\theta}\left(g\left(t,(1-\theta) \delta_\theta Y+\theta Y_t^2,(1-\theta) \hat{z}+\theta z_t^2\right)-\theta g\left(t, Y_t^2, z_t^2\right)\right) \\
            & \leq L_1\left|\delta_\theta Y\right|+L_1\left|Y_t^2\right|+\frac{1}{1-\theta}\left(g\left(t, Y_t^2,(1-\theta) \hat{z}+\theta z_t^2\right)-\theta g\left(t, Y_t^2, z_t^2\right)\right) \\
            & \leq L_1\left|\delta_\theta Y\right|+L_1\left|Y_t^2\right|+g\left(t, Y_t^2, \hat{z}\right) \leq \alpha_t+\frac{L_2}{2}+2 L_1\left|Y_t^2\right|+L_1\left|\delta_\theta Y\right|+\frac{3 L_2}{2}|\hat{z}|^2
        \end{align*}
    \end{small}Using assertion $(ii)$ of Lemma \ref{lemma-priori-estimate}, taking 
    $\beta_t=\alpha_t+\frac{L_2}{2}+2 L_1\left|Y_t^2\right|+L_1\left|\delta_\theta Y\right|$ and $\gamma=3 L_2$,
    we derive that 
    \begin{small}
        \begin{align*}
            \exp  \left\{3 p L_2 \tilde{\sigma}^2\left(\delta_\theta y_t\right)^{+}\right\}
            \leq \widehat{\mathbb{E}}_t\left[\exp \left\{3 p L_2 \hat{\sigma}^2
            \left(|\xi|+\chi+L_1T\left(\sup_{t \in[0, T]}\left|
                \delta_\theta Y_t\right|\right)\right)\right\}\right] , 
    \end{align*}
    \end{small}where $\small \chi=\int_0^T \alpha_s d s+\frac{L_2 T}{2}+2 L_1 T\left(\sup_{t \in[0, T]}\left|Y_t^1\right|+\sup_{t \in[0, T]}\left|Y_t^2\right|\right)$.
    Similarly, we have
    \begin{small}
        \begin{align*}
            \exp  \left\{3 p L_2 \tilde{\sigma}^2\left(\delta_\theta \tilde{y}_t\right)^{+}\right\}
            \leq \widehat{\mathbb{E}}_t\left[\exp \left\{3 p L_2 \hat{\sigma}^2
            \left(|\xi|+\chi+L_1T\left(\sup_{t \in[0, T]}\left|
                \delta_\theta \tilde{Y}_t\right|\right)\right)\right\}\right]. 
    \end{align*}
    \end{small}Note that
    \begin{small}
        \begin{align*}
            \left(\delta_\theta y\right)^{-} \leq\left(\delta_\theta \tilde{y}\right)^{+}+2\left|y^2\right| \text { and }\left(\delta_\theta \tilde{y}\right)^{-} \leq\left(\delta_\theta y\right)^{+}+2\left|y^1\right|.
        \end{align*}
    \end{small}Therefore, we have 
    \begin{small}
        \begin{align*}
            \exp & \left\{3 p L_2 \tilde{\sigma}^2\left|\delta_\theta y_t\right|\right\} \vee \exp \left\{3 p L_2 \tilde{\sigma}^2\left|\delta_\theta \tilde{y}_t\right|\right\} \\
            & \leq \exp \left\{3 p L_2 \hat{\sigma}^2\left(\left(\delta_\theta y_t\right)^{+}+\left(\delta_\theta \tilde{y}_t\right)^{+}+2\left|y_t^1\right|+2\left|y_t^2\right|\right)\right\} \\
            & \leq \widehat{\mathbb{E}}_t\left[\exp \left\{3 p L_2 \hat{\sigma}^2\left(|\xi|+\tilde{\chi}+L_1T\left(\sup_{t \in[0, T]} \delta_\theta \bar{Y}_t\right)\right)\right\}\right]^2,
        \end{align*}
    \end{small}where $\small \tilde{\chi}=  \int_0^T \alpha_t d t+\frac{L_2 T}{2}+2 L_1 T\left(\sup_{t \in[0, T]}\left|Y_t^1\right|+\sup_{t \in[0, T]}\left|Y_t^2\right|\right)+\sup_{t \in[0, T]}\left|\bar{Y}_t^1\right|+\sup_{t \in[0, T]}\left|\bar{Y}_t^2\right|.$
    With the help of Theorem \ref{Doob-inequality}, Remark \ref{Doob-exp} and H\"{o}lder's inequality under $G$-framework, we get that for each
    $p\geq 1$ and $t\in[0,T]$,
    \begin{small}
        \begin{align}\label{delta-bar-y}
            & \widehat{\mathbb{E}}\left[\exp \left\{3 p L_2 \tilde{\sigma}^2 \sup_{t \in[0, T]} \delta_\theta \bar{y}_t\right\}\right] \notag\\
            & \quad \leq \widehat{\mathbb{E}}\left[\exp \left\{3 p L_2 \tilde{\sigma}^2 \sup_{t \in[0, T]}\left|\delta_\theta y_t\right|\right\} \exp \left\{3 p L_2 \tilde{\sigma}^2 \sup_{t \in[0, T]}\left|\delta_\theta \tilde{y}_t\right|\right\}\right]\notag\\
            & \quad \leq \hat{A}_G \widehat{\mathbb{E}}\left[\exp \left\{24 p L_2 \hat{\sigma}^2\left(|\xi|+\widetilde{\chi}+L_1T\left(\sup_{t \in[0, T]} \delta_\theta \bar{Y}_t\right)\right)\right\}\right] . 
        \end{align}
    \end{small}

    Set 
    \begin{small}
        \begin{align*}
            C_1:=\sup_{t \in[0, T]}|\bar{A}^0_{t}|+\frac{6C}{c} \sup_{t \in[0, T]} 
        \widehat{\mathbb{E}}\left[\sum_{i=1}^2|y_t^i|\right]+\frac{C}{c}\widehat{\mathbb{E}}\left[|\xi|\right],
        \end{align*}
    \end{small}where $\bar{A}^0$ is the second component of the solution to the Skorokhod problem 
    $\mathbb{SP}_{\bar{l}^0}^{\bar{r}^0}(\bar{s}^0)$ and 
    \begin{small}
        \begin{align*}
            \bar{l}^0(t,x)=l^0(T-t,x)=\widehat{\mathbb{E}}\left[L(T-t,x)\right], 
            \bar{r}^0(t,x)=r^0(T-t,x)=\widehat{\mathbb{E}}\left[R(T-t,x)\right], 
            \bar{s}^0=\widehat{\mathbb{E}}[\xi].
        \end{align*}
    \end{small}Note that \eqref{slr}, for simplicity, denote $s^i=s^{Y^i},l^i=l^{Y^i},r^i=r^{Y^i},i=1,2$ and 
    $a=\widehat{\mathbb{E}}\left[|\xi|\right]$. Define $\bar{s}^i,\bar{l}^i,\bar{r}^i,i=1,2,$
    as in \eqref{bar-slr}. Let $\bar{A}^i$ be the second component of the solution to 
    the Skorokhod problem $\mathbb{SP}_{\bar{l}^i}^{\bar{r}^i}(\bar{s}^i)$, $i=1,2$. 
    Then, we have
    \begin{small}
        \begin{align*}
            Y_t^i=y_t^i+\bar{A}^{i}_{T-t}, \quad \forall t \in[0, T].
        \end{align*}
    \end{small}Hence, we derive that  
    \begin{small}
        \begin{align}\label{delta-Y}
            \left|\delta_{\theta}Y_{t}\right|&=
            \left|\frac{\left(y_{t}^{1}+\bar{A}^{1}_{T-t}\right)-
            \theta\left(y_{t}^{2}+\bar{A}^{2}_{T-t}\right)}{1-\theta}\right|\notag\\
            &\leq \left|\delta_{\theta}y_{t}\right|+\frac{\theta}{1-\theta}
            \sup_{t\in[0,T]}\left|\bar{A}^{1}_{t}-\bar{A}^{2}_{t}\right|
            +\sup_{t\in[0,T]}\left|\bar{A}^{1}_{t}-\bar{A}^{0}_{t}\right|+
            \sup_{t\in[0,T]}\left|\bar{A}^{0}_{t}\right|.
        \end{align}
    \end{small}On the one hand, by Theorem \ref{thm-SP-K1-K2}, we have
    \begin{small}
        \begin{equation*}%\label{re-6}
            \sup_{t\in[0,T]}\left|\bar{A}^{1}_{t}-\bar{A}^{0}_{t}\right|\leq\frac{C}{c}\sup_{t\in[0,T]}\left|\bar{s}^{1}_{t}-\bar{s}^{0}_{t}\right|+\frac{1}{c}\left(\bar{L}^0_T\vee \bar{R}^0_T\right),%\leq \frac{3C}{c}\sup_{t\in[0,T]}\mathbf{E}\left[\left|y_t^2\right| \right]+\frac{C}{c}\mathbf{E}[\left|\xi\right|].   
        \end{equation*}
    \end{small}where 
    \begin{small}
        \begin{align*}
            \bar{L}_T^0=\sup_{(t,x)\in[0,T]\times \mathbb{R}}|\bar{l}^1(t,x)-\bar{l}^0(t,x)|, \ %=\sup_{(t,x)\in[0,T]\times \mathbb{R}}|l^2(t,x)-l^0(t,x)|,\\
            \bar{R}_T^0=\sup_{(t,x)\in[0,T]\times \mathbb{R}}|\bar{r}^1(t,x)-\bar{r}^0(t,x)|.%=\sup_{(t,x)\in[0,T]\times \mathbb{R}}|r^2(t,x)-r^0(t,x)|.
         \end{align*}
    \end{small}Moreover, according to the definition of $\bar{s}^i$, $\bar{l}^i$, $\bar{r}^i$ for $i=0,1$, we obtain that
    \begin{small}
        \begin{align}\label{A1-A0}
            \sup_{t\in[0,T]}\left|\bar{A}^{1}_{t}-\bar{A}^{0}_{t}\right| \leq 
            \frac{3C}{c}\sup_{t\in[0,T]}\widehat{\mathbb{E}}\left[\left|y_t^1\right| \right]
            +\frac{C}{c}\widehat{\mathbb{E}}[\left|\xi\right|].
         \end{align}
    \end{small}On the other hand, using similar method as \eqref{hat-bar-A}, we get
    \begin{small}
        \begin{align}\label{A1-A2}
            \frac{\theta}{1-\theta}\sup_{t\in[0,T]}\left|\bar{A}^{1}_{t}-\bar{A}^{2}_{t}\right|
            &\leq \frac{\theta}{1-\theta}\frac{3C}{c}\sup_{t \in[0, T]}\widehat{\mathbb{E}}[|y_t^1-y_t^2|]\notag\\
            &\leq \frac{3C}{c}\sup_{t\in[0,T]}\widehat{\mathbb{E}} \left[\left|\frac{ y_t^1-\theta y_t^2+(1-\theta) y_t^1 }{1-\theta}\right|\right]\notag\\
            &\leq \frac{3C}{c}\sup_{t\in[0,T]}\widehat{\mathbb{E}}\left[|\delta_{\theta}y_{t}|\right]+\frac{3C}{c}\sup_{t\in[0,T]}\widehat{\mathbb{E}}\left[|y_t^1|\right].   
        \end{align}
    \end{small}Thus, take \eqref{A1-A0} and \eqref{A1-A2} into \eqref{delta-Y}, we obtain 
    \begin{small}
        \begin{align}\label{delta-theta-Y}
            \left|\delta_{\theta}Y_{t}\right|\leq C_1+|\delta_{\theta}y_{t}|+\frac{3C}{c}\sup_{t\in[0,T]}\widehat{\mathbb{E}}\left[|\delta_{\theta}y_{t}|\right].
        \end{align}
    \end{small}Similarly, we have
    \begin{small}
        \begin{align*}
            \left|\delta_{\theta}\tilde{Y}_{t}\right|\leq C_1+|\delta_{\theta}\tilde{y}_{t}|+\frac{3C}{c}\sup_{t\in[0,T]}\widehat{\mathbb{E}}\left[|\delta_{\theta}\tilde{y}_{t}|\right].
        \end{align*}
    \end{small}It follows from the above analysis that 
    \begin{small}
        \begin{align*}
            \delta_{\theta}\bar{Y}_{t}\leq 2C_1+\delta_{\theta}\bar{y}_{t}+\frac{6C}{c}
            \widehat{\mathbb{E}}\left[\sup_{t\in[0,T]}\delta_{\theta}\bar{y}_{t}\right].
        \end{align*}
    \end{small}Hence, we deduce that 
    \begin{small}
        \begin{align*}
            \widehat{\mathbb{E}} & {\left[\exp \left\{3 p L_2 \tilde{\sigma}^2 \sup_{t \in[0, T]} \delta_\theta \bar{Y}_t\right\}\right] } \\
            & \leq e^{6 p L_2 \tilde{\sigma}^2 C_1 }
            \widehat{\mathbb{E}}\left[\exp \left\{3 p L_2 \tilde{\sigma}^2 
            \sup_{t \in[0, T]} \delta_\theta \bar{y}_t\right\}\right] 
            \widehat{\mathbb{E}}\left[\exp \left\{18\frac{Cp L_2 \tilde{\sigma}^2}{c}  
            \sup_{t \in[0, T]} \delta_\theta \bar{y}_t\right\}\right] \\
            & \leq e^{6 p L_2 \tilde{\sigma}^2 C_1 }\widehat{\mathbb{E}}
            \left[\exp \left\{(6+36\frac{C}{c}) p L_2 \tilde{\sigma}^2 
            \sup_{t \in[0, T]} \delta_\theta \bar{y}_t\right\}\right] \\
            & \leq \hat{A}_G \widehat{\mathbb{E}}\left[\exp \left\{(48+288\frac{C}{c}) 
            p L_2 \hat{\sigma}^2\left(|\xi|+\widetilde{\chi}+C_1+L_1T
            \left(\sup_{t \in[t, T]} \delta_\theta \bar{Y}_t\right)\right)\right\}\right],
        \end{align*}
    \end{small}in which we have used \eqref{delta-bar-y} in the third inequality.

    Therefore, when $T$ is small enough such that $(32+192\frac{C}{c}) L_1T \hat{\sigma}^2 / \tilde{\sigma}^2<1$, 
    by H\"{o}lder's inequality under $G$-framework, we deduce that for any $p\geq1,$
    \begin{small}
        \begin{align*}
            \widehat{\mathbb{E}} & {\left[\exp \left\{3 p L_2 \tilde{\sigma}^2 \sup_{t \in[0, T]} \delta_\theta \bar{Y}_t\right\}\right] } \\
            \leq & \hat{A}_G \widehat{\mathbb{E}}\left[\exp \left\{(96+576\frac{C}{c}) 
            p L_2 \hat{\sigma}^2\left(|\xi|+\widetilde{\chi}+C_1\right)\right\}\right]^{\frac{1}{2}} \\
            & \times\left(\widehat{\mathbb{E}}\left[\exp \left\{(96+576\frac{C}{c}) 
            L_1T p L_2 \hat{\sigma}^2 \sup_{t \in[0, T]} \delta_\theta \bar{Y}_t\right\}\right]\right)^{\frac{1}{2}} \\
            \leq & \hat{A}_G \widehat{\mathbb{E}}\left[\exp \left\{(96+576\frac{C}{c}) 
            p L_2 \hat{\sigma}^2\left(|\xi|+\widetilde{\chi}+C_1\right)\right\}\right] \\
            & \times \widehat{\mathbb{E}}\left[\exp \left\{3 p L_2 \tilde{\sigma}^2 \sup_{t \in[0, T]} \delta_\theta \bar{Y}_t\right\}
            \right]^{(32+192\frac{C}{c}) L_1T \hat{\sigma}^2 / \tilde{\sigma}^2}.       
        \end{align*}
    \end{small}and the following holds for any $p\geq1$ and $\theta\in(0,1)$:
    \begin{small}
        \begin{align*}
            & \widehat{\mathbb{E}}\left[\exp \left\{3 p L_2 \tilde{\sigma}^2 \sup_{t \in[0, T]} 
            \delta_\theta \bar{Y}_t\right\}\right] \\
            & \quad \leq \widehat{\mathbb{E}}\left[\hat{A}_G \exp 
            \left\{(96+576\frac{C}{c}) p L_2 \hat{\sigma}^2\left(|\xi|+\widetilde{\chi}+C_1\right)
            \right\}\right]^{\frac{1}{1-(32+192\frac{C}{c}) L_1T \hat{\sigma}^2 / \tilde{\sigma}^2}}<\infty.
        \end{align*}
    \end{small}Note that $Y^1-Y^2=(1-\theta)(\delta_\theta Y-Y^2)$. It follows that 
    \begin{small}
        \begin{align*}
            & 3 \tilde{\sigma}^2 \widehat{\mathbb{E}}\left[\sup_{t \in[0, T]}\left|Y_t^1-Y_t^2\right|\right] \\
            & \quad \leq(1-\theta)\left(\frac{1}{L_2} \sup_{\theta \in(0,1)} 
            \widehat{\mathbb{E}}\left[\exp \left\{3 L_2 \tilde{\sigma}^2 
            \sup_{t \in[0, T]} \delta_\theta \bar{Y}_t\right\}\right]+
            3 \tilde{\sigma}^2 \widehat{\mathbb{E}}
            \left[\sup_{t \in[0, T]}\left|Y_t^2\right|\right]\right).
        \end{align*}
    \end{small}Letting $\theta \to 1$ yields that $Y^1=Y^2$ on small time interval $[0,T]$. 
    Then, applying $G$-It\^{o}'s formula to $|Y^1-Y^2|^2$, we have 
    $Z^1=Z^2,K^1=K^2$ and $A^1=A^2$ on small interval $[0,T]$. 

    The proof is complete.
\end{proof}

\begin{proof}[{\bf Proof of Theorem \ref{thm-UBT}}] 
    Similar to \cite{HTW22}, we will construct a solution through a sequence of
    quadratic $G$-BSDEs with bounded terminal value and generator.
    Denote by $g_0(t)=g(t,0,0)$ for convenience. Then for each positive integer
    $m\geq1$, set 
    \begin{small}
        \begin{align*}
            &l^{(m)}=(l\wedge m)\vee(-m) \text{ for } l=\xi, g_0(t),\\
            &g^{(m)}(t,y,z)=g(t,y,z)-g_0(t)+g_0^{(m)}(t).
        \end{align*}
    \end{small}

    It suffices to prove that the quadratic $G$-BSDE 
    with double mean reflections on a small interval $[t_0,t_0+h]$ admits 
    a solution. For the whole interval $[0,T]$, 
    similar to the analysis of Step 2 in the proof of the Theorem \ref{thm-BT}, 
    we can obtain existence on the 
    whole time interval. Therefore, we only deal with the case 
    on a small interval. The proof will be divided into the following parts.
    
    {\bf Step 1.} To begin, we first provide a representation 
    for $Y^{(m)}$ for $m\geq 1$. For any given $t_0\in[0,T]$, and let $h\in(0,T-t_0]$. Denote 
    $\xi^{(m)}=Y_{t_0+h}^{(m)}$, 
    then by Theorem \ref{thm-BT}, we know that 
    $\left(Y^{(m)}, Z^{(m)}, K^{(m)},A^{(m)}\right) \in \mathcal{E}_G(\mathbb{R})\times \mathcal{H}_G(\mathbb{R})\times \mathcal{L}_G(\mathbb{R})\times \mathcal{A}_D$ 
    is a unique solution to the following quadratic $G$-BSDE with double mean reflections 
    with data $(\xi^{(m)},g^{(m)})$ on $[t_0,t_0+h]$:
    \begin{small}
        \begin{equation}\label{Ytm}
            \begin{cases}
                Y_t^{(m)}=\xi^{(m)}+\int_t^{t_0+h} g^{(m)}\left(s, Y_s^{(m)}, Z^{(m)}_s\right) 
                d\langle B\rangle_s-\int_t^{t_0+h} Z_s^{(m)} dB_s -
                \left(K_{T}^{(m)}-K_t^{(m)}\right)+\left(A^{(m)}_T-A^{(m)}_t\right), \\
                \widehat{\mathbb{E}}[L(t,Y^{(m)}_t)]\leq 0 \leq 
                \widehat{\mathbb{E}}[R(t,Y^{(m)}_t)], \ t_0 \leq t \leq t_0+h,\\ 
                A_t^{(m)}=(A_t^R)^{(m)}-(A_t^L)^{(m)},\ 
                (A_t^R)^{(m)},(A_t^L)^{(m)}\in I[t_{0},t_0+h],  \\
                \int_{t_0}^{t_0+h}\widehat{\mathbb{E}}[R(t,Y_t^{(m)})]d(A_t^R)^{(m)}=
                \int_{t_0}^{t_0+h}\widehat{\mathbb{E}}[L(t,Y_t^{(m)})]d(A_t^L)^{(m)}=0.
            \end{cases}
        \end{equation}
    \end{small}
    
    Let $(y^{(m)},z^{(m)},k^{(m)})$ be the solution to the following quadratic $G$-BSDE
    \begin{small}
        \begin{align}\label{y-m}
            y_t^{(m)}=\xi^{(m)}+\int_t^{t_0+h} g^{(m)}\left(s, Y_s^{(m)}, z^{(m)}_s\right) 
                d\langle B\rangle_s-\int_t^{t_0+h} z_s^{(m)} dB_s -
                \left(k_{T}^{(m)}-k_t^{(m)}\right).
        \end{align}
    \end{small}Note that $A^{(m)}$ is a deterministic function, by Lemma B.1 in \cite{HTW22}, we 
    know that $ (Y_{.}^{(m)}-(A^{(m)}_{t_0+h}-A^{(m)}_{.}),Z_{.}^{(m)},K_{.}^{(m)})$ 
    is also a solution to $G$-BSDE \eqref{y-m} on $[t_0,t_0+h]$, which yields that 
    \begin{small}
        \begin{equation}\label{Y-y-relationship}
            \left(Y^{(m)}_t,Z^{(m)}_t,K^{(m)}_t\right)=
            \left(y^{(m)}_t+A^{(m)}_{t_0+h}-A^{(m)}_t,z^{(m)}_t,k^{(m)}_t\right),\quad \forall t\in [t_0,t_0+h].
        \end{equation}
    \end{small}

    For any $(t,x)\in [t_0,t_0+h]\times\mathbb{R}$, we define
    \begin{small}
        \begin{equation}\label{slr-m}
            \begin{split}
                &s^{(m)}_t:=\widehat{\mathbb{E}}\left[\xi^{(m)}+
                \int_{t_0}^{t_0+h} g^{(m)}(s,Y_s^{(m)},z^{(m)}_s) d\langle B\rangle_s\right]
                -\widehat{\mathbb{E}}\left[\xi^{(m)}+\int_t^{t_0+h} 
                g^{(m)}(s,Y_s^{(m)},z^{(m)}_s) d\langle B\rangle_s\right]\\
                &\qquad=\widehat{\mathbb{E}}\left[y^{(m)}_{t_0}\right]-\widehat{\mathbb{E}}\left[y^{(m)}_t\right], 
                \ \ \ a:=\widehat{\mathbb{E}}\left[\xi^{(m)}\right]\\
                &l^{(m)}(t,x):=\widehat{\mathbb{E}}\left[L(t,y^{(m)}_t-\widehat{\mathbb{E}}\left[y^{(m)}_t\right]+x)\right], 
                r^{(m)}(t,x):=\widehat{\mathbb{E}}\left[R(t,y^{(m)}_t-\widehat{\mathbb{E}}\left[y^{(m)}_t\right]+x)\right].  
            \end{split}
        \end{equation}
    \end{small}We can check that $l^{(m)},r^{(m)}$ satisfy Assumption \ref{assumption-l-r} and the following holds:
    \begin{small}
        \begin{align*}
            l^{(m)}(t_0+h,a)=\widehat{\mathbb{E}}[L(t_0+h,y^{(m)}_{t_0+h})]\leq 0\leq 
            \widehat{\mathbb{E}}[R(t_0+h,y^{(m)}_{t_0+h})]=r^{(m)}(t_0+h,a).
        \end{align*}
    \end{small}Hence, Theorem \ref{thm-BSP} ensures that the backward 
    Skorokhod problem $\mathbb{BSP}_{l^{(m)}}^{r^{(m)}}(s^{(m)},a)$ 
    admits a unique solution $(x^{(m)},A^{(m)})$. 
    Recalling that for any $t\in[t_0,t_0+h]$, 
    $A^{(m)}_t=\bar{A}^{(m)}_h-\bar{A}^{(m)}_{t_0+h-t}$,  
    where $\bar{A}^{(m)}$ is the second component of the 
    solution to the Skorokhod problem 
    $\mathbb{SP}_{\bar{l}^{(m)}}^{\bar{r}^{(m)}}(\bar{s}^{(m)})$,  
    where 
    \begin{small}
        \begin{equation}\label{bar-slr-m}
            \begin{split}
            &\bar{s}^{(m)}_{t}=a+s^{(m)}_{t_0+h}-s^{(m)}_{t_0+h-t}=\widehat{\mathbb{E}}[y^{(m)}_{t_0+h-t}],\\ 
            &\bar{l}^{(m)}(t,x)=l^{(m)} (t_0+h-t,x), \ \bar{r}^{(m)}(t,x)=r^{(m)}(t_0+h-t,x).
            \end{split}
        \end{equation} 
    \end{small}Therefore, we obtain the following representation for $Y^{(m)}$,
    \begin{small}
        \begin{equation}\label{representation-Y-m}
            Y^{(m)}_t=y^{(m)}_t+A^{(m)}_{t_0+h}-A^{(m)}_t=y^{(m)}_t+\bar{A}^{(m)}_{t_0+h-t}, 
            \quad \forall t\in[t_0,t_0+h].   
        \end{equation}
    \end{small}

    {\bf Step 2. The uniform estimates.} 
    For any $p\geq1$, we claim that the following two estimates hold: 
    \begin{small}
        \begin{align}\label{estimate-y-m}
            \sup_{m}\widehat{\mathbb{E}}\left[\sup_{t\in [t_0,t_0+h]}
            \exp\left\{3pL_2\tilde{\sigma}^2\left|y_t^{(m)}\right|\right\}\right] 
            <\infty,
        \end{align}
    \end{small}and
    \begin{small}
        \begin{align}\label{estimate-Y-m}
            \sup_{m}\widehat{\mathbb{E}}\left[\sup_{t\in [t_0,t_0+h]}
            \exp\left\{3pL_2\tilde{\sigma}^2\left|Y_t^{(m)}\right|\right\}+\left(\int_{t_0}^{t_0+h}\left|Z_t^{(m)}\right|^2dt\right)^n
            +\left|K_{t_0+h}^{(m)}\right|^n+\left|A_{t_0+h}^{(m)}\right|\right]<\infty,\ \forall n\geq1.
        \end{align}
    \end{small}
    
    The proofs of inequalities \eqref{estimate-y-m} and \eqref{estimate-Y-m} 
    are provided in the Appendix.
    
    {\bf Step 3. $\mathbf{\theta}$-method.} Similar to Theorem \ref{uniqueness-MRGBSDE}, 
    we only consider the case when $g$ is convex in $z$.
    For each $\theta\in (0,1)$, we set
    \begin{small}
        \begin{align*}
            &\delta_\theta l^{(m,q)}:=\frac{l^{(m,q)}-\theta l^{(m)}}{1-\theta}, 
            \ \delta_\theta\tilde{l}:=\frac{l^{(m)}-\theta l^{(m,q)}}{1-\theta},\\
            &\qquad \delta_\theta\bar{l}:=|\delta_\theta l^{(m,q)}|+|\delta_\theta\tilde{l}^{(m,q)}|,
        \end{align*}
    \end{small}with $l=y,z,Y$. Then, we can deduce that 
    $(\delta_\theta y^{(m,q)},\delta_\theta z^{(m,q)},\frac{1}{1-\theta}k^{(m,q)})$ admits 
    the following $G$-BSDE:
    \begin{small}
        \begin{align*}
            \delta_\theta y_t^{(m, q)}= & \delta_\theta \xi^{(m, q)}+
            \frac{\theta}{1-\theta}\left(k_{t_0+h}^{(m)}-k_t^{(m)}\right)-
            \int_t^{t_0+h} \delta_\theta z_s^{(m, q)} d B_s 
            -\frac{1}{1-\theta}\left(k_{t_0+h}^{(m+q)}-k_t^{(m+q)}\right)\\
            &+\int_t^{t_0+h}\left(\delta_\theta g^{(m, q)}\left(s, \delta_\theta 
            Y_s^{(m, q)}, \delta_\theta z_s^{(m, q)}\right) 
             +\delta_\theta g_0^{(m, q)}(s)\right) d\langle B\rangle_s,
        \end{align*}
    \end{small}where 
    \begin{small}
        \begin{align*}
            \delta_\theta \xi^{(m, q)}= & \frac{\xi^{(m+q)}-\theta \xi^{(m)}}{1-\theta}, \\
            \delta_\theta g_0^{(m, q)}(t)= & \frac{1}{1-\theta}\left(g_0^{(m+q)}(t)-\theta g_0^{(m)}(t)\right)-g_0(t), \\
            \delta_\theta g^{(m, q)}(t, y, z)= & \frac{1}{1-\theta}\left(g\left(t,(1-\theta) y+\theta Y_t^{(m)},(1-\theta) z+\theta z_t^{(m)}\right)\right. \\
            & \left.-\theta g\left(t, Y_t^{(m)}, z_t^{(m)}\right)\right) .
        \end{align*}
    \end{small}By simple derivation, we have 
    \begin{small}
        \begin{align*}
            &\delta_\theta \xi^{(m, q)}  \leq|\xi|+\frac{1}{1-\theta}(|\xi|-m)^{+}, \ \ \ 
            \delta_\theta g_0^{(m, q)}(t)  \leq \frac{2}{1-\theta}\left(\left|g_0(t)\right|-m\right)^{+} ,\\
            &\delta_\theta g^{(m, q)}(t, \delta_\theta Y^{(m, q)}, z)\leq \alpha_t+\frac{L_2}{2}
            +2L_1|Y_t^{(m)}|+L_1|\delta_\theta Y^{(m, q)}|+\frac{3L_2}{2}|z|^2.
        \end{align*}
    \end{small}Denote
    \begin{align*}
        \chi^{(m, q)} & =\int_t^{t_0+h} \alpha_s d s+\frac{L_2 h}{2}+
        2 L_1 h\left(\sup_{t \in[t_0, t_0+h]}\left|Y_t^{(m)}\right|+\sup_{t \in[t_0, t_0+h]}\left|Y_t^{(m+q)}\right|\right) \\
        \zeta^{(m, q)} & =|\xi|+\frac{L_2 h}{2}+\int_t^{t_0+h} \alpha_s d s+L_1 h\left(\sup_{t \in[t_0, t_0+h]}\left|Y_t^{(m)}\right|+\sup_{t \in[t_0, t_0+h]}\left|Y_t^{(m+q)}\right|\right),\\
        \rho(\theta,m)&=\frac{1}{1-\theta}(|\xi|-m)^{+}+\frac{2}{1-\theta}\int_{t}^{t_0+h}\left(\left|g_0(s)\right|-m\right)^{+}ds.
    \end{align*}

    Taking 
    $\beta_t=\alpha_t+\frac{L_2}{2}+2L_1|Y_t^{(m)}|+L_1|\delta_\theta Y_t^{(m, q)}|$ 
    and $\gamma=3L_2$ in assertion $(ii)$ of Lemma \ref{lemma-priori-estimate}, we derive that
    \begin{small}
        \begin{align*}
            &\exp \left\{3 p L_2  \tilde{\sigma}^2\left(\delta_\theta y_t^{(m, q)}
            \right)^{+}\right\} \\
            \leq & \widehat{\mathbb{E}}_t\left[\exp \left\{3 p L_2 
            \hat { \sigma } ^ { 2 } \left(|\xi|+\rho(\theta, m)+\frac{L_2 h}{2}
            \right.\right.\right. 
             \left.\left.\left.+\int_t^{t_0+h}\left(\alpha_s+
            2 L_1\left|Y_s^{(m)}\right|+L_1\left| \delta_\theta Y_s^{(m, q)} 
            \right|\right) d s\right)\right\}\right] \\
            \leq & \widehat{\mathbb{E}}_t\left[\exp \left\{3 p L_2
            \hat { \sigma } ^ { 2 } \left(|\xi|+\rho(\theta, m)+\chi^{(m, q)}\right.\right.\right.
             \left.\left.\left.+L_1h\left(\sup_{t \in[t_0, t_0+h]}\left|
                \delta_\theta Y_t^{(m, q)}\right|\right)\right)\right\}\right],
        \end{align*}
    \end{small}Similarly, we get 
    \begin{small}
        \begin{align*}
            &\exp \left\{3 p L_2  \tilde{\sigma}^2\left(\delta_\theta \tilde{y}_t^{(m, q)}
            \right)^{+}\right\}\\
            &\leq \widehat{\mathbb{E}}_t\left[\exp \left\{3 p L_2
            \hat { \sigma } ^ { 2 } \left(|\xi|+\rho(\theta, m)+
            \chi^{(m, q)}
            +L_1h\left(\sup_{t \in[t_0, t_0+h]}\left|
            \delta_\theta \tilde{Y}_t^{(m, q)}\right|\right)\right)\right\}\right].
        \end{align*}
    \end{small}Then, by the following facts
    \begin{small}
        \begin{align*}
            \left(\delta_\theta y^{(m, q)}\right)^{-} \leq\left(\delta_\theta \tilde{y}^{(m, q)}\right)^{+}
            +2\left|y^{(m, q)}\right| \text { and }\left(\delta_\theta \tilde{y}^{(m, q)}\right)^{-} 
            \leq\left(\delta_\theta y^{(m, q)}\right)^{+}+2\left|y^{(m)}\right|,
        \end{align*}
    \end{small}we conclude that 
    \begin{small}
        \begin{align*}
            &\exp \left\{ 3 p L_2 \tilde{\sigma}^2\left|\delta_\theta y_t^{(m, q)}
            \right|\right\} \vee \exp \left\{3 p L_2 \tilde{\sigma}^2
            \left|\delta_\theta \tilde{y}_t^{(m, q)}\right|\right\} \\
            \leq & \exp \left\{3 p L_2 \tilde{\sigma}^2\left(
                \left(\delta_\theta y_t^{(m, q)}\right)^{+}+\left(
                    \delta_\theta \tilde{y}_t^{(m, q)}\right)^{+}+
                    2\left|y_t^{(m)}\right|+2\left|y_t^{(m+q)}\right|\right)\right\} \\
            \leq & \left(\widehat{\mathbb{E}}_t\left[\exp \left\{3 p L_2 
            \hat{\sigma}^2\left(|\xi|+\rho(\theta, m)+\chi^{(m, q)}+L_1h
            \left(\sup_{t \in[t_0, t_0+h]}\delta_\theta \bar{Y}_t^{(m, q)}\right)\right)\right\}\right]\right)^2 \\
            & \times \exp \left\{6 p L_2 \tilde{\sigma}^2
            \left(\left|y_t^{(m)}\right|+\left|y_t^{(m+q)}\right|\right)\right\} \\
            \leq & \widehat{\mathbb{E}}_t\left[\exp \left\{3 p L_2 \hat{\sigma}^2
            \left(|\xi|+\rho(\theta, m)+\chi^{(m, q)}+L_1h\left(\sup_{t \in[t_0, t_0+h]} 
            \delta_\theta \bar{Y}_t^{(m, q)}\right)\right)\right\}\right]^2 \\
            & \times \widehat{\mathbb{E}}_t\left[\exp \left\{12 p L_2 
            \hat{\sigma}^2 \zeta^{(m, q)}\right\}\right] .
        \end{align*}
    \end{small}

    Using Theorem \ref{Doob-inequality}, Remark \ref{Doob-exp} and H\"{o}lder's inequality under $G$-framework, we obtain 
    that for each $p\geq1$ and $t\in[0,T],$
    \begin{small}
        \begin{align}
            &\widehat{\mathbb{E}}\left[\exp\left\{3pL_2\tilde{\sigma}^2
            \sup_{t \in[t_0, t_0+h]}\delta_\theta \bar{y}_t^{(m, q)}
            \right\}\right]\notag\\
            \leq& \widehat{\mathbb{E}}\Biggl[\sup_{t \in[t_0, t_0+h]}\widehat{\mathbb{E}}_t 
            \Biggl[\exp\Biggl\{6 p L_2 \hat{\sigma}^2
            \left(|\xi|+\rho(\theta, m)+\chi^{(m, q)}+L_1h\left(\sup_{t \in[t_0, t_0+h]} 
            \delta_\theta \bar{Y}_t^{(m, q)}\right)\right)\Biggr\}\Biggr]^2\notag\\
            &\qquad\times\sup_{t \in[t_0, t_0+h]}\widehat{\mathbb{E}}_t\left[\exp \left\{24 p L_2 
            \hat{\sigma}^2 \zeta^{(m, q)}\right\}\right]\Biggr]\notag\\
            \leq&\hat{A}_G\Biggl(\widehat{\mathbb{E}}\Biggl[\exp\Biggl\{
                48p L_2 \hat{\sigma}^2\Biggl(
                    |\xi|+\rho(\theta, m)+\chi^{(m, q)}+L_1h\left(\sup_{t \in[t_0, t_0+h]} 
            \delta_\theta \bar{Y}_t^{(m, q)}\right)
                \Biggr)
            \Biggr\}\Biggr]\Biggr)^{\frac{1}{2}}\notag\\
            &\qquad\times\left(\widehat{\mathbb{E}}\left[\exp \left\{96 p L_2 
            \hat{\sigma}^2 \zeta^{(m, q)}\right\}\right]\right)^{\frac{1}{2}}.\label{bar-y-mq}
        \end{align}
    \end{small}

    Set 
    \begin{small}
        \begin{align*}
            C_8(\xi)=\sup_{t \in[t_0, t_0+h]}\left|\bar{A}_t^3\right|+\frac{6C}{c}\sup_{t \in[t_0, t_0+h]}
            \widehat{\mathbb{E}}\left[\left|y_t^{(m, q)}\right|+\left|y_t^{(m)}\right|\right]
            +\frac{6C}{c}\widehat{\mathbb{E}}\left[|\xi|\right].
        \end{align*}
    \end{small}By similar idea as the proof of inequality \eqref{delta-theta-Y}, we have 
    \begin{small}
        \begin{align*}
            &\left|\delta_{\theta}Y_{t}^{(m, q)}\right|\leq C_8(\xi)+\left|\delta_{\theta}y_{t}^{(m, q)}\right|
            +\frac{3C}{c}\sup_{t \in[t_0, t_0+h]}\widehat{\mathbb{E}}\left[\left|\delta_{\theta}y_{t}^{(m, q)}\right|\right],\\
            &\left|\delta_{\theta}\tilde{Y}_{t}^{(m, q)}\right|\leq C_8(\xi)+\left|\delta_{\theta}\tilde{y}_{t}^{(m, q)}\right|
            +\frac{3C}{c}\sup_{t \in[t_0, t_0+h]}\widehat{\mathbb{E}}\left[\left|\delta_{\theta}\tilde{y}_{t}^{(m, q)}\right|\right],
        \end{align*}
    \end{small}which yields that 
    \begin{small}
        \begin{align*}
            \delta_{\theta}\bar{Y}_{t}^{(m, q)}\leq 2C_8(\xi)+\delta_{\theta}\bar{y}_{t}^{(m, q)}
            +\frac{6C}{c}\sup_{t \in[t_0, t_0+h]}\widehat{\mathbb{E}}\left[\delta_{\theta}\bar{y}_{t}^{(m, q)}\right].
        \end{align*}
    \end{small}Then, we have
    \begin{small}
        \begin{align*}
            &\widehat{\mathbb{E}}\left[\exp\left\{3pL_2\tilde{\sigma}^2
            \sup_{t \in[t_0, t_0+h]}\delta_{\theta}\bar{Y}_{t}^{(m, q)}\right\}\right]\\
            \leq&\exp\left\{6pL_2\tilde{\sigma}^2C_8(\xi)\right\}\widehat{\mathbb{E}}
            \left[\exp\left\{3pL_2\tilde{\sigma}^2\sup_{t \in[t_0, t_0+h]}\delta_{\theta}\bar{y}_{t}^{(m, q)}\right\}\right]
            \widehat{\mathbb{E}}\left[\exp\left\{18\frac{C}{c}pL_2\tilde{\sigma}^2\sup_{t \in[t_0, t_0+h]}
            \delta_{\theta}\bar{y}_{t}^{(m, q)}\right\}\right]\\
            \leq&\exp\left\{6pL_2\tilde{\sigma}^2C_8(\xi)\right\}\widehat{\mathbb{E}}\left[\exp
            \left\{\left(6+36\frac{C}{c}\right)pL_2\tilde{\sigma}^2\sup_{t \in[t_0, t_0+h]}
            \delta_{\theta}\bar{y}_{t}^{(m, q)}\right\}\right]\\
            \leq&\hat{A}_G\Biggl(\widehat{\mathbb{E}}\Biggl[\exp\Biggl\{
                \left(96+576\frac{C}{c}\right)p L_2 
                \hat{\sigma}^2\Biggl(|\xi|+\rho(\theta, m)+C_8(\xi)+\chi^{(m, q)}+L_1h\left(\sup_{t \in[t_0, t_0+h]} 
                \delta_\theta \bar{Y}_t^{(m, q)}\right)\Biggr)
            \Biggr\}\Biggr]\Biggr)^{\frac{1}{2}}\\
            &\qquad\times \left(\widehat{\mathbb{E}}\left[\exp \left\{\left(192+1152\frac{C}{c}\right) p L_2 
            \hat{\sigma}^2 \zeta^{(m, q)}\right\}\right]\right)^{\frac{1}{2}}\\
            \leq&\hat{A}_G\left(\widehat{\mathbb{E}}\left[\exp\left\{
                \left(192+1152\frac{C}{c}\right)pL_2\hat{\sigma}^2\rho(\theta, m)
            \right\}\right]\right)^{\frac{1}{4}}\times 
            \left(\widehat{\mathbb{E}}\left[\exp \left\{\left(192+1152\frac{C}{c}\right) p L_2 
            \hat{\sigma}^2 \zeta^{(m, q)}\right\}\right]\right)^{\frac{1}{2}}\\
            &\qquad\times \left(\widehat{\mathbb{E}}\left[\exp\left\{
                \left(384+2304\frac{C}{c}\right)pL_2\hat{\sigma}^2
            \left(L_1h\left(\sup_{t \in[t_0, t_0+h]} 
            \delta_\theta \bar{Y}_t^{(m, q)}\right)\right)\right\}\right]\right)^{\frac{1}{8}}\\
            &\qquad\times\left(\widehat{\mathbb{E}}\left[\exp\left\{
                \left(384+2304\frac{C}{c}pL_2\hat{\sigma}^2\left(|\xi|+C_8(\xi)+\chi^{(m, q)}\right)\right)
            \right\}\right]\right)^{\frac{1}{8}}.
        \end{align*}
    \end{small}Let $h$ such that $(128+768C/c)L_1h\hat{\sigma}^2/\tilde{\sigma}^2<1,$ 
    then for the small time interval $[t_0,t_0+h]$, we have
    \begin{small}
        \begin{align*}
            &\widehat{\mathbb{E}}\left[\exp\left\{3pL_2\tilde{\sigma}^2
            \sup_{t \in[t_0, t_0+h]}\delta_{\theta}\bar{Y}_{t}^{(m, q)}\right\}\right]\\
            \leq& \hat{A}_p\left(\widehat{\mathbb{E}}\left[\exp\left\{
                \left(192+1152\frac{C}{c}\right)pL_2\hat{\sigma}^2\rho(\theta, m)
            \right\}\right]\right)^{\frac{1}{4}}\\
            &\qquad\times\left(\widehat{\mathbb{E}}\left[\exp\left\{3pL_2\tilde{\sigma}^2
            \sup_{t \in[t_0, t_0+h]}\delta_{\theta}\bar{Y}_{t}^{(m, q)}\right\}\right]\right)^{\left(
                16+96\frac{C}{c}L_1h\hat{\sigma}^2/\tilde{\sigma}^2\right)},
        \end{align*}
    \end{small}where 
    \begin{small}
        \begin{align*}
            \hat{A}_p&=\hat{A}_G\sup_{m,q\leq1}\Biggl[\left(\widehat{\mathbb{E}}\left[\exp\left\{
                \left(384+2304\frac{C}{c}pL_2\hat{\sigma}^2\left(|\xi|+C_8+\chi^{(m, q)}\right)\right)
            \right\}\right]\right)^{\frac{1}{8}}\\
            &\qquad\times \left(\widehat{\mathbb{E}}\left[\exp \left\{\left(192+1152\frac{C}{c}\right) p L_2 
            \hat{\sigma}^2 \zeta^{(m, q)}\right\}\right]\right)^{\frac{1}{2}}\Biggr]<\infty.
        \end{align*}
    \end{small}
    
    It follows that 
    \begin{small}
        \begin{align*}
            &\widehat{\mathbb{E}}\left[\exp\left\{3pL_2\tilde{\sigma}^2
            \sup_{t \in[t_0, t_0+h]}\delta_{\theta}\bar{Y}_{t}^{(m, q)}\right\}\right]\\
            \leq&\hat{A}_p^{\frac{1}{1-(128+768C/c)L_1h\hat{\sigma}^2/\tilde{\sigma}^2}}
            \left(\widehat{\mathbb{E}}\left[\exp\left\{
                    \left(192+1152\frac{C}{c}\right)pL_2\hat{\sigma}^2\rho(\theta, m)
                \right\}\right]\right)^{\frac{1}{4\left(1-(128+768C/c)L_1h\hat{\sigma}^2/\tilde{\sigma}^2\right)}}\\
            \leq&\hat{A}_p^{\frac{1}{1-(128+768C/c)L_1h\hat{\sigma}^2/\tilde{\sigma}^2}}
            \Biggl(\widehat{\mathbb{E}}\Biggl[\exp\Biggl\{
                \frac{\left(192+1152\frac{C}{c}\right)pL_2\hat{\sigma}^2}{1-\theta}\\
                &\qquad\times \left((|\xi|-m)^{+}+2\int_{t}^{t_0+h}\left(\left|g_0(s)\right|-m\right)^{+}ds\right)
            \Biggr\}\Biggr]\Biggr)^{\frac{1}{4\left(1-(128+768C/c)L_1h\hat{\sigma}^2/\tilde{\sigma}^2\right)}}.
        \end{align*}
    \end{small}
    
    {\bf Step 4. The convergence on small time interval $\mathbf{[t_0,t_0+h]}$.} It is easy to verify 
    that as $m\to\infty$, we have 
    \begin{small}
        \begin{align*}
            \exp\Biggl\{
                    \frac{\left(192+1152\frac{C}{c}\right)pL_2\hat{\sigma}^2}{1-\theta}
                    \left((|\xi|-m)^{+}+2\int_{t}^{t_0+h}\left(\left|g_0(s)\right|-m\right)^{+}ds\right)
                \Biggr\}\in L_G^1(\Omega)\downarrow 1.
        \end{align*}
    \end{small}By the Assertion $(ii)$ of Theorem \ref{monotone-con-thm}, for each $p\geq1$ and $\theta\in(0,1)$, 
    we get 
    \begin{small}
        \begin{align*}
            \lim_{m\to\infty}\widehat{\mathbb{E}}\left[\exp\Biggl\{
                    \frac{\left(192+1152\frac{C}{c}\right)pL_2\hat{\sigma}^2}{1-\theta}
                    \left((|\xi|-m)^{+}+2\int_{t}^{t_0+h}\left(\left|g_0(s)\right|-m\right)^{+}ds\right)
                \Biggr\}\right]= 1.
        \end{align*}
    \end{small}Hence, we obtain that 
    \begin{small}
        \begin{align}
            \limsup_{m\to\infty}\sup_{q\geq1}\widehat{\mathbb{E}}\left[\exp\left\{3pL_2\tilde{\sigma}^2
            \sup_{t \in[t_0, t_0+h]}\delta_{\theta}\bar{Y}_{t}^{(m, q)}\right\}\right]\leq
            \hat{A}_p^{\frac{1}{1-(128+768C/c)L_1h\hat{\sigma}^2/\tilde{\sigma}^2}}.\label{bar-Y-mq}
        \end{align}
    \end{small}
    
    Note the fact that 
    \begin{small}
        \begin{align*}
            Y^{(m, q)}-Y^{(m)}=(1-\theta)(\delta_{\theta}Y^{(m, q)}-Y^{(m)}),
        \end{align*}
    \end{small}Thus, we have 
    \begin{small}
        \begin{align*}
            &\limsup_{m\to\infty}\sup_{q\geq1}\widehat{\mathbb{E}}\left[\sup_{t\in[t_0, t_0+h]}
            \left|Y_t^{(m, q)}-Y_t^{(m)}\right|^n\right]\\
            =&\limsup_{m\to\infty}\sup_{q\geq1}\widehat{\mathbb{E}}\left[(1-\theta)^n
            \sup_{t\in[t_0, t_0+h]}\left|\delta_{\theta}Y_t^{(m, q)}-Y_t^{(m)}\right|^n\right]\\
            \leq&\limsup_{m\to\infty}\sup_{q\geq1}2^{n-1}(1-\theta)^n\widehat{\mathbb{E}}
            \left[\left(\sup_{t\in[t_0, t_0+h]}\delta_{\theta}\bar{Y}_{t}^{(m, q)}\right)^n+
            \sup_{t\in[t_0, t_0+h]}\left|Y_t^{(m)}\right|^n\right]\\
            \leq&\limsup_{m\to\infty}\sup_{q\geq1}2^{n-1}(1-\theta)^n\left(
                \frac{n!}{L_2^n}\widehat{\mathbb{E}}\left[\exp\left\{L_2
                \sup_{t\in[t_0, t_0+h]}\delta_{\theta}\bar{Y}_{t}^{(m, q)}\right\}\right]
                +\sup_{m\geq1}\widehat{\mathbb{E}}\left[\sup_{t\in[t_0, t_0+h]}\left|Y_t^{(m)}\right|^n\right]
            \right)\\
            \leq&2^{n-1}(1-\theta)^n\left(\frac{n!}{3\tilde{\sigma}^2L_2^n}
            \hat{A}_1^{\frac{1}{1-(128+768C/c)L_1h\hat{\sigma}^2/\tilde{\sigma}^2}}+
            \sup_{m\geq1}\widehat{\mathbb{E}}\left[\sup_{t\in[t_0, t_0+h]}\left|Y_t^{(m)}\right|^n\right]\right).
        \end{align*}
    \end{small}By the arbitrariness of $\theta$ and \eqref{estimate-Y-m}, we could 
    find a continuous process $Y$ such that 
    \begin{small}
        \begin{align}\label{Y-m-converge-Y}
            \lim_{m\to\infty}\widehat{\mathbb{E}}\left[\sup_{t\in[t_0, t_0+h]}
                \left|Y_t^{(m)}-Y_t\right|^n\right]=0, \forall n\geq1.
        \end{align}
    \end{small}

    Applying It\^{o}'s formula to $|Y_t^{(m,q)}-Y_t^{(m)}|^2$, we have
    \begin{small}
        \begin{align}\label{Zmq-Zm}
            &\widehat{\mathbb{E}}\left[\int_{t_0}^{t_0+h}\left|Z_t^{(m,q)}-Z_t^{(m)}
            \right|^2dt\right]\notag\\
            &\leq C_9 \widehat{\mathbb{E}}\left[\sup_{t\in[t_0, t_0+h]}
            \left|Y_t^{(m, q)}-Y_t^{(m)}\right|^2+\sup_{t\in[t_0, t_0+h]}
            \left|Y_t^{(m, q)}-Y_t^{(m)}\right|\Gamma^{(m,q)}\right]\\
            &\leq C_9 \widehat{\mathbb{E}}\left[\sup_{t\in[t_0, t_0+h]}
            \left|Y_t^{(m, q)}-Y_t^{(m)}\right|^2\right]+C_9\left(\widehat{\mathbb{E}}\left[
                \left|\Gamma^{(m,q)}\right|^2
            \right]\right)^{\frac{1}{2}}\left(\widehat{\mathbb{E}}\left[\sup_{t\in[t_0, t_0+h]}
            \left|Y_t^{(m, q)}-Y_t^{(m)}\right|^2\right]\right)^{\frac{1}{2}},\notag
        \end{align}
    \end{small}where 
    \begin{small}
        \begin{align*}
            \Gamma^{(m,q)}=&\int_{t_0}^{t_0+h}\left|g^{(m, q)}\left(t,Y_t^{(m, q)},Z_t^{(m,q)}\right)
            -g^{(m, q)}\left(t,Y_t^{(m)},Z_t^{(m)}\right)\right|d \langle B\rangle_t\\
            &+\left|K_{t_0+h}^{(m,q)}\right|+\left|K_{t_0+h}^{(m)}\right|+\left|A_{t_0+h}^{(m,q)}\right|
            +\left|A_{t_0+h}^{(m)}\right|.
        \end{align*}
    \end{small}In spirit of Assumption \ref{assumption-UBT} and Inequality \eqref{estimate-Y-m},
    we see that $\sup\nolimits_{m,q}\widehat{\mathbb{E}}[|\Gamma^{(m,q)}|^2]<\infty.$
    Thus, by \eqref{Y-m-converge-Y} and \eqref{Zmq-Zm}, there is a process $Z\in M_G^2(t_0,t_0+h)$
    such that 
    \begin{small}
        \begin{align*}
            \lim_{m\to\infty}\widehat{\mathbb{E}}\left[
                \int_{t_0}^{t_0+h}\left|Z_t^{(m)}-Z_t
                \right|^2dt
            \right]=0.
        \end{align*}
    \end{small}By Assertion $(i)$ of Theorem \ref{monotone-con-thm} and \eqref{estimate-Y-m},
    we have that $Z\in \mathcal{H}_G(\mathbb{R})$, i.e., for any $n\geq1$
    \begin{small}
        \begin{align*}
            \widehat{\mathbb{E}}\left[
                \left(\int_{t_0}^{t_0+h}\left|Z_t
                \right|^2dt\right)^n
            \right]<\infty,
        \end{align*}
    \end{small}which together with Lemma \ref{lemma-HTW22} implies that 
    \begin{small}
        \begin{align*}
            \lim_{m\to\infty}\widehat{\mathbb{E}}\left[
                \left(\int_{t_0}^{t_0+h}\left|Z_t^{(m)}-Z_t
                \right|^2dt\right)^n
            \right]<\infty,\ \forall n\geq1.
        \end{align*}
    \end{small}

    By the similar idea from the proof of Theorem \ref{thm-BT}, we have 
    \begin{small}
        \begin{equation*}%\label{AA313'}
            \begin{split}
            &\left|(A_{t_0+h}^{(m,q)}-A_{t}^{(m,q)})-(A_{t_0+h}^{(m)}-A_{t}^{(m)})\right|
            =\left|\bar{A}^{(m,q)}_{t_0+h-t}-\bar{A}^{(m)}_{t_0+h-t}\right|\\
            \leq &\frac{C}{c}\sup_{t\in[t_0,t_0+h]}|\bar{s}^{(m,q)}_t-\bar{s}^{(m)}_t|
            +\frac{1}{c}\sup_{(t,x)\in[t_0,t_0+h]\times \mathbb{R}}\left(
            |\widehat{\bar{l}}^{(m,q)}(t,x)|\vee|\widehat{\bar{r}}^{(m,q)}(t,x)|\right)\\
            =&\frac{C}{c}\sup_{t\in[t_0,t_0+h]}\Big|{s}^{(m,q)}_{t_0+h}-{s}^{(m,q)}_{t_0+h-t}-
            \big({s}^{(m)}_{t_0+h}-{s}^{(m)}_{t_0+h-t}\big)\Big|+
            \frac{1}{c}\sup_{(t,x)\in[t_0,t_0+h]\times \mathbb{R}}
            \Big(|\widehat{l}^{(m,q)}(t,x)|\vee|\widehat{r}^{(m,q)}(t,x)|\Big)\\
            \leq&C_{10}\sup_{t\in[t_0,t_0+h]}\widehat{\mathbb{E}}\left[
                \left|y_t^{(m,q)}-y_t^{(m)}\right|\right],
            \end{split}
        \end{equation*}
    \end{small}where $\hat{h}^{(m,q)}=h^{(m,q)}(t,x)-h^{(m)}(t,x)$, for $h=l,r,\bar{l},\bar{r}$.
    Moreover, we derive that for any $n\geq1$
    \begin{small}
        \begin{align*}
            \sup_{t\in[t_0,t_0+h]}\left|A_{t}^{(m,q)}-A_{t}^{(m)}\right|&\leq
            \sup_{t\in[t_0,t_0+h]}\left|(A_{t_0+h}^{(m,q)}-A_{t}^{(m,q)})-(A_{t_0+h}^{(m)}-A_{t}^{(m)})\right|+\left|A_{t_0+h}^{(m,q)}-A_{t_0+h}^{(m)}\right|\\
            &\leq C_{11} \sup_{t\in[t_0,t_0+h]}\widehat{\mathbb{E}}\left[
                \left|y_t^{(m,q)}-y_t^{(m)}\right|\right].
        \end{align*}
    \end{small}Note that $y^{(m)}$ can be seen as a solution to a standard quadratic $G$-BSDE \eqref{y-m} with terminal $\xi$ and 
    generator $g^{(m)}(t,Y_t^{(m)},z_t^{(m)})$. 
    %Similar to show $Y^{(m)}$ is a Cauchy sequence, we can prove 
    We still need to show that $y^{(m)}$ is a Cauchy sequence. Due to $p\geq1$ is arbitrary, $\tilde{\sigma}^2$ 
    can be replaced by $\hat{\sigma}^2$ here.
    By \eqref{bar-y-mq}, we have
    \begin{small}
        \begin{align*}
            &\widehat{\mathbb{E}}\left[\exp\left\{3pL_2\tilde{\sigma}^2
            \sup_{t \in[t_0, t_0+h]}\delta_\theta \bar{y}_t^{(m, q)}
            \right\}\right]\\
            \leq&\hat{A}_G\Biggl(\widehat{\mathbb{E}}\Biggl[\exp\Biggl\{
                48p L_2 \hat{\sigma}^2\Biggl(
                    |\xi|+\rho(\theta, m)+\chi^{(m, q)}+L_1h\left(\sup_{t \in[t_0, t_0+h]} 
            \delta_\theta \bar{Y}_t^{(m, q)}\right)
                \Biggr)
            \Biggr\}\Biggr]\Biggr)^{\frac{1}{2}}\\
            &\qquad\times\left(\widehat{\mathbb{E}}\left[\exp \left\{96 p L_2 
            \hat{\sigma}^2 \zeta^{(m, q)}\right\}\right]\right)^{\frac{1}{2}}\\
            \leq&\hat{A}_G\left(\widehat{\mathbb{E}}\left[\exp\left\{96p L_2 \hat{\sigma}^2
            \left(|\xi|+\rho(\theta, m)+\chi^{(m, q)}\right)\right\}\right]\right)^{\frac{1}{2}}\\
            &\qquad\times\left(\widehat{\mathbb{E}}\left[\exp\left\{96p L_2 \hat{\sigma}^2
            (L_1h+1)\left(\sup_{t \in[t_0, t_0+h]} 
            \delta_\theta \bar{Y}_t^{(m, q)}\right)\right\}\right]\right)^{\frac{1}{2}}\\
            &\qquad\times\left(\widehat{\mathbb{E}}\left[\exp \left\{96 p L_2 
            \hat{\sigma}^2 \zeta^{(m, q)}\right\}\right]\right)^{\frac{1}{2}}\\
            \leq&\hat{A}_G\left(\widehat{\mathbb{E}}\left[\exp\left\{192p L_2 \hat{\sigma}^2
            \rho(\theta, m)\right\}\right]\right)^{\frac{1}{4}}\left(\widehat{\mathbb{E}}
            \left[\exp\left\{192p L_2 \hat{\sigma}^2
            \left(|\xi|+\chi^{(m, q)}\right)\right\}\right]\right)^{\frac{1}{4}}\\
            &\qquad\times\left(\widehat{\mathbb{E}}\left[\exp\left\{96p L_2 \hat{\sigma}^2
            (L_1h+1)\left(\sup_{t \in[t_0, t_0+h]} 
            \delta_\theta \bar{Y}_t^{(m, q)}\right)\right\}\right]\right)^{\frac{1}{2}}\\
            &\qquad\times\left(\widehat{\mathbb{E}}\left[\exp \left\{96 p L_2 
            \hat{\sigma}^2 \zeta^{(m, q)}\right\}\right]\right)^{\frac{1}{2}}.
        \end{align*}
    \end{small}Therefore, by \eqref{bar-Y-mq}, we get
    \begin{small}
        \begin{align}
            &\lim_{m\to\infty}\widehat{\mathbb{E}}\left[\exp\left\{3pL_2\tilde{\sigma}^2
            \sup_{t \in[t_0, t_0+h]}\delta_\theta \bar{y}_t^{(m, q)}
            \right\}\right] \notag\\
            \leq&\hat{A}_G \sup_m\left(\widehat{\mathbb{E}}
            \left[\exp\left\{192p L_2 \hat{\sigma}^2
            \left(|\xi|+\chi^{(m, q)}\right)\right\}\right]\right)^{\frac{1}{4}}\notag\\
            &\qquad\times \limsup_{m\to\infty}\left(\widehat{\mathbb{E}}\left[\exp\left\{96p L_2 \hat{\sigma}^2
            (L_1h+1)\left(\sup_{t \in[t_0, t_0+h]} 
            \delta_\theta \bar{Y}_t^{(m, q)}\right)\right\}\right]\right)^{\frac{1}{2}}\notag\\
            &\qquad\times\sup_m\left(\widehat{\mathbb{E}}\left[\exp \left\{96 p L_2 
            \hat{\sigma}^2 \zeta^{(m, q)}\right\}\right]\right)^{\frac{1}{2}}\notag\\
            \leq&\hat{A}_G C_{12}.\label{limit-bar-y-mq}
        \end{align}
    \end{small}

    Note that $y^{(m, q)}-y^{(m)}=(1-\theta)(\delta_{\theta}y^{(m, q)}-y^{(m)})$.  It follows that
    \begin{small}
        \begin{align*}
            &3\tilde{\sigma}^2\widehat{\mathbb{E}}\left[\sup_{t\in[t_0,t_0+h]}
                \left|y_t^{(m,q)}-y_t^{(m)}\right|\right]\\
                \leq&(1-\theta)\left(\frac{1}{L_2}\sup_{\theta\in(0,1)}\widehat{\mathbb{E}}\left[\exp\left\{3L_2\tilde{\sigma}^2
                \sup_{t \in[t_0, t_0+h]}\delta_\theta \bar{y}_t^{(m, q)}
                \right\}\right]+3\tilde{\sigma}^2\widehat{\mathbb{E}}\left[\sup_{t \in[t_0, t_0+h]}
                \left|y_t^{(m)}\right|\right]\right).  
        \end{align*}
    \end{small}Using \eqref{limit-bar-y-mq} and \eqref{estimate-y-m},
    let $m\to\infty$ and $\theta\to1$ in order, we conclude that 
    \begin{small}
        \begin{align*}
            \lim_{m\to\infty}\widehat{\mathbb{E}}\left[\sup_{t\in[t_0,t_0+h]}
            \left|y_t^{(m,q)}-y_t^{(m)}\right|\right]=0.
        \end{align*}
    \end{small}Hence, we have
        \begin{small}
            \begin{align*}
                \lim_{m\to\infty}\widehat{\mathbb{E}}\left[\sup_{t\in[t_0,t_0+h]}\left|A_{t}^{(m,q)}-A_{t}^{(m)}\right|\right]=0.
            \end{align*}
        \end{small}

    Consequently, by the convergences of $Y^{(m)}$ and $Z^{(m)}$, there are three processes 
    $(Y,Z,A)\in \mathcal{E}_G(\mathbb{R})\times \mathcal{H}_G(\mathbb{R})\times \mathcal{A}_D$ on $[t_0,t_0+h]$
    such that for any $n\geq1$
    \begin{small}
        \begin{align}\label{Y-Z-A-0}
            \lim_{m\to\infty}\widehat{\mathbb{E}}\left[\sup_{t\in[t_0,t_0+h]}
            \left|Y_t^{(m)}-Y_t\right|^n+\left(\int_{t_0}^{t_0+h}\left|Z_t^{(m)}-Z_t
            \right|^2dt\right)^n+\sup_{t\in[t_0,t_0+h]}\left|A_{t}^{(m)}-A_t\right|\right]
            =0,\ \forall n\geq1.
        \end{align}
    \end{small}

    For $t\in[t_0,t_0+h]$, set 
    \begin{small}
        \begin{align*}
            K_t=Y_t-Y_0+\int_{t_0}^{t}g(s,Y_s,Z_s)d\langle B\rangle_s-\int_{t_0}^{t}Z_sdB_s+A_t.
        \end{align*}
    \end{small}By Assumption \ref{assumption-UBT} and \eqref{Y-Z-A-0}, using the same method in the proof
    of Theorem 3.9 in \cite{HTW22}, we get 
    \begin{small}
        \begin{align*}%\label{gm-g}
            \lim_{m\to\infty}\widehat{\mathbb{E}}\left[\left(\int_{t_0}^{t_0+h}\left|g^{(m)}(t,Y_t^{(m)},Z_t^{(m)})-g(t,Y_t,Z_t)
            \right|\langle B\rangle_t\right)^n\right]=0.
        \end{align*}
    \end{small}Therefore, 
    \begin{small}
        \begin{align*}
            \lim_{m\to\infty}\widehat{\mathbb{E}}\left[\left|K_t^{(m)}-K_t\right|^n\right]=0,\ \forall n\geq1.
        \end{align*}
    \end{small}Thus $K$ is a non-increasing $G$-martingale, and then 
    $(Y,Z,K,A)\in \mathcal{E}_G(\mathbb{R})\times \mathcal{H}_G(\mathbb{R})\times \mathcal{L}_G(\mathbb{R})\times \mathcal{A}_D$ satisfies
    \eqref{DMFGBSDE-simplify} on the small time interval $[t_0,t_0+h]$.

    The proof is complete.   
\end{proof}

\renewcommand\thesection{\normalsize Appendix: Technical proofs}
\section{ }

\renewcommand\thesection{A}
\begin{proof}[Proof of equations \eqref{estimate-y-m} and \eqref{estimate-Y-m}]
Denote
\begin{small}
    \begin{align*}
        C_2(\xi)=|\xi| +\int_{t}^{t_0+h}\alpha_tdt+\frac{L_2}{2}h.
    \end{align*}
\end{small}Then, by Lemma 
    \ref{lemma-priori-estimate}, for any $p\geq 1$, we have 
    \begin{small}
        \begin{align}\label{uniform-y-m}
            \exp\left\{3pL_2\tilde{\sigma}^2\left|y_t^{(m)}\right|\right\}
            &\leq \widehat{\mathbb{E}}_t\left[\exp\left\{3pL_2\hat{\sigma}^2|\xi|
            +3pL_2\hat{\sigma}^2\int_{t}^{t_0+h}\alpha_s+\frac{L_2}{2}+L_1\left|Y_s^{(m)}\right|ds\right\}\right]\notag\\
            &\leq \widehat{\mathbb{E}}_t\left[\exp\left\{3pL_2\hat{\sigma}^2
            \left(C_2(\xi)+L_1h\sup_{s\in[t,t_0+h]}\left|Y_s^{(m)}\right|\right)\right\}\right]
            %&\leq \widehat{\mathbb{E}}_t\left[\exp\left\{3pL_2\hat{\sigma}^2
            %\left(C_2+L_1(T-t)\sup_{s\in[t,T]}\left|y_s^{(m)}+\bar{A}_{T-s}^{(m)}\right|\right)\right\}\right]
        \end{align}
    \end{small}By the representation of $Y^{(m)}$, i.e. equation \eqref{representation-Y-m}, we get 
    \begin{small}
        \begin{align*}
            \sup_{s\in[t,t_0+h]}\left|Y_s^{(m)}\right|
            &=\sup_{s\in[t,t_0+h]}\left|y_s^{(m)}+\bar{A}_{T-s}^{(m)}\right|\\
            &\leq \sup_{s\in[t,t_0+h]}\left|y_s^{(m)}\right|
            +\sup_{s\in[t,t_0+h]}\left|\bar{A}_{t_0+h-s}^{(m)}-\bar{A}_{t_0+h-s}^3\right|
            +\sup_{s\in[t,t_0+h]}\left|\bar{A}_{t_0+h-s}^3\right|,
        \end{align*}
    \end{small}where $\bar{A}^3$ is the second component of the solution to 
    the Skorokhod problem $\mathbb{SP}_{\bar{l}^3}^{\bar{r}^3}(\bar{s}^3)$ 
    and $\bar{s}^3=\widehat{\mathbb{E}}[\xi]$,  
    \begin{small}
        \begin{align*}
            &\bar{l}^3(t,x)=l^3(t_0+h-t,x)=\widehat{\mathbb{E}}\left[L(t_0+h-t,x)\right], \\
            &\bar{r}^3(t,x)=r^3(t_0+h-t,x)=\widehat{\mathbb{E}}\left[R(t_0+h-t,x)\right].
            \end{align*}
    \end{small}By Theorem \ref{thm-SP-K1-K2}, we have
    \begin{small}
        \begin{align*}
            \sup_{t\in[t,t_0+h]}\left|\bar{A}_{t_0+h-t}^{(m)}-\bar{A}_{t_0+h-t}^3\right|
            \leq \frac{C}{c}\sup_{t\in[0,h]} \left|\bar{s}_{t}^{(m)}-\bar{s}_{t}^3\right|
            +\frac{1}{c}\left(\bar{L}^3_T\vee \bar{R}^3_T\right),
        \end{align*}
    \end{small}where $\bar{L}^3_h=\sup\nolimits_{(t,x)\in[0,h]\times\mathbb{R}}\left|\bar{l}^{(m)}(t,x)-\bar{l}^{3}(t,x)\right|$ and
    $\bar{R}^3_h=\sup\nolimits_{(t,x)\in[0,h]\times\mathbb{R}}\left|\bar{r}^{(m)}(t,x)-\bar{r}^{3}(t,x)\right|$.
    
    By \eqref{bar-slr-m}, we get
    \begin{small}
        \begin{align*}
            \sup_{t\in[0,h]} \left|\bar{s}_{t}^{(m)}-\bar{s}_{t}^3\right|\leq 
            \sup_{t\in[0,h]}\left|\widehat{\mathbb{E}}[y^{(m)}_{t_0+h-t}]-\widehat{\mathbb{E}}[\xi]\right|
            \leq \sup_{t\in[t_0,t_0+h]}\left|\widehat{\mathbb{E}}[y^{(m)}_{t}]\right|+\widehat{\mathbb{E}}[\xi],
        \end{align*}
    \end{small}and 
    \begin{small}
        \begin{align*}
            \bar{L}_T^3&=\sup_{(t,x)\in[0,h]\times\mathbb{R}}
            \left|\bar{l}^{(m)}(t,x)-\bar{l}^{3}(t,x)\right|\\
              &=\sup_{(t,x)\in[0,h]\times\mathbb{R}} 
              \left|\widehat{\mathbb{E}}\left[L(t_0+h-t,y^{(m)}_{t_0+h-t}-
              \widehat{\mathbb{E}}[y^{(m)}_{t_0+h-t}]+x)\right]
              -\widehat{\mathbb{E}}\left[L(t_0+h-t,x)\right]\right|\\
             &\leq 2C\sup_{t\in [0,h]}\widehat{\mathbb{E}}[|y_{t_0+h-t}^{(m)}|]=
             2C\sup_{t\in [t_0,t_0+h]}\widehat{\mathbb{E}}[|y^{(m)}_{t}|].
        \end{align*}
    \end{small}Similar analysis holds for $\bar{R}_T^3$. Then, we derive that 
    \begin{small}
        \begin{equation}\label{estimate-bar-A}
            \sup_{t\in[t_0,t_0+h]}\left|\bar{A}^{(m)}_{t_0+h-t}-\bar{A}^{3}_{t_0+h-t}\right|
            \leq 3\frac{C}{c}\sup_{t\in [t_0,t_0+h]}\widehat{\mathbb{E}}[|y^{(m)}_{t}|]
            +\frac{C}{c}\widehat{\mathbb{E}}\left[\left|\xi\right|\right].    
        \end{equation}
    \end{small} 
    
    Hence, we obtain that 
    \begin{small}
        \begin{equation}\label{sup-Y-m}
            \sup_{t\in[t_0,t_0+h]}\left|Y_t^{(m)}\right|
            \leq\sup_{t\in[t_0, t_0+h]}|y^{(m)}_t|+
            3\frac{C}{c}\sup_{t\in[t_0, t_0+h]}\widehat{\mathbb{E}}[|y^{(m)}_t|]+C_3(\xi),
        \end{equation} 
    \end{small}where $C_3(\xi)=\frac{C}{c}\widehat{\mathbb{E}}\left[\left|\xi\right|\right]+\sup_{t\in[0,T]}\left|\bar{A}_t^3\right|.$ 
    Substituting \eqref{sup-Y-m} into \eqref{uniform-y-m},
    we have 
    \begin{small}
        \begin{align*}
            \exp\left\{3pL_2\tilde{\sigma}^2\left|y_t^{(m)}\right|\right\}&\leq
            \widehat{\mathbb{E}}_t\Biggl[
                \exp\Biggl\{3pL_2\hat{\sigma}^2\Biggl(C_2(\xi,p)+L_1hC_3(\xi,p)\\
                &\qquad+L_1h\sup_{t\in[t_0,t_0+h]}\biggl(|y^{(m)}_t|+
                3\frac{C}{c}\widehat{\mathbb{E}}[|y^{(m)}_t|]\biggr)\Biggr)\Biggr\}\Biggr]\\
                &\leq \widehat{\mathbb{E}}_t\left[C_4(\xi,p)\exp\left\{
                    3pL_2\hat{\sigma}^2L_1h\sup_{t\in[t_0,t_0+h]}|y^{(m)}_t|\right\}\right]\\
                &\qquad\times \widehat{\mathbb{E}}\left[\exp\left\{9pL_2\hat{\sigma}^2L_1h
                \frac{C}{c}\sup_{t\in[t_0,t_0+h]}|y^{(m)}_t|\right\}\right]\\
                &\leq \left(\widehat{\mathbb{E}}_t\left[C_4(\xi,p)\right]\right)^{\frac{1}{2}}
                \left(\widehat{\mathbb{E}}_t\left[\exp\left\{6pL_2\hat{\sigma}^2L_1h
                \sup_{t\in[t_0,t_0+h]}|y^{(m)}_t|\right\}\right]\right)^{\frac{1}{2}}\\
                &\qquad\times \widehat{\mathbb{E}}\left[\exp\left\{9pL_2\hat{\sigma}^2L_1h
                \frac{C}{c}\sup_{t\in[t_0,t_0+h]}|y^{(m)}_t|\right\}\right],
        \end{align*}
    \end{small}where 
    \begin{small}
        \begin{align*}
            C_4(\xi,p)&=\exp\left\{3pL_2\hat{\sigma}^2\left(C_2(\xi)+L_1hC_3(\xi)\right)\right\}\\
            &= \exp\left\{3pL_2\hat{\sigma}^2|\xi|\right\}\exp\left\{3pL_2\hat{\sigma}^2\left(
                \int_{t}^{t_0+h}\alpha_sds+\frac{L_2}{2}h+L_1hC_3(\xi)
            \right)\right\}\\
            &:= \exp\left\{3pL_2\hat{\sigma}^2|\xi|\right\}\hat{\xi},
        \end{align*}
    \end{small}and 
    \begin{small}
        \begin{align*}
            \hat{\xi}=\exp\left\{3pL_2\hat{\sigma}^2\left(
                \int_{t}^{t_0+h}\alpha_sds+\frac{L_2}{2}h+L_1hC_3(\xi)
            \right)\right\}.
        \end{align*}
    \end{small}
    
    With H\"{o}lder's inequality under $G$-framework, Theorem \ref{Doob-inequality} and Remark \ref{Doob-exp}, we have
    \begin{small} 
        \begin{align*}
            &\widehat{\mathbb{E}}\left[\sup_{t\in[t_0,t_0+h]}\exp\left\{3pL_2\tilde{\sigma}^2\left|y_t^{(m)}\right|\right\}\right]\\
            &\leq \left(\widehat{\mathbb{E}}\left[\sup_{t\in[t_0,t_0+h]}\widehat{\mathbb{E}}_t
            \left[C_4^2(\xi,p)\right]\right]\right)^{\frac{1}{2}}
            \left(\widehat{\mathbb{E}}\left[\sup_{t\in[t_0,t_0+h]}\widehat{\mathbb{E}}_t
            \left[\exp\left\{6pL_2\hat{\sigma}^2L_1h
            \sup_{t\in[t_0,t_0+h]}|y^{(m)}_t|\right\}\right]\right]\right)^{\frac{1}{2}}\\
            &\qquad\times\widehat{\mathbb{E}}\left[\exp\left\{9pL_2\hat{\sigma}^2L_1h
            \frac{C}{c}\sup_{t\in[t_0,t_0+h]}|y^{(m)}_t|\right\}\right]\\
            &\leq \hat{A}_G\left(\widehat{\mathbb{E}}_t\left[C_4^4(\xi,p)\right]\right)^{\frac{1}{2}}
            \left(\widehat{\mathbb{E}}\left[\exp\left\{12pL_2\hat{\sigma}^2L_1h
            \sup_{t\in[t_0,t_0+h]}|y^{(m)}_t|\right\}\right]\right)^{\frac{1}{2}}\\
            &\qquad\times\widehat{\mathbb{E}}\left[\exp\left\{9pL_2\hat{\sigma}^2L_1h
            \frac{C}{c}\sup_{t\in[t_0,t_0+h]}|y^{(m)}_t|\right\}\right]\\
            &\leq \hat{A}_GC_5(\xi,p)\widehat{\mathbb{E}}\left[\exp\left\{
                \left(12p+18p\frac{C}{c}\right)L_2\hat{\sigma}^2L_1h\sup_{t\in[t_0,t_0+h]}|y^{(m)}_t|
            \right\}\right],
        \end{align*}
    \end{small}where $C_5(\xi,p)=\widehat{\mathbb{E}}[C_4^4(\xi,p)].$ Choose $h$ such that 
    $(4+6C/c)L_1h(\hat{\sigma}/\tilde{\sigma})^2<1$  
    then, for each $p\geq1,$
    \begin{small}
        \begin{align*}
            \widehat{\mathbb{E}}\left[\sup_{t\in[t_0,t_0+h]}\exp\left\{3pL_2\tilde{\sigma}^2
            \left|y_t^{(m)}\right|\right\}\right]\leq \left(\hat{A}_GC_5(\xi,p)
            \right)^{\frac{1}{1-(4+6C/c)L_1
            h(\hat{\sigma}/\tilde{\sigma})^2}}:=C_6(\xi,p)<\infty.
        \end{align*}
    \end{small}Then we obtain that  
    \begin{small}
        \begin{align*}
            \sup_{m}\widehat{\mathbb{E}}\left[\sup_{t\in[t_0,t_0+h]}
            \exp\left\{3pL_2\tilde{\sigma}^2\left|y_s^{(m)}\right|\right\}\right]&\leq
            \sup_{m}C_6(\xi,p) \leq C_7(\xi,p)<\infty
        \end{align*}
    \end{small}where $C_7(\xi,p)$ is a bounded constant with respect
    to $\xi,p$ and other constants given in the assumptions.

    Immediately, we have
    \begin{small}
        \begin{align*}%\label{estimate-Y-m}
            &\sup_{m}\widehat{\mathbb{E}}\left[\sup_{t\in[t_0,t_0+h]}
            \exp\left\{3pL_2\tilde{\sigma}^2\left|Y_t^{(m)}\right|\right\}\right]\notag\\
            &\leq \sup_{m}\widehat{\mathbb{E}}\left[\sup_{t\in[t_0,t_0+h]}
            \exp\left\{3pL_2\tilde{\sigma}^2\left(\left(1+3\frac{C}{c}\right)
            \left|y_t^{(m)}\right|+C_3(\xi)\right)\right\}\right]\\
            &\leq C_7(\xi,p)^{1+3\frac{C}{c}}\exp
            \left\{3pL_2\tilde{\sigma}^2C_3(\xi)\right\}<\infty.\notag
        \end{align*}
    \end{small}By the relationship of \eqref{Y-y-relationship}, we have $Z^{(m)}=z^{(m)},K^{(m)}=k^{(m)}$,
    according to the Lemma 3.7 in \cite{HTW22}, for any $p\geq1$, we obtain 
    \begin{small}
        \begin{align*}
            \sup_m \widehat{\mathbb{E}}\left[\left(\int_{t_0}^{t_0+h}\left|Z_t^{(m)}\right|^2dt\right)^n
            +\left|K_{t_0+h}^{(m)}\right|^n\right]< \infty, \ \forall n\geq1.
        \end{align*}
    \end{small}Moreover, similar to \eqref{estimate-bar-A}, we have
    \begin{small}
        \begin{align*}
            \sup_{m} \left|A_{t_0+h}^{(m)}\right|&=\sup_{m}\left|\bar{A}_{h}^{(m)}-\bar{A}_0^{(m)}\right|=
            \sup_{m}\left|\bar{A}_h^{(m)}-\bar{A}_h^3+\bar{A}_h^3\right|\\
            &\leq 3\frac{C}{c}\sup_{m}\widehat{\mathbb{E}}\left[\sup_{t\in [t_0,t_0+h]}|y^{(m)}_{t}|\right]
            +\frac{C}{c}\widehat{\mathbb{E}}\left[\left|\xi\right|\right]
            +\sup_{t\in [0,T]}\left|\bar{A}_t^3\right|< \infty.
        \end{align*}
    \end{small}
    
    The proof is complete.
\end{proof}
%\section*{Acknowledgement}
%We would like to thank Prof. Shige Peng for many helpful discussions.

\section*{Declarations}
We declare that there is no conflict of interest, and 
this manuscript is original, unpublished, and not under review elsewhere.

\end{document}